\documentclass[11pt, reqno]{amsart}
\usepackage[title]{appendix}
\usepackage{multirow}

\usepackage{euscript,amscd,amsgen,amsfonts,amssymb}

\newcommand{\eqdef}{\stackrel{\scriptscriptstyle\rm def}{=}}
\usepackage{bbm}
\usepackage{pgfplots, multicol}
\usepackage{mathtools}
\usepackage{paralist}
\usepackage{todonotes}
\usepackage{enumerate}
\usepackage{tikz-cd}
\usepackage[pdf]{pstricks}
\usepackage{ebezier}

\pgfplotsset{compat=1.12}

\newtheorem{theorem}{Theorem}

\newtheorem{proposition}{Proposition}
\newtheorem{question}{Question}
\newtheorem{corollary}{Corollary}
\newtheorem{definition}{Definition}
\newtheorem{lemma}{Lemma}
\newtheorem{remark}{Remark}

\newtheorem{example}{Example}

\newcommand{\beha}{\begin{enumerate}}
\newcommand{\behe}{\end{enumerate}}
\renewcommand{\epsilon}{\varepsilon}

\newcommand{\R}{{\rm Rot}}
\newcommand{\Per}{{\rm Per}}
\newcommand{\EPer}{{\rm EPer}}
\newcommand{\Ec}{{{\mathcal E}_{cc}}}

\newcommand{\Or}{\mathcal{O}}

\newcommand{\cU}{\mathcal{U}}

\newcommand{\cM}{\mathcal{M}}

\newcommand{\cH}{\mathcal{H}}
\newcommand{\bR}{{\mathbb R}}

\newcommand{\bZ}{{\mathbb Z}}
\newcommand{\bN}{{\mathbb N}}

\newcommand{\cA}{{\mathcal A}}
\newcommand{\cB}{{\mathcal B}}
\newcommand{\cC}{{\mathcal C}}

\def\1{1\!\!1}

\def\and{\text{ and }}

        \def\conv{\text{{\rm conv}}}

           \def\htop{{\text h_{\rm{top}}}}

\usepackage{pst-fill}

                        \def\^{\tilde}

\def\Per{{\rm Per}}
\def\inn{{\rm int}}
\def\ri{{\rm ri\,}}

\def\var{{\rm var}}

\def\Per{{\rm Per}}
\def\EPer{{\rm EPer}}

\def\1{1\!\!1}

\def\rv{{\rm rv}}
\def\inn{{\rm int\ }}

\newtheorem*{thmA}{Theorem A}

\newtheorem*{thmB}{Theorem B}

\DeclareMathSymbol{\varnothing}{\mathord}{AMSb}{"3F}
\renewcommand{\emptyset}{\varnothing}
\title[]{A topological classification of locally constant potentials  via zero-temperature measures}

\author{Christian Wolf}\address{Department of Mathematics, The City College of New York, New York, NY, 10031, USA\\
 The Graduate Center, CUNY,
                 New York, NY 10016 USA}\email{cwolf@ccny.cuny.edu}

\author{Yun Yang}\address{Mathematics Department
                 The Graduate Center, CUNY,
                 New York, NY 10016 USA }\email{yyang@gc.cuny.edu}

\thanks{We thank the anonymous referee for many helpful suggestions, which greatly improved the paper and generalized the results. We also thank Xufeng Guo for providing helpful computer simulations.}

\begin{document}
\maketitle

\begin{abstract}
We provide a topological classification of  locally constant functions over subshifts of finite type via their zero-temperature measures.  Our approach is to analyze the relationship between the distribution of the zero-temperature measures and the boundary of higher dimensional  generalized rotation sets.  We also discuss  the regularity of the  localized entropy function on the boundary of the generalized rotation sets. 
\end{abstract}
\maketitle
\section{Introduction}
\subsection{Motivation}
Recently, the following optimization problems have been posed for various systems.
 Consider a continuous map $f:X\rightarrow X$ on a compact metric space $X$ and a continuous real-valued function $\phi$ on $X$.
 \begin{question}[Equilibrium states]Which $f$-invariant measure(s) $\mu$ maximizes the topological pressure (called equilibrium state), i.e.
$$h_\mu(f)+\int \phi\, d\mu\geq h_\nu(f)+\int \phi\, d\nu$$
 for all $f$-invariant probability measures $\nu$? Here  $h_\nu(f)$ is the measure-theoretic entropy of $\nu$. We refer to \cite{CP} and the references therein for an overview and recent results on equilibrium states.
\end{question}
\begin{question}[Ergodic optimization] Which $f$-invariant measure(s) $\mu$ optimizes $\phi$ (called maximizing measure), i.e.
$$\int \phi\, d\mu\geq\int \phi\, d\nu$$
for all $f$-invariant ergodic probability measures $\nu$? We refer to \cite{Je2} for an introduction of the subject.
\end{question}
\begin{question}[Optimal orbits] The orbit of which point(s) $x_0$ yields the largest time average of $f$ (called optimal orbit), i.e.
$$\lim_{n\rightarrow \infty}\frac{1}{n}\sum_{i=0}^{n-1}\phi(f^i(x_0))\geq \lim_{n\rightarrow \infty}\frac{1}{n}\sum_{i=0}^{n-1}\phi(f^i(x))$$
for all $x\in X$, see e.g. \cite{Je2, YH}  for details.
\end{question}

The research towards answering these three questions is motivated by various problems in several areas. For example, the study of equilibrium states has its roots in statistical mechanics.  As for ergodic optimization and the classification of optimal orbits, one motivation goes back to the theory of Mather [17] and Ma\~n\'e [15, 16] on the dynamics of the Euler-Lagrange flow. 
At first glance, there is no connection between the three questions. However, Birkhoff's ergodic theorem states that
$$
\text{space average }= \int \phi\, d\mu= \text{ time average }=\lim_{n\rightarrow\infty}\frac{1}{n}\sum_{i=0}^{n-1}\phi(f^i(x)) $$
for any ergodic $f$-invariant measure $\mu$  and $\mu$-almost every point $x$. Hence, selecting optimal orbits is naturally related to finding  maximizing ergodic measures.  The following inequality builds a connection between equilibrium states and maximizing measures:
$$h_{\mu_T}(f)+\int \frac{1}{T}\phi\, d\mu_T\geq h_\nu(f)+\int \frac{1}{T}\phi\, d\nu,$$
i.e. $$Th_{\mu_T}(f)+\int \phi\, d\mu_T\geq Th_\nu(f)+\int \phi\, d\nu,$$
where $\mu_{T}$ is the equilibrium state for the potential $\frac{1}{T}\phi$. 
It follows that the limit $\lim_{T\rightarrow0}\mu_T$ (if it exists) is a particular  maximizing measure of the function $\phi$. 
 The limit $$\lim_{T\rightarrow0}\mu_T=\mu$$ is called 
the zero-temperature measure of $\phi$. Here the limit is taken in  the weak* topology.
If the limit does not exist, one considers the accumulation points of the sequence $(\mu_T)_T$   which are also maximizing measures. These measures are ground states and are of special interest in  statistical physics. We note that  here $T$ is interpreted as the temperature of the system.\footnote{We point out that in the mathematical theory of the thermodynamic formalism
it is customary to consider the inverse temperature $t=1/T$ and then to take the limit $t\to\infty$, see Section 2.5 for details. We mention that the notation that is being used for the inverse temperature in Physics is $\beta=\frac{1}{k_{B}T}$, where $k_{B}$ is Boltzmann's constant which can be taken equal to one in an approximate system of units.}

%

In this paper, we  study the zero-temperature measures for the locally constant functions over subshifts of finite type from the topological point of view.
Let $f:X\to X$  be a  subshift of finite type
and $\phi: X\rightarrow \mathbb{R}$ be a locally constant function.  
Such a function admits a unique equilibrium state $\mu_{\phi}$. We are interested in the zero temperature limit $ \mu_{\infty,\phi}$  given by 
$$\mu_{t\phi}\rightarrow \mu_{\infty,\phi}, \text{ as }t\rightarrow \infty.$$
The first natural question is whether the limit exists and if so, whether it can be described precisely or not. 
Nekhoroshev \cite{N} proved that generically, the zero-temperature limits of equilibrium states of locally constant potentials exist and are supported on  a periodic configuration.  Using analytic geometry, Br\'{e}mont \cite{Br}  proved  the convergence of $\mu_{t\phi}$, as $t\rightarrow \infty.$ Later, Leplaideur \cite{L} obtained a dynamical proof of the convergence and the limit was partially identified: If $\psi$ is H\"{o}lder continuous and $\phi$ is locally constant, then the limit of $\mu_{\psi+t \phi}$ exists, as $t\rightarrow \infty.$
Based on  different tools, namely the approximation by periodic orbits, the contraction-mapping approach to the Perron-Frobenius theorem for matrices, and the idea of a renormalization procedure, Chazottes, Gambaudo and Ugalde \cite{CGU} proved that as $t\rightarrow \infty$, the family $(\mu_{t\phi})_{t>0}$ converges  to a measure that is concentrated on a certain subshift of finite type, which is the union of transitive subshifts of finite type.   Recently, Contreras \cite{C} proved that for expanding transformations, the maximizing measures of a generic Lipschitz function are supported on a single periodic orbit. 

Other than results for convergence, there are also nonconvegence examples in the literature. 
Namely, Chazottes and Hochman \cite{CH} considered examples of the lattice $\bZ^d$ with finite alphabet for which the zero temperature limit does not exist.  In particular, for $d=1$ they derived non-existence for a Lipschitz potential as well as for $d\geq 3$ for a locally constant potential.  More recently, Coronel and Rivera-Letelier \cite{CR} provided  non-convergence examples in any dimension for Lipschitz potentials for both, finite alphabets and the unit circle as alphabet.  A non-converging example with a discontinuous function on the unit-circle alphabet over $\mathbb{Z}^d, d\geq 1$ was provided earlier by Van Enter and Ruszel \cite{VR}.


The goal in this paper is to classify the topological structure of the space of  locally constant functions over subshifts of finite type in terms of their zero-temperature measures.  Our approach is to apply the connection between the distribution of the zero-temperature measures and  the geometric properties of the generalized rotational sets (see \cite{KW4}).  In particular, we consider a higher dimensional function $\Phi$ that encodes all one-dimensional functions. We then obtain a topological classification of  the zero-temperature limits of locally constant functions in terms of geometric properties of the generalized rotation set of $\Phi$.

\subsection{Statement of the Results}
Let $f:X\to X$ be a subshift of finite type and let $LC_k(X,\mathbb{R})$ denote the space of functions that
are constant on cylinders of length $k$. Let $m_c(k)$ denote the cardinality of the set of cylinders of length $k$.
Thus, we can identify $LC_k(X,\mathbb{R})$ with $\mathbb{R}^{m_c(k)}$ which makes $LC_k(X,\mathbb{R})$ a Banach space when endowed with the standard norm. 
Let  $x\in X$ be a periodic point with period $n$. We denote by $\mu_x$ the unique invariant measure supported on the orbit of $x$ (see \eqref{perm} for details).
Moreover, we say  $x$  is $k$-elementary if the cylinders of length $k$ generated by $x,f(x),\dots,f^{n-1}(x)$ are pairwise disjoint (see Definition \ref{defkelementary}). The notion of elementary periodic orbits was first considered by Ziemian \cite{Z}.
 Our main result is a specific description of the zero-temperature measures for the functions in $LC_k(X,\mathbb{R})$. In particular, we construct a partition of $LC_k(X,\mathbb{R})$ into finitely many convex  cone components such that the zero-temperature measures of all functions in a component are  convex sums of finitely many ergodic measures. Moreover, the sets associated
with unique $k$-elementary periodic point zero-temperature measures form an open and dense subset of $LC_k(X,\mathbb{R})$. We are now able to formulate the main result of this paper:
\begin{thmA}\label{main theorem}
  Let $f:X\to X$ be a transitive subshift of finite type and let $k \in \bN$. Then there exist  a partition of $LC_k(X,\bR)$  into  finitely many convex cones $\cU_1,\ldots, \cU_{\ell_1}, \cU_{\ell_1+1},\ldots,\cU_{\ell_2},\cU_{\ell_2+1},\ldots, \cU_N$ and  finite sets of ergodic measures $\cM_1,\ldots, \cM_{\ell_1}, \cM_{\ell_1+1},\ldots,\cM_{\ell_2},\cM_{\ell_2+1},\ldots, \cM_N$ such that the following properties hold:
\begin{enumerate}
\item[{\rm (a)}] The sets $\cU_{1}, \ldots, \cU_{\ell_1}$ are open in $LC_k(X,\bR)$ and the closure of their union is $LC_k(X,\bR)$. For $1\leq i\leq \ell_1$, $\cM_i=\{\mu_{x_i}\}$, where $x_i$ is a $k$-elementary periodic point. Moreover, for any $\phi\in \cU_i$,  $\mu_{\infty,\phi}=\mu_{x_i}$,   and $\mu_{x_i}$ is the 
unique maximizing measure of $\phi$.
\item[{\rm (b)}] 
For $\ell_1<i\leq \ell_2$, $\cM_i=\{\mu_{i}\}$, where $\mu_i$ is the unique measure of maximal entropy of a non-discrete transitive subshift of finite type $X_i\subset X$. Moreover, for any $\phi\in \cU_i$, $\mu_{\infty,\phi}=\mu_i$. 

\item[{\rm (c)}]
For  $ \ell_2< i\leq N$,  $\cM_i=\{\mu^1_i, \ldots, \mu^{n_i}_i\}$, where $n_i\geq 2$ and $\mu^j_{i}$ is the unique measure of maximal entropy of some transitive subshift of finite type $X^j_i$ with the same entropy, i.e. $h_{\mu^{j_1}_i}(f)= h_{\mu^{j_2}_i}(f)$ for $1\leq j_1,j_2\leq n_i$. 
Further, for any $\phi\in \cU_i,$ 
\begin{equation}\label{messum} 
\mu_{{\infty,\phi}}=a_{1,\phi}\mu^1_i+\dots+a_{n_i,\phi}\mu^{n_i}_i,
\end{equation}
where  $0\leq a_{j,\phi}\leq 1$ and $\sum a_{j,\phi}=1$. Moreover, in each $\cU_i$ there are only finitely many choices for the coefficients $a_{j,\phi}$.
\end{enumerate}
\end{thmA}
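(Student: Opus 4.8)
The plan is to pass from the one-dimensional family $LC_k(X,\bR)$ to the higher-dimensional "master" function $\Phi: X \to \bR^{m_c(k)}$ whose coordinates are the indicator functions of the cylinders of length $k$. Every $\phi \in LC_k(X,\bR)$ is then a linear functional composed with $\Phi$, namely $\phi = \langle w, \Phi \rangle$ for some $w \in \bR^{m_c(k)}$, and an invariant measure $\nu$ maximizes $\phi$ precisely when its rotation vector $\mathrm{rv}(\nu) = \int \Phi \, d\nu$ lies on the face of the generalized rotation set $R = \{\int \Phi \, d\nu : \nu \in \cM_f(X)\}$ exposed by the direction $w$. Since $\Phi$ is locally constant, $R$ is a rational polytope (its vertices are rotation vectors of $k$-elementary periodic orbits, cf.\ Ziemian \cite{Z}), and the standard duality between $w$ and the exposed face $F_w$ of $R$ gives a partition of $\bR^{m_c(k)} \setminus \{0\}$ into the open normal cones of the vertices (this yields the $\cU_1, \dots, \cU_{\ell_1}$) together with the lower-dimensional normal cones of positive-dimensional faces (grouped to give the remaining $\cU_i$). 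This is a finite partition into convex cones, and the union of the top-dimensional ones is dense, giving the topological skeleton of the theorem; I would set this up first and record the polytope structure of $R$ as the first lemma.

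The second and main step is to identify the zero-temperature measure for $\phi$ in each cone with a convex combination of measures of maximal entropy supported on sub-SFTs determined by the face $F_w$. The key object is the subsystem $X_w \subset X$ consisting of points all of whose length-$k$ cylinder transitions lie in the combinatorial support of $F_w$ — equivalently, the maximal invariant subset on which $\phi$ attains its maximum value; this is a sub-SFT (intersection of a mixing-type condition with a subshift of finite type is still SFT). The standard large-deviations / thermodynamic argument shows that any weak* accumulation point of $\mu_{t\phi}$ as $t \to \infty$ is a maximizing measure of $\phi$ of maximal entropy among maximizing measures, i.e.\ a measure of maximal entropy of $X_w$. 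Decomposing $X_w$ into its transitive (communicating) components $X_w^1, \dots, X_w^{n_w}$, the measures of maximal entropy of $X_w$ are exactly the convex combinations of those $\mu_{X_w^j}$ whose components realize the top entropy $h(X_w) = \max_j h(X_w^j)$. This immediately produces the trichotomy: either $X_w$ is a single periodic orbit (case (a), the generic/open situation, corresponding to vertices of $R$), or a single non-discrete transitive SFT (case (b)), or several transitive SFTs of equal maximal entropy (case (c)).

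The delicate point — and where I expect the real work to be — is case (c): showing that $\mu_{t\phi}$ actually \emph{converges} (so that the coefficients $a_{j,\phi}$ are well defined) and that within each cone $\cU_i$ only finitely many coefficient vectors occur. Convergence is not automatic from the compactness argument above, since a priori the mass could oscillate among the equal-entropy components; here I would invoke the results of Chazottes--Gambaudo--Ugalde \cite{CGU} (or Br\'emont \cite{Br}/Leplaideur \cite{L}) guaranteeing convergence of $\mu_{t\phi}$ for locally constant $\phi$, and then extract the limiting weights from the subexponential (polynomial-in-$t$) corrections to the partition function $Z_n(t\phi)$: the relative weight $a_{j,\phi}$ is governed by the sub-leading asymptotics coming from the "width" of each component $X_w^j$ and the connecting paths, which for a locally constant potential are determined by finitely many linear-algebraic quantities (sizes of Jordan blocks of the associated transfer matrices, or equivalently the polynomial growth rates of admissible words). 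Because these data depend only on the combinatorial type of the face $F_w$ — hence only on $i$, not on the particular $\phi \in \cU_i$ — the coefficient vector $(a_{1,\phi}, \dots, a_{n_i,\phi})$ takes only finitely many values as $\phi$ ranges over $\cU_i$. Finally, I would assemble the pieces: relabel the cones so that the vertex-cones come first (part (a)), the simple positive-dimensional faces with a unique top-entropy component next (part (b)), and the remaining faces last (part (c)), verifying openness and density of $\bigcup_{i \le \ell_1} \cU_i$ from the fact that vertices of a polytope have full-dimensional normal cones whose union is dense.
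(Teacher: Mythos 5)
Your overall architecture is the same as the paper's: encode $LC_k(X,\bR)$ into a single master function $\Phi\in LC_k(X,\bR^{m_c(k)})$, partition the dual space into the normal cones of the faces of the polytope $\R(\Phi)$ (Ziemian), identify the accumulation points of $\mu_{t\phi}$ as entropy-maximizing measures on the face subshift $X_F$ via the ground-state/rotation-set correspondence, decompose $X_F$ into transitive components, and quote Br\'emont/Leplaideur/CGU for convergence. However, there is one genuine gap. For part (a) you assert that the exposed set at a vertex ``is a single periodic orbit,'' but this is exactly the non-trivial point. Under the indicator master potential, two distinct $k$-elementary periodic orbits that visit the same set of $k$-cylinders (i.e.\ are $k$-permutable, such as $\Or(012021)$ and $\Or(010212)$ in the full $3$-shift with $k=2$) have \emph{identical} rotation vectors, and if such a rotation vector were a vertex, the face subshift there would contain several orbits and all their admissible concatenations, destroying both the uniqueness of the maximizing measure and the density claim in (a). The paper's Proposition \ref{propkey} rules this out by a combinatorial recombination argument: from two distinct permutable orbits $x=\Or(abc)$, $y=\Or(a'b'c')$ one manufactures periodic points $\Or(bb'),\Or(ac'),\Or(a'c)$ lying off the face whose weighted average of integrals nevertheless equals the vertex value, contradicting extremality. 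Without this (or an equivalent) argument your trichotomy is unjustified at the vertices.

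A secondary flaw: your explanation for why only finitely many coefficient vectors $(a_{1,\phi},\dots,a_{n_i,\phi})$ occur in each cone --- that the sub-leading asymptotics ``depend only on the combinatorial type of the face, hence only on $i$, not on the particular $\phi\in\cU_i$'' --- is false as stated. The paper's Example \ref{example2c} exhibits three potentials in the \emph{same} cone with coefficient vectors $(1/2,1/4,1/4)$, $(1/3,1/3,1/3)$ and $(1/2,1/2,0)$; the weights genuinely depend on $\phi$ through the off-diagonal values, and only the \emph{finiteness} of the set of possible weight vectors holds, which the paper obtains by citing Br\'emont's Theorem 2.1(2) rather than by a combinatorial-invariance argument. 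Your parallel appeal to the literature would cover this, but the heuristic you offer in its place would lead you astray if you tried to make it rigorous.
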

 

We remark that in part (b)  $\cU_{l_1+1}=\{\phi \text{ cohomologous to zero}\}$. We refer to Section 2.4 for details about cohomologous functions. In particular,  $X_{l_1+1}=X$ and $\mu_{\ell_1+1}=\mu_{\rm mme}$ is the unique measure of maximal entropy of $f$. 
Further, we point out that part (c)  covers both, the case of discrete and non-discrete subshifts of finite type.  Namely, in case of zero entropy the $X^{j}_i$'s are $k$-elementary periodic orbits and the positive entropy case corresponds to non-discrete subshifts of finite type $X^{j}_i$. We notice that in part (c) the coefficients $a_{j,\phi}$  are  in general not restricted to values in $(0,1)$. This is shown in Appendix A.1,  Example \ref{example2c}  where  we consider the case when $\cM_i$  contains 3 fixed point measures and depending on the function $\phi\in\cU_i$  either
none or one of the coefficients $a_{j,\phi}$ is zero.

Since the set of locally constant functions
$LC(X,\bR)=\bigcup_{k\in\mathbb{N}}LC_k(X,\bR),$
we have the following direct corollary.

\begin{corollary}
There exist countable sets $\mathcal{M}_{\rm univ}\subset \mathcal{M}_E$,  where $\mathcal{M}_E$ is the collection of ergodic invariant measures for $f$, and $E_{\rm univ}=\{h_{\mu}(f):\mu\in \cM_{\rm univ}\}\subset [0,h_{\rm top}(f)]$ such for all $\phi\in LC(X,\bR)$  the zero-temperature measure $ \mu_{\infty,\phi}$ is a finite convex sum of measures in $\mathcal{M}_{\rm univ}$ and $h_{ \mu_{\infty,\phi}}(f)\in E_{\rm univ}$.
\end{corollary}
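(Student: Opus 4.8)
The plan is to derive the Corollary directly from Theorem A by taking a union over all cylinder lengths $k$. For each fixed $k\in\bN$, Theorem A produces a partition of $LC_k(X,\bR)$ into finitely many convex cones $\cU_1^{(k)},\ldots,\cU_{N_k}^{(k)}$ together with finite sets of ergodic measures $\cM_1^{(k)},\ldots,\cM_{N_k}^{(k)}$, and for every $\phi\in LC_k(X,\bR)$ the zero-temperature measure $\mu_{\infty,\phi}$ lies in the component whose $\cM_i^{(k)}$ it is a convex combination of. Thus the set
\[
\cM_{\rm univ}\eqdef\bigcup_{k\in\bN}\ \bigcup_{i=1}^{N_k}\cM_i^{(k)}
\]
is a countable union of finite sets, hence countable, and it is by construction a subset of $\cM_E$. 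Correspondingly set $E_{\rm univ}\eqdef\{h_\mu(f):\mu\in\cM_{\rm univ}\}$, which is countable and contained in $[0,h_{\rm top}(f)]$ since entropy is nonnegative and bounded above by the topological entropy.

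Next I would verify the asserted conclusion for an arbitrary $\phi\in LC(X,\bR)$. By definition of $LC(X,\bR)=\bigcup_{k\in\bN}LC_k(X,\bR)$ there is some $k$ with $\phi\in LC_k(X,\bR)$, and then $\phi$ belongs to exactly one cone $\cU_i^{(k)}$ in the partition given by Theorem A. Parts (a)--(c) of Theorem A tell us precisely that $\mu_{\infty,\phi}$ is a finite convex sum of the measures in $\cM_i^{(k)}$ (in cases (a) and (b) the sum has a single term, in case (c) it has $n_i$ terms with nonnegative coefficients summing to $1$). Since $\cM_i^{(k)}\subset\cM_{\rm univ}$, this exhibits $\mu_{\infty,\phi}$ as a finite convex combination of measures in $\cM_{\rm univ}$. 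Finally, in all three cases $h_{\mu_{\infty,\phi}}(f)$ equals the common entropy value of the measures appearing in that sum: in (a) it is $h_{\mu_{x_i}}(f)=0$, in (b) it is $h_{\mu_i}(f)$, and in (c) the measures $\mu_i^1,\ldots,\mu_i^{n_i}$ all have the same entropy, so the entropy of any convex combination supported on them — which by affinity of entropy on this simplex is that same common value — lies in $\{h_\mu(f):\mu\in\cM_i^{(k)}\}\subset E_{\rm univ}$. Hence $h_{\mu_{\infty,\phi}}(f)\in E_{\rm univ}$, as claimed.

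The only point requiring a small amount of care is the entropy claim $h_{\mu_{\infty,\phi}}(f)\in E_{\rm univ}$ in case (c): here $\mu_{\infty,\phi}$ is a proper convex combination, not itself one of the measures in $\cM_{\rm univ}$, so one must observe that because the ergodic components $\mu_i^1,\ldots,\mu_i^{n_i}$ share a common entropy value and measure-theoretic entropy is affine under the ergodic decomposition, the mixture inherits exactly that value. This is precisely why Theorem A~(c) was stated with the equal-entropy condition $h_{\mu_i^{j_1}}(f)=h_{\mu_i^{j_2}}(f)$, and it is the feature that makes $E_{\rm univ}$ — rather than merely its closure or convex hull — the correct receptacle. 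Everything else is a routine bookkeeping over the countable family of Banach spaces $LC_k(X,\bR)$, so I do not anticipate any genuine obstacle.
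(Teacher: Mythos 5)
Your proposal is correct and follows exactly the route the paper intends: the paper states this as a "direct corollary" of Theorem A obtained by writing $LC(X,\bR)=\bigcup_k LC_k(X,\bR)$ and taking the countable union of the finitely many measure sets $\cM_i$ over all $k$, which is precisely your construction. Your additional observation that the equal-entropy condition in Theorem A(c) together with affinity of entropy guarantees $h_{\mu_{\infty,\phi}}(f)\in E_{\rm univ}$ (rather than merely in its convex hull) is the one point the paper leaves implicit, and you handle it correctly.
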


We point out that Nekhoroshev \cite{N} considered the case of two-sided full-shifts and proved a non-explicit version of  Theorem A (a) and part of (c). In contrast, our result  is direct in the sense that we construct the convex cone components $\cU_i$ explicitly  through the direction vectors of the faces of a polyhedron rotation set associated with a  computable universal higher dimensional function (see Theorem \ref{finite measures}).
Combining this identification with the recent computability results for rotation sets  \cite{BSW}, we are able to explicitly compute the sets  $\cU_i$ and $\cM_i$. Another advantage of Theorem A is that its proof provides additional information about the relative position among the components $\cU_i$. For example, for any pair $\cU_{i_1}$ and $\cU_{i_2}$, it is possible to decide if $\cU_{i_1}$ and $\cU_{i_2}$ are connected by a path of functions that stays entirely in $\cU_{i_1}\cup \cU_{i_2}$. We also mention that our methods are geometric in nature and in particular do not make use of Ruelle-Perron-Frobenius's transfer operator theory. This  suggests that our methods could be extended to handle more general classes of systems and functions.


 As mentioned earlier, to obtain Theorem A we apply the theory of higher dimensional generalized rotation sets.
 For $\Phi\in C(X,\bR^m)$ we denote by $\R(\Phi)$ the \emph{generalized rotation set} of $\Phi$ which is the set of all $\mu$-integrals of $\Phi$, where $\mu$ runs over the  $f$-invariant probability measures  $\cM$ on $X$. We refer to Section 2.3 for references and details.
We develop versions of Theorem A for higher dimensional functions (Theorems \ref{dichotomy} and \ref{finite measures}). We then identify the zero-temperature measures $ \mu_{\infty,\phi}$ in Theorem A as certain entropy maximizing measures at the boundary of a universal rotation set. Naturally, this leads to the question how the entropy varies on the boundary of rotation sets.     

For $w\in \R(\Phi)$ we denote by 
 $\mathcal{M}_\Phi(w)=\{\mu\in \cM: \rv(\mu)=w\}$
 the \emph{rotation class} of $w$, where $\rv(\mu)=$ is the \emph{rotation vector} of $\mu$ (see \eqref{limzt}). 
Following \cite{Je, KW1}, we define the \emph{localized entropy} at $w\in \R(\Phi)$ by
\begin{equation}\label{defH}
\cH(w)=\cH_\Phi(w)\eqdef\sup\{h_\mu(f): \mu \in \cM_\Phi(w)\}.
\end{equation}
The upper semi-continuity of $\mu\mapsto h_\mu(f)$ implies that $w\mapsto \cH(w)$ is upper semi-continuous on $\R(\Phi)$. Further, since $\mu\mapsto h_\mu(f)$ is affine, $w\mapsto \cH(w)$ is concave and consequently continuous on the (relative) interior of $\R(\Phi)$.
Here the relative interior of a set is defined as the interior of the set considered as a subset of its affine hull. We note that, in general, $w\mapsto \cH(w)$ is not continuous on $\R(\Phi)$, see \cite{W}. However, if $\Phi\in LC(X,\bR^m)$ then 
by Ziemian's Theorem (see Theorem \ref{thmelemtentary} in the text),
$\R(\Phi)$ is a polyhedron and thus the  celebrated Gale-Klee-Rockafellar Theorem \cite{GKR} implies that $w\mapsto \cH(w)$ is continuous on $\R(\Phi)$.
It is shown in \cite{KW4} that for  H\"older continuous functions $\Phi$ (and therefore in particular for locally constant functions $\Phi\in LC(X,\bR^m)$) the localized entropy $w\mapsto \cH(w)$ is analytic on the interior of $\R(\Phi)$. 

Since the rotation set of $\Phi\in LC(X,\bR^m)$ is a polyhedron it follows that the faces of $\R(\Phi)$ are (lower-dimensional) polyhedra as well. One might expect that $\cH$ restricted to the \emph{relative interior} (which we abbreviate by $\ri$) of a face of $\R(\Phi)$ is also analytic. However, we are able to prove the following somewhat unexpected result (see  Proposition \ref{concenv}, Theorem \ref{thmpwanalytic} in the text, and Examples \ref{example3} and \ref{example44} in Appendix B).

\begin{thmB}\label{differentiability of entropy}
  Let  $\Phi\in LC(X, \bR^m)$ and let $F$ be a non-singleton face of $\R(\Phi)$. Then $\cH|_F$ is the concave envelop of finitely many concave functions $h_i:F_i\to \bR$  defined on sub-polyhedra $F_i\subset F$. Moreover, the functions $h_i$ are analytic on ${\rm ri}\, F_i$, but $\cH$ is in general not analytic on  $\ri F$. If $m=2$ then $\cH$ is a piecewise $C^1$-function on  $\partial \R(\Phi)$. \end{thmB}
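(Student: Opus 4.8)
The plan is to reduce the statement about $\cH|_F$ to a structural analysis of the entropy-maximizing measures over the face $F$. The key initial observation is that a face $F$ of the polyhedron $\R(\Phi)$ is itself the rotation set of a (possibly restricted) locally constant system: by Ziemian's Theorem there is a subsystem $X_F\subset X$ such that every measure with rotation vector in $\ri F$ is supported on $X_F$, and $\R(\Phi|_{X_F})=F$. However, $X_F$ need not be transitive; it decomposes into finitely many transitive components $Y_1,\dots,Y_r$, together with the ``connecting'' orbits between them. This decomposition is the source of the non-analyticity: on each $Y_j$ the localized entropy $h_j$ (defined as the localized entropy of $\Phi$ restricted to $Y_j$, on its own rotation set $F_j\subset F$) is analytic on $\ri F_j$ by the result of \cite{KW4} quoted above, but these sub-polyhedra $F_j$ overlap in $F$ in a way that forces $\cH|_F$ to equal their concave envelope rather than any single $h_j$.

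The steps I would carry out are as follows. First, invoke Ziemian's Theorem (Theorem \ref{thmelemtentary}) to realize $F$ as a polyhedron and to identify the finite set of transitive subshifts of finite type $Y_1,\dots,Y_r\subset X$ whose rotation sets $F_1,\dots,F_r$ are sub-polyhedra of $F$ with $\bigcup_j F_j$ having the same affine hull as $F$. Second, for each $j$ set $h_j\colon F_j\to\bR$ to be the localized entropy $\cH_{\Phi|_{Y_j}}$; by concavity and upper semicontinuity of measure-theoretic entropy each $h_j$ is concave, and by the analyticity result of \cite{KW4} applied to the transitive system $Y_j$, each $h_j$ is analytic on $\ri F_j$. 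Third — and this is the crux — prove the identity
$$
\cH|_F = \co\bigl\{h_1,\dots,h_r\bigr\},
$$
the concave envelope (upper concave hull) of the $h_j$. The inclusion $\cH|_F\geq h_j$ on $F_j$ is immediate since any measure on $Y_j$ is a measure on $X$; concavity of $\cH$ then gives $\geq$ for the whole concave envelope. For the reverse inequality one argues that an entropy-maximizing measure $\mu$ with $\rv(\mu)=w\in F$ can be written (using ergodic decomposition and the fact that ergodic measures with rotation vector in $F$ live on some $Y_j$, since the connecting orbits carry zero entropy and form a measure-zero set for any ergodic measure not supported on a single component) as a convex combination $\mu=\sum t_j\mu_j$ with $\mu_j$ supported on $Y_j$ and $\rv(\mu_j)\in F_j$; affinity of entropy then yields $h_\mu(f)=\sum t_j h_{\mu_j}(f)\leq \sum t_j h_j(\rv(\mu_j))$, and optimizing over such decompositions gives exactly the concave envelope. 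Fourth, for the statement that $\cH|_F$ is in general \emph{not} analytic on $\ri F$, point to the explicit computations in Examples \ref{example3} and \ref{example44}, where the concave envelope of two analytic pieces has a line of non-smoothness across the interior of $F$.

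For the final assertion, suppose $m=2$, so $\R(\Phi)\subset\bR^2$ and $\partial\R(\Phi)$ is a finite union of one-dimensional faces (edges) and vertices. On each edge $F$ the preceding analysis applies with $\dim F=1$, so each $F_j$ is a subinterval and $h_j$ is analytic, hence $C^1$, on its relative interior; the concave envelope of finitely many $C^1$ concave functions on subintervals of a line is piecewise $C^1$ (it agrees with one of the $h_j$ on each subinterval of a finite partition, with possible corners only at the finitely many junction points). At the vertices $\cH$ is continuous by the Gale–Klee–Rockafellar Theorem as quoted. Splicing these together over all edges gives that $\cH$ is piecewise $C^1$ on $\partial\R(\Phi)$.

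The main obstacle I expect is the third step: proving that an entropy-maximizing measure over $w\in\ri F$ decomposes into pieces each supported on a \emph{single} transitive component $Y_j$ with its rotation vector still lying in the sub-polyhedron $F_j$. The subtlety is that a priori an ergodic measure with rotation vector on the face could have support meeting several components through the connecting orbits; one must show such behavior is incompatible with ergodicity (an ergodic measure's support is itself transitive, hence contained in a single $Y_j$) and then that the ergodic decomposition of a maximizing $\mu$ can only charge ergodic measures with rotation vector in $F$ — this last point uses that $w$ lies in a proper face, so the rotation vectors of the ergodic components, whose convex hull is a face containing $w$, must all lie in that face. Handling the boundary-of-$F$ cases and the passage between $\ri F_j$ (where $h_j$ is analytic) and $\partial F_j$ (where only concavity and semicontinuity are available) requires care, and is presumably where the hypotheses and the appeal to Proposition \ref{concenv} do the real work.
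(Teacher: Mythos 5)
Your reduction of $\cH|_F$ to the concave envelope of the localized entropies $h_j=\cH_{\Phi,f|_{Y_j}}$ over the transitive components of $X_F$ is exactly the paper's Proposition \ref{concenv}, with the same two inequalities proved the same way (concavity for one direction; upper semicontinuity of entropy, the decomposition $\mu=\sum\lambda_j\mu_j$, and affinity of entropy for the other). One small simplification you are missing: since $X_F$ is the closure of the periodic points with rotation vector in $F$, the restriction $f|_{X_F}$ is non-wandering, so $X_F$ is \emph{exactly} the disjoint union $X_1\cup\dots\cup X_t$ of its transitive components --- there are no ``connecting orbits'' to worry about, and the decomposition of an invariant measure on $X_F$ into measures on the $X_i$ is immediate. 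The analyticity of each $h_j$ on $\ri F_j$ and the appeal to Examples \ref{example3} and \ref{example44} for non-analyticity also match the paper.

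The genuine gap is in the last step, the piecewise $C^1$ claim for $m=2$. You assert that the concave envelope of finitely many $C^1$ concave functions on subintervals ``agrees with one of the $h_j$ on each subinterval of a finite partition, with possible corners only at the finitely many junction points.'' This is not known to be true, and the paper explicitly disclaims it: the remark after Theorem \ref{thmpwanalytic} states that the authors cannot rule out the graph of $\cH$ containing \emph{infinitely many} maximal line segments, i.e.\ infinitely many alternations between following $h_1$, following $h_2$, and bridging segments, accumulating at an interior point $w_0$ of the face. Precisely because no finite partition is available, the paper proves $C^1$-regularity pointwise by a five-case analysis. The two nontrivial cases are: (i) ruling out a corner formed by two maximal line segments meeting at $w_0$ --- this uses that such a corner makes $(w_0,\cH(w_0))$ an extreme point of the hypograph, so a localized maximizing measure there can be taken ergodic, hence supported on a single $X_i$, forcing $\cH_i(w_0)=\cH(w_0)$ and then $D\cH_i(w_0)$ to equal both slopes simultaneously, a contradiction; and (ii) the infinite-alternation case, where one needs the observation that $\cH(w_0)=\cH_1(w_0)=\cH_2(w_0)$ forces $D\cH_1(w_0)=D\cH_2(w_0)$ (tangency of concave minorants) together with a squeeze on difference quotients along the alternating sequence to get differentiability of $\cH$ at $w_0$ and continuity of $D\cH$ there. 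Neither argument appears in your proposal, and without them the final assertion of Theorem B is unproved. (Your claim of ``corners only at junction points'' also silently assumes that two analytic concave graphs touching from below a common concave majorant cannot create a corner; that is true, but it is exactly the tangency observation above and needs to be stated.)
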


This paper is organized as follows. In Section 2 we review some
basic concepts and results about symbolic dynamics, zero-temperature measures,  the thermodynamic formalism and generalized rotation sets.
 Section 3 is devoted to the study of zero-temperature measures of higher dimensional functions in terms of their rotation sets. In Section 4, we present the proof of  Theorem A.  Next, we study in Section 5 the regularity of the localized entropy function at the boundary of rotation sets. Finally, we provide in Appendices A and B some examples that illustrate applications of Theorems A and B, respectively.

\section{Preliminaries}\label{sec:2}

In this section we discuss relevant background material which will be used later on. We will continue to use the notations from Section 1. We start by recalling 
some basic facts from symbolic dynamics.

\subsection{Shift maps }

Let $d\in \bN$ and let $\cA=\{0,\dots,d-1\}$ be a finite alphabet in $d$ symbols. The (one-sided) shift space $\Sigma_d$ on the alphabet $\cA$ is the set of
all sequences $x=(x_n)_{n=1}^\infty$ where $x_n\in \cA$ for all $n\in \bN$.  
The shift map $f:\Sigma_d\to \Sigma_d$ (defined by $f(x)_n=x_{n+1}$) is a continuous $d$ to $1$ map on $\Sigma_d$.
In the following, we use the symbol $X$ for any shift space including the full shift $X=\Sigma_d$.
A particular class of shift maps are subshifts of finite type.
Namely, suppose $A$ is a $d\times d$ matrix with values in $\{0,1\}$, then consider the set of sequences given by  
$X=X_A=\{x\in \Sigma_d: A_{x_n,x_{n+1}}=1\}$.  The set $X_A$ is a closed (and, therefore, compact) $f$-invariant set, and we say that $f|_{X_A}$ a {\em subshift of finite type}. By reducing the alphabet, if necessary, we always assume that $\cA$ does not contain letters that do not occur in any of the sequences in $X_A$.

Let $f:X\to X$ be a subshift.
We say $t=t_1 \cdots t_k\in \cA^k$ is a block of length $k$  and write $|t|=k$.  Given another block $s=s_1\cdots s_l\in \cA^l$ we define the concatenation of  $t$ and $s$ by $ts=t_1\cdots t_ks_1\cdots s_l \in \cA^{k+l}   $. Further, $\epsilon$  denotes the empty  block.
 Given $x\in X$, we write $\pi_k(x)=x_1\cdots x_k\in \cA^k$.
We denote the  cylinder of length $k$ generated by $t$ by $\cC_k(t)=\{x\in X: x_1=t_1,\dots, x_k=t_k\}$.  Given $x\in X$ and $k\in \bN$, we call $\cC_k(x)=\cC_k(\pi_k(x))$ the  cylinder of length $k$ generated by $x$. 
Further, we call  $\Or(t)=t_1\cdots t_kt_1\cdots t_kt_1\cdots t_k\cdots \in X$
the  periodic point with period $k$ generated by $t$. 
We denote by  $\Per_n(f)$ the set of periodic points of $f$ with prime period $n$  and by $\Per(f)$ the set of periodic points of $f$. If $n=1$ we say $x$ is a fixed point of $f$.  In the following we always assume that $n$ is the prime period of $x$. Let $x\in \Per_n(f)$.  We call $\tau_x=\pi_k(x)=x_1\cdots x_{n}$ the generating segment of  $x$, that is $x=\Or(\tau_x)$.  
For $x\in \Per_n(f)$,  the unique invariant measure supported on the orbit of $x$ is given by 
\begin{equation}\label{perm}
\mu_x=\frac{1}{n}(\delta_x+\dots +\delta_{f^{n-1}(x)}), 
\end{equation} where
 $\delta_y$ denotes the  Dirac measure on $y$. We also call $\mu_x$ the periodic point measure of $x$. Obviously, $\mu_x=\mu_{f^l(x)}$ for all $l\in\bN$. We write $\cM_{\rm Per}=\{\mu_x: x\in \Per(f)\}$ and observe that $\cM_{\rm Per}\subset \cM_E$.

\begin{definition}\label{defkelementary}Let $x\in \Per_n(f)$. 
Fix $k\in \bN$. We say $x$ is a $k$-elementary periodic point with period $n$ if $\cC_k(f^i(x))\not=\cC_k(f^j(x))$ for all $i,j=0,\dots,n-1$ with $i\not=j$. In case $k=1$ we simply say $x$ is an elementary periodic point. We denote by $\EPer_n^k(f)$ the set of all $k$-elementary periodic points with period $n$ and by $\EPer^k(f)$ the set of all $k$-elementary periodic points. 
\end{definition}
We refer to Table 1 in Appendix A.2 for a list of examples of elementary periodic points.
\begin{remark} We note that the period of a $k$-elementary periodic point is at most $m_c(k)$. In particular,  $\EPer^k(f)$ is finite.
\end{remark}
 \begin{definition}\label{defkpermut}Let $x,y$ be $k$-elementary periodic points of $f$ with period $n$. We say  $x$ and $y$ are $k$-permutable  
if $$ \{\cC_k(f^i(x)): i=0,\dots ,n-1\}= \{\cC_k(f^i(y)): i=0,\dots ,n-1\}.$$
\end{definition} Clearly, being $k$-permutable   is an equivalence relation on the set of $k$-elementary periodic points.

We note that in this paper we consider the case of one-sided
shift maps. However, all our result carry over to the case of two-sided
shift maps. For details how to make the connection between one-sided shift
maps and two-sided shift maps we refer to \cite{Je}.

\subsection{Notations from convex geometry}
Next we recall  some notions from convex geometry (see e.g. \cite{R}). For $m$-dimensional vectors $u=(u_1,...,u_m)$ and $v=(v_1,...,v_m)$ we write $u\cdot v =u_1v_1+...+u_mv_m$ for the inner product of $u$ and $v$. Let $B(v,r)$ denote the open  ball about $v\in \bR^m$ of radius $r$ with respect to the Euclidean metric. A subset $K\in\bR^m$ is \emph{convex} if for all $u, v\in K$ we have that $tu+(1-t)v\in K$ for all $t\in(0,1)$. 
A point $w\in K$ is called an \emph{extreme point} of $K$ if
$w$ is not contained in any open line segment with endpoints in $K$, i.e. if for all $u, v\in K$ with $u\not=v$ we have $w\not=tu+(1-t)v$ for all $t\in (0,1)$. 
We say a convex set $K\subset \bR^m$ is a \emph{convex cone} if  $tx\in K$ for all $x\in K$ and $t>0$.

For $K\subset \bR^m$, 
 the \emph{convex hull} of $K$, denoted by $\conv(K)$,  is the smallest convex set containing $K$. 
We will work with the standard topology on $\bR^m$.
For $K\subset\bR^m$ we denote by $\inn K$ the \emph{interior} of $K$,  by $\partial K$ the \emph{boundary} of $K$ and by $\overline{K}$ the \emph{closure} of $K$.  The \emph{relative interior} and the \emph{relative boundary} of $K$, denoted by $\ri K$ and $\partial_{\rm rel} K$, are the interior and the boundary of $K$ with respect to the topology on the affine hull of $K$, respectively.

For a non-zero vector $\alpha\in\bR^m$ and $a\in \bR$ the hyperplane $H=H_{\alpha,a}\eqdef\{u\in \bR^m: u\cdot \alpha= a\}$ is said to \emph{cut} $K$ if both open half spaces determined by $H$ contain points of $K$. Here $\alpha$ is a normal vector to $H$. We say that $H$ is a \emph{supporting hyperplane} for $K$ if its distance to $K$ is zero but it does not cut $K$.

A set $F\subset K$ is a \emph{face} of $K$ if there exists a supporting hyperplane $H$ such that $F=K\cap H.$ We say a normal vector $\alpha$ to $H$ is \emph{pointing away} from $K$ if for $w\in H$ the point $\alpha+w$ belongs to the open half space of $\bR^m\setminus H$ that does not intersect  $K$. We note that if $K$ has a non-empty interior then there exists a unique unit normal vector to $H$ that is pointing away from $K$.
A point $w\in K$ is called \emph{exposed} if $\{w\}$ is a face of $K$. We say  $K$ is a polyhedron if it is the convex hull of finitely many points in $\bR^m$. In this case the vertices
of $K$ coincide with the exposed points of $K$.

\subsection{Generalized rotation sets}
Given an $m$-dimensional continuous real-valued function (also called an $m$-dimensional potential) $\Phi=(\phi_1,\dots, \phi_m)$, we define the \emph{generalized rotation set} of $\Phi$ under the dynamics $f$ by
\begin{equation}\label{defrot}
\R(\Phi)=\text{Rot}(\Phi, f)=\left\{\rv(\mu):\, \mu\in\mathcal{M}\right\},
\end{equation}
where 
\begin{equation}\label{limzt}
\text{rv}(\mu)=\left(\int\phi_1\, d\mu,\dots, \int\phi_m\, d\mu\right)
\end{equation}
is the rotation vector of $\mu$ and $\cM=\mathcal{M}_f$ denotes the set of $f$-invariant Borel probability measures on $X$ endowed with weak* topology. We recall that this topology makes $\cM$ a compact convex metrizable topological space. 
If $m=1$ we use the  notation $\mu(\phi)=\int \phi\, d\mu$ instead of $\rv(\mu)$.
It follows from the definition that
the rotation set is a compact and convex subset of $\bR^m$. 
 We note that in general the geometry of rotation sets can be quite complicated. Indeed, it is proved in \cite{KW1} that  every compact and convex set $K\subset\bR^m$ is attained as the rotation set of some $m$-dimensional function $\Phi$. 
We point out that generalized rotation sets can be considered as  generalizations
of Poincar\'e's rotation number of an orientation preserving homeomorphism, see  \cite{GM,Je,KW1,MZ,Z} for further references and details.

\subsection{Locally constant functions.}Let $f:X\to X$ be a subshift of finite type on the alphabet $\cA=\{0,\dots,d-1\}$ with transition matrix $A$. Let  $m\in \bN$ and $\Phi\in C(X,\bR^m)$.  Given $k\in\bN$ we define $$\var_k(\Phi)=\sup\{\|\Phi(x)-\Phi(y)\|: x_1=y_1,\dots, x_k=y_k\}.$$
We say $\Phi$ is constant on cylinders of length $k$ if $\var_k(\Phi)=0$. It is easy to see that $\Phi$ is locally constant if and only if $\Phi$ is constant on cylinders of length $k$ for some $k\in\bN$. We denote by $LC_k(X,\bR^m)$ the set of all $\Phi$ that are constant on cylinders of length $k$.

Recall the definition of the rotation set $\R(\Phi)$ of $\Phi$ in Equation \eqref{defrot}.
Based on work of  Ziemian \cite{Z} and Jenkinson \cite{Je}, we provide the necessary tools for the study of zero-temperature measures and rotation sets. We start with the following elementary result.

\begin{proposition}\label{propone}
 Let $k,m\in \bN$, $\Phi\in LC_k(X,\bR^m)$, and  $d'=m_c(k)$. Then there exist a subshift $g:Y\to Y$ of finite type with alphabet $\cA'=\{0,\dots,d'-1\}$ and 
transition matrix $A'$, and a homeomorphism $h:X\to Y$ that conjugates $f$ and $g$ (i.e. $h\circ f=g\circ h$) such that the transition matrix $A'$ has at most $d$ non-zero entries in each row, and the function $\Phi'=\Phi\circ h^{-1}$  is constant on cylinders of length one. Moreover, $\R(\Phi')=\R(\Phi)$ and $\cH_{\Phi'}=\cH_\Phi$.
\end{proposition}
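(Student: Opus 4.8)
The plan is to realize the ``recoding to a one-step SFT'' construction explicitly and then check that all the claimed transfer properties are immediate from the conjugacy. First I would let $\cA' = \{\,\cC_k(t) : t \in \cA^k \text{ admissible}\,\}$ be the set of admissible words of length $k$, so that $\#\cA' = m_c(k) = d'$; identify $\cA'$ with $\{0,\dots,d'-1\}$ via some fixed bijection. Define $h : X \to Y \subset \Sigma_{d'}$ by $h(x)_n = \pi_k(f^{n-1}(x)) = x_n x_{n+1} \cdots x_{n+k-1}$, i.e.\ the $n$-th coordinate of $h(x)$ records the length-$k$ window of $x$ starting at position $n$. One checks directly that $h$ is continuous (it depends on finitely many coordinates), injective (the first coordinates $x_1,\dots,x_k$ are read off from $h(x)_1$, and $x_{n+k-1}$ is the last symbol of $h(x)_n$, so $x$ is recovered), and that $h \circ f = \sigma \circ h$ where $\sigma$ is the shift on $\Sigma_{d'}$. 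Set $Y = h(X)$, a closed shift-invariant subset, and let $g = \sigma|_Y$. The transition matrix $A'$ is then defined by $A'_{u,v} = 1$ iff $uv$ can appear as a window transition, i.e.\ iff $u = t_1\cdots t_k$, $v = s_1 \cdots s_k$ with $t_2\cdots t_k = s_1 \cdots s_{k-1}$ and $t_1\cdots t_k s_k$ admissible in $X$; in particular, from a fixed $u$ the only legal successors $v$ are determined by the choice of the single new symbol $s_k \in \cA$, subject to the transition rule of $A$, which gives at most $d$ nonzero entries per row. I would then verify that $Y = X_{A'}$, i.e.\ that the finite-type condition cut out by $A'$ exactly recovers the image $h(X)$; this is the one genuinely verification-heavy point, and I expect it to be the main (though standard) obstacle: one must show that any $A'$-admissible sequence of windows is the $h$-image of an honest point of $X$, which uses that the overlap condition forces consistency and that admissibility of each length-$(k+1)$ word $t_1\cdots t_k s_k$ in $X$ propagates along the sequence. (For $k=1$ there is nothing to do, and for general $k$ this is the classical higher-block presentation.)

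Next I would address the potential. Since $\Phi \in LC_k(X,\bR^m)$, the value $\Phi(x)$ depends only on $x_1\cdots x_k$, hence only on $h(x)_1$; therefore $\Phi' \eqdef \Phi \circ h^{-1} : Y \to \bR^m$ satisfies $\Phi'(y) = \Phi'(y')$ whenever $y_1 = y_1'$, i.e.\ $\Phi' \in LC_1(Y,\bR^m)$, which is the ``constant on cylinders of length one'' claim.

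Finally, the equalities $\R(\Phi') = \R(\Phi)$ and $\cH_{\Phi'} = \cH_\Phi$ follow from general conjugacy invariance: the map $\mu \mapsto h_*\mu$ is an affine homeomorphism from $\cM_f(X)$ onto $\cM_g(Y)$ (since $h$ is a topological conjugacy), it preserves measure-theoretic entropy ($h_{h_*\mu}(g) = h_\mu(f)$), and by the change-of-variables formula $\int \Phi' \, d(h_*\mu) = \int \Phi' \circ h \, d\mu = \int \Phi \, d\mu$, so $\rv_{\Phi'}(h_*\mu) = \rv_\Phi(\mu)$. Hence $h_*$ maps the rotation class $\cM_\Phi(w)$ bijectively onto $\cM_{\Phi'}(w)$ for every $w$, giving $\R(\Phi') = \R(\Phi)$, and taking suprema of entropy over corresponding rotation classes gives $\cH_{\Phi'}(w) = \cH_\Phi(w)$ for all $w \in \R(\Phi)$, which is the last assertion. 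I would remark that transitivity of $f$ is not needed here and that the construction is the standard higher-block (sliding-block) recoding, so the only thing requiring care is the bookkeeping showing that the recoded system is again a one-step SFT with the stated bound on the number of successors.
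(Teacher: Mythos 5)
Your proposal is correct and follows exactly the construction the paper uses: the paper declares the proof elementary, cites \cite{BSW}, and records only the definitions of $h(x)_n=\cC_k(f^{n-1}(x))$ and of $A'$, which coincide with yours. Your writeup simply supplies the standard higher-block-recoding details (injectivity of $h$, the identification $Y=X_{A'}$, the row bound, and conjugacy invariance of rotation vectors and entropy) that the paper leaves to the reference.
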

\begin{proof}
The proof is elementary and can be  for example  found in \cite{BSW}. For our exposition we only require the definitions of the subshift $g$ and the conjugating map $h$.
 Let $\{\cC_k(0),\dots,\cC_k({m_c(k)-1})\}$ denote the set of cylinders of length $k$ in $X$, which we identify with $\cA'=\{0,\dots, d'-1\}$. The transition matrix $A'$  is defined by  $a'_{i,j}=1$ if and only if there exists $x\in X$ with $\cC_k(x)=i$ and $\cC_k(f(x))=j$. 
Let $Y=Y_{\cA'}$ be the shift space in ${\cA'}^\bN$ given by the transition matrix $A'$. Furthermore, let $g:Y\to Y$ be the corresponding map for the subshift of finite type. For $x\in X$ we define $h(x)=y=(y_n)_{n=1}^\infty$ by $y_n=\cC_k(f^{n-1}(x))$. 
\end{proof}

Ziemian \cite{Z} proved that the rotation set of a function $\Phi$ that is constant on cylinders of length two is a polyhedron. This result extends  to functions that are constant on cylinders of length $k\geq 1$, see e.g. \cite{BSW, Je}. 

\begin{theorem}\label{thmelemtentary}
Let $f:X\to X$ be a transitive subshift of finite type and let $\Phi\in LC_k(X,\bR^m)$. Then $\R(\Phi)$ is a polyhedron, in particular $\R(\Phi)$ is the convex hull
of $\rv(\{\mu_x: x\in \EPer_k(f)\})$.
\end{theorem}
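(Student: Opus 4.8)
The plan is to establish directly the asserted identity
$$\R(\Phi)=\conv\bigl(\rv(\{\mu_x:x\in\EPer_k(f)\})\bigr),$$
which, since $\EPer_k(f)$ is finite by the Remark following Definition~\ref{defkelementary}, exhibits $\R(\Phi)$ as a polyhedron. One inclusion is free: for $x\in\EPer_k(f)\subset\Per(f)$ we have $\mu_x\in\cM$, hence $\rv(\mu_x)\in\R(\Phi)$, and $\R(\Phi)$ is convex. For the opposite inclusion I would first invoke Proposition~\ref{propone} to replace $(f,\Phi)$ by the conjugate pair $(g,\Phi')$ with $\Phi'\in LC_1(Y,\bR^m)$ and $\R(\Phi')=\R(\Phi)$. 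Since (by the formula $h(x)_n=\cC_k(f^{n-1}(x))$ in the proof of Proposition~\ref{propone}) the cylinders $\cC_1(g^ih(x))$ and $\cC_1(g^jh(x))$ coincide exactly when $\cC_k(f^ix)=\cC_k(f^jx)$, the conjugacy $h$ restricts to a bijection $\EPer_k(f)\to\EPer_1(g)$ with $\int\Phi'\,d(h_*\mu_x)=\int\Phi\,d\mu_x$, so it suffices to prove $\R(\Phi')\subset\conv(\rv(\{\mu_y:y\in\EPer_1(g)\}))$. Write $\cA'=\{0,\dots,d'-1\}$ for the vertex set of the graph defining $g$ and let $v_i\in\bR^m$ be the constant value of $\Phi'$ on $\cC_1(i)$, so that $\rv(\mu)=\sum_i\mu(\cC_1(i))v_i$; note that a simple cycle $i_0\to i_1\to\cdots\to i_{n-1}\to i_0$ in this graph is precisely the generating segment of a $1$-elementary periodic point $y$, with $\rv(\mu_y)=\frac1n\sum_{j=0}^{n-1}v_{i_j}$.

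The heart of the argument is the variational identity, valid for every $\alpha\in\bR^m$:
$$\sup_{\mu\in\cM}\ \alpha\cdot\rv(\mu)\ =\ \max\Bigl\{\tfrac1n{\textstyle\sum_{j=0}^{n-1}}\alpha\cdot v_{i_j}\ :\ i_0\to\cdots\to i_{n-1}\to i_0\ \text{a simple cycle}\Bigr\}.$$
The inequality ``$\geq$'' is clear, as each simple cycle gives an invariant periodic measure. For ``$\leq$'', set $\psi_\alpha(y)=\alpha\cdot v_{y_1}$; by Birkhoff's ergodic theorem, $\alpha\cdot\rv(\mu)=\lim_n\frac1n\sum_{j=1}^{n}\alpha\cdot v_{y_j}$ for $\mu$-a.e.\ $y$. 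Reading $y_1\to y_2\to\cdots\to y_n$ as a walk in the graph and successively splitting off cycles at repeated vertices, one writes this walk as a union of simple cycles together with a leftover simple path of at most $d'$ vertices; since the mean of any cycle is a convex combination of the means of the simple cycles into which it decomposes, every cycle mean is at most the maximal simple-cycle mean, while the leftover path contributes an error of size $O(1/n)$. Letting $n\to\infty$ gives ``$\leq$''.

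Granting the identity, suppose for contradiction that $w\in\R(\Phi')\setminus Q$, where $Q=\conv(\rv(\{\mu_y:y\in\EPer_1(g)\}))$ is a polytope. By the separating hyperplane theorem there is $\alpha\in\bR^m$ with $\alpha\cdot w>\max_{u\in Q}\alpha\cdot u=\max_{y\in\EPer_1(g)}\alpha\cdot\rv(\mu_y)$, and the right-hand side equals $\sup_{\mu\in\cM}\alpha\cdot\rv(\mu)$ by the variational identity; but $w=\rv(\mu)$ for some $\mu\in\cM$, so $\alpha\cdot w\leq\sup_{\mu\in\cM}\alpha\cdot\rv(\mu)$, a contradiction. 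Hence $\R(\Phi')\subset Q$, and together with the free inclusion this gives $\R(\Phi')=Q$; transporting back through $h$ yields $\R(\Phi)=\conv(\rv(\{\mu_x:x\in\EPer_k(f)\}))$, a polyhedron.

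I expect the only genuine difficulty to be the ``$\leq$'' half of the variational identity, i.e.\ the walk-decomposition bookkeeping (controlling the leftover path and reducing the maximum cycle mean to simple cycles), which is essentially the maximum-mean-cycle argument from combinatorial optimization; everything else is formal. One could instead reach polyhedrality more quickly by noting that $LC_1\subset LC_2$ and quoting Ziemian's theorem for functions constant on cylinders of length two, but pinning down the precise vertex set $\rv(\EPer_k(f))$ still seems to require a cycle argument of this type, so I would keep the self-contained version.
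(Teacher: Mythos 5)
Your argument is correct, but be aware that the paper itself contains no proof of Theorem~\ref{thmelemtentary}: it is quoted from Ziemian \cite{Z} (who treats functions constant on $2$-cylinders) together with \cite{BSW, Je} for general $k$, so the comparison is with those sources rather than with anything in the text. The route in \cite{Z, Je} is to show that the generating segment of an arbitrary periodic orbit decomposes into generating segments of $k$-elementary orbits, so that every periodic point measure has rotation vector in $\conv(\rv(\{\mu_x: x\in \EPer^k(f)\}))$, and then to pass from periodic measures to arbitrary invariant measures by an approximation/density argument. You replace that second step by a support-function comparison: the maximum-mean-cycle identity applied to Birkhoff sums of a generic point, followed by a separating hyperplane. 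The combinatorial core (peel off a simple cycle at the first repeated vertex, leaving a simple path on at most $d'$ vertices, and bound each cycle mean by the maximal simple-cycle mean) is the same as in the classical decomposition, but applying it to Birkhoff averages lets you handle general invariant measures without approximating them by periodic ones, which is a clean and self-contained alternative; your reduction of $\EPer^k(f)$ to $\EPer^1(g)$ via Proposition~\ref{propone} is also correct, since $h$ sends $k$-cylinders of $f$ bijectively to $1$-cylinders of $g$ and preserves rotation vectors. One small point to tighten: the Birkhoff limit equals $\alpha\cdot\rv(\mu)$ almost everywhere only for ergodic $\mu$; either restrict to ergodic measures (which suffices, since $\mu\mapsto\alpha\cdot\rv(\mu)$ is affine and weak* continuous, hence attains its supremum at an extreme, i.e.\ ergodic, point of $\cM$), or note that for general $\mu$ the a.e.\ limit integrates to $\alpha\cdot\rv(\mu)$ and so is at least that value at some point, to which your cycle-splitting then applies.
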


Next, we discuss the set of measures whose rotation vectors belong to a face of $\R(\Phi)$.  We follow Jenkinson \cite{Je} for our exposition.
Let $f:X\to X$ be a transitive subshift of finite type with transition matrix $A$. Moreover, let
$\Phi\in LC_k(X,\bR^m)$. By Theorem \ref{thmelemtentary}, the rotation set $\R(\Phi)$ is a polyhedron. Let $F$ be a face of $\R(\Phi)$, and let $x^1,\dots,x^\ell$ denote the $k$-elementary
periodic points whose rotation vectors lie in $F$.  Note that, in this list of $k$-elementary periodic points, points in the same orbit are considered distinct.
It follows from Theorem \ref{thmelemtentary} that $F=\conv(\rv(\mu_{x^1}),\dots,\rv(\mu_{x^\ell}))$. For each $r\in\{1,\dots,\ell\}$, let $p(r)$ be the period of $x^r$. Further, let 
$s^r=x_1^r\cdots x^r_{p(r)}$ be the generating segment of the periodic point $x^r$.  We define
\begin{equation}\label{perface}
X_F=\overline{\{x\in \Per(f): \rv(\mu_x)\in F\}}.
\end{equation}
The set $X_F$ coincides with of the set of all infinite $g$-admissible concatenations of the generating segments $s^1,\dots,s^\ell$ (see \cite{Je}). Here we say that the concatenation of $s^{r_1}$ with $s^{r_2}$ is $g$-admissible if the generating sequences of the periodic points $h(x^{r_1})$ and $h(x^{r_2})$ can be concatenated in the shift $g:Y\to Y$, where $h:X\to Y$ and $g:Y\to Y$ are as in  Proposition \ref{propone}.
The set of generating segments $s^1,\dots,s^\ell$ is an alphabet $\cB$ and so $f|_{X_F}$  
is a subshift of finite type with transition matrix $B=B(s^1, \dots,s^\ell)$ defined by the concatenation rules of $g$.  Recall, that  $f$ 
is called {\em non-wandering}, if the $f$ orbit of every point $x$ is non-wandering. It follows from \eqref{perface}, that $f|_{X_F}$ is non-wandering. 
\

Let $\mathcal{G}_B$ be Markov graph associated with the transition matrix $B$ and let 
$\mathcal{G}_1,\dots,\mathcal{G}_t$ denote the transitive components of  $\mathcal{G}_B$.
Moreover, for each $\mathcal{G}_i$, let $B_i$ denote the associated transition matrix.  Define $X_i=X_{B_i}$. Since $f|_{X_F}$ is non-wandering, it follows from the construction that 
\begin{equation}\label{sumtrans}
X_F=X_{1}\cup\dots \cup X_{t}.
\end{equation}
Observe that
$f|_{X_i}$ is a transitive subshift of finite type without transitions between sets other than $X_i$ (see, e.g., Corollary. 5.1.3 in \cite{Kit}). We note  that this includes the possibility of $X_i$ being a single periodic orbit.

\begin{theorem}\label{thmfaceshift}
Let $f:X\to X$ be a transitive subshift of finite type, and  let $\Phi\in LC_k(X,\bR^m)$. Let $F$ be a face of $\R(\Phi)$, and let $\mu\in \cM$. Then $\rv(\mu)\in F$ if and only if
${\rm supp}\, \mu\subset X_F$.
\end{theorem}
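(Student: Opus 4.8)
The plan is to prove Theorem~\ref{thmfaceshift} by establishing the two implications separately, relying on Theorem~\ref{thmelemtentary}, the decomposition \eqref{perface}--\eqref{sumtrans}, and basic facts about supporting hyperplanes and ergodic decomposition.

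\medskip

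\textbf{The easy direction ($\mathrm{supp}\,\mu\subset X_F \Rightarrow \rv(\mu)\in F$).} Let $H=H_{\alpha,a}$ be a supporting hyperplane of $\R(\Phi)$ with $F=\R(\Phi)\cap H$, chosen so that $\R(\Phi)$ lies in the closed half-space $\{u:u\cdot\alpha\le a\}$. First I would observe that $X_F$ is an $f$-invariant closed set, so $\mu(X_F)=1$ makes sense, and that by \eqref{perface} the periodic measures $\mu_x$ with $\rv(\mu_x)\in F$ are dense (in the weak* sense, via orbit closures) in the invariant measures supported on $X_F$; more directly, since $X_F$ is the SFT on the generating segments $s^1,\dots,s^\ell$ all of whose periodic orbits have rotation vector in the face $F$, and since $\rv(\mu_{x^r})\cdot\alpha=a$ for each $r$ while $\Phi$ is constant on $k$-cylinders, the Birkhoff average of $\alpha\cdot\Phi$ along any point of $X_F$ equals $a$ (each generating segment contributes exactly its length times $a$ to the ergodic sum of $\alpha\cdot\Phi$). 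Hence $\int \alpha\cdot\Phi\,d\mu = a$ for every $f$-invariant $\mu$ with $\mathrm{supp}\,\mu\subset X_F$, i.e. $\rv(\mu)\cdot\alpha=a$, so $\rv(\mu)\in \R(\Phi)\cap H=F$.

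\medskip

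\textbf{The harder direction ($\rv(\mu)\in F \Rightarrow \mathrm{supp}\,\mu\subset X_F$).} Keep the supporting hyperplane $H_{\alpha,a}$ with $F=\R(\Phi)\cap H$. If $\rv(\mu)\in F$ then $\int(\alpha\cdot\Phi)\,d\mu=a=\max_{\nu\in\cM}\int(\alpha\cdot\Phi)\,d\nu$, so $\mu$ is a maximizing measure for the locally constant potential $\psi\eqdef\alpha\cdot\Phi\in LC_k(X,\bR)$. By ergodic decomposition it suffices to show that every ergodic $\nu$ appearing in the decomposition of $\mu$ has $\mathrm{supp}\,\nu\subset X_F$, and such $\nu$ is itself $\psi$-maximizing. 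So I reduce to: \emph{an ergodic $\psi$-maximizing measure is supported in $X_F$.} For an ergodic $\nu$, pick a $\nu$-generic point $z$; then $\frac1n\sum_{i=0}^{n-1}\psi(f^iz)\to a$. The key combinatorial step is that because $\psi$ is constant on $k$-cylinders, the orbit of $z$ can only spend positive frequency in $k$-cylinders $i$ for which there is a periodic orbit through $i$ whose average of $\psi$ equals $a$ — otherwise one could splice the generic orbit to overshoot or undershoot $a$, contradicting maximality/being the max. More precisely I would argue via the conjugate system $g:Y\to Y$ of Proposition~\ref{propone}: on $Y$ the potential $\psi'$ is constant on $1$-cylinders, i.e. a function of the current symbol, and a maximizing measure for such a potential on an SFT is supported on the union of those transitive components of the subgraph spanned by symbols achieving the maximal cycle-average — which is exactly $h(X_F)$ by the description of $X_F$ as $g$-admissible concatenations of the $s^r$ and the decomposition \eqref{sumtrans}. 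Pulling back by $h^{-1}$ gives $\mathrm{supp}\,\nu\subset X_F$.

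\medskip

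\textbf{Main obstacle.} The crux is the claim that an ergodic maximizing measure for a potential depending only on the current symbol (on an SFT) is supported on the union of the transitive components through the ``optimal'' symbols, and that this union coincides with $X_F$ (equivalently $h(X_F)$). The containment $\subseteq$ requires showing no invariant measure can have positive mass on a symbol lying only on cycles of strictly sub-maximal average, which is a short graph-theoretic/Birkhoff argument: decompose the orbit into returns to a recurrent symbol and average. The containment $\supseteq$ uses that each $X_i$ in \eqref{sumtrans} carries its measure of maximal entropy (or at least its periodic measures), all of which have rotation vector in $F$ hence are $\psi$-maximizing, so their supports must be inside the support-union — but this direction is not even needed for the stated equivalence; only $\subseteq$ matters. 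I would also be slightly careful that $F$ may be a proper (lower-dimensional) face, so the relevant $\alpha$ need not be unique and $H$ is merely supporting, not ``cutting'' — but none of the argument above uses uniqueness of $\alpha$, only that $a=\max_{\cM}\int\alpha\cdot\Phi$ and $F=\R(\Phi)\cap H_{\alpha,a}$, which is exactly the definition of a face together with compactness of $\cM$ and linearity of $\nu\mapsto\rv(\nu)$. Finally, I would note that $X_F$ being non-wandering (already observed before the theorem) together with \eqref{sumtrans} is what licenses passing freely between ``supported on $X_F$'' and ``supported on one of the transitive pieces $X_i$'' in the ergodic decomposition step.
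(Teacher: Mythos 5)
Your forward direction and the reduction of the hard direction to ergodic maximizing measures are fine (the paper itself only cites Jenkinson's result for $k=2$ and recodes via Proposition \ref{propone}, so you are attempting a genuinely self-contained argument). The gap is in the crux of the hard direction: you claim an ergodic maximizing measure for a one-symbol potential is supported on ``the union of those transitive components of the subgraph spanned by symbols achieving the maximal cycle-average --- which is exactly $h(X_F)$.'' The vertex-induced subgraph on the ``good'' symbols is in general \emph{strictly larger} than $h(X_F)$, because a cycle all of whose vertices lie on some maximal cycle need not itself be maximal. Concretely, take $k=1$, let $X$ be the SFT on $\{1,2,3\}$ with transitions $i\to j$ for all $i\neq j$, and let $\phi(1)=2$, $\phi(2)=\phi(3)=0$. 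The maximal cycle average is $a=1$, attained by the cycles $12$ and $13$, so every symbol lies on a maximal cycle and the subgraph spanned by the good symbols is all of $X$ (which is transitive, so the transitive-component filter removes nothing). Yet for the face $F=\{1\}$ the set $X_F$ is the proper sub-SFT of sequences alternating between $1$ and $\{2,3\}$: the edge $2\to 3$ lies only on the sub-maximal cycles $23$ and $123$ and must be avoided. Your fallback in the ``Main obstacle'' paragraph --- showing no invariant measure charges a symbol lying only on sub-maximal cycles --- is vacuous in this example and cannot yield the conclusion. (You are also conflating two graphs: the transitive components in \eqref{sumtrans} are components of the SFT whose alphabet is the set of generating segments $s^1,\dots,s^\ell$, not of the subgraph of $Y$ induced by a set of symbols.)

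What is actually needed is the stronger statement that a maximizing measure gives zero mass to every \emph{transition} (edge, or more generally every finite word) that does not occur in a maximal-average cycle, together with the non-obvious identification of the resulting edge-defined sub-SFT with $X_F$; the latter requires a recombination argument in the spirit of the proof of Proposition \ref{propkey} (two maximal cycles sharing a vertex can only recombine into cycles that are again maximal, since the symbol sums add), or alternatively a Ma\~n\'e--Conze--Guivarc'h sub-action $u$ with $\psi'+u-u\circ g\leq a$ and equality exactly on $h(X_F)$. These two steps are precisely the content of the result of \cite{Je} that the paper invokes; your outline replaces them with a vertex-level statement that is too weak to close the argument. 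The easy direction and the ergodic-decomposition reduction can stand as written.
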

\begin{proof}
For  $k=2$ the assertion is proven in \cite{Je}. The general case can be easily deduced from the case $k=2$, Proposition \ref{propone}, and using the fact 
that $LC_1(X,\bR^m)\subset LC_2(X,\bR^m)$.
\end{proof}

\subsection{Equilibrium states, ground states and zero-temperature measures.}
Let $f:X\to X$ be a transitive subshift of finite type.
Given a continuous  function $\phi:X\to \bR $, we denote the \emph{topological pressure} of $\phi$ (with respect to $f$) by $P_{\rm top}(\phi)$ and the \emph{topological entropy} of $f$ by $h_{\rm top}(f)$ (see~\cite{Wal:81} for the definition and further details). \footnote{We note that in the respective literature of the mathematical thermodynamic formalism  the function $\phi$ is often referred  to as a potential.}

The topological pressure satisfies the well-known variational principle, namely,

\begin{equation}\label{eqvarpri}
P_{\rm top}(\phi)=
\sup_{\mu\in \cM} \left(h_\mu(f)+\int_X \phi\,d\mu\right).
\end{equation}
A measure
$\mu\in \cM$ that attains the supremum in \eqref{eqvarpri} is
called an \emph{equilibrium state} (also called equilibrium measure) of the function $\phi$. We denote by $ES(\phi)$ the set of all equilibrium states of $\phi$. 
Since $\mu\mapsto h_\mu(f)$
is upper semi-continuous we have $ES(\phi)\not=\emptyset$, in particular $ES(\phi)$ contains at least one ergodic equilibrium state.  Moreover, if $\phi$ is H\"older continuous
(and in particular if $\phi$ is locally constant) then $\phi$ has a unique equilibrium state that we denote by $\mu_\phi$. We recall that two H\"older continuous functions $\phi,\psi:X\to \bR$ are said to be cohomologous  if there exists a continuous function $\eta: X\to \bR$ and $K\in \bR$ such that $\phi-\psi=\eta\circ f-\eta +K$.  By Livschitz Theorem (e.g. \cite[Proposition 4.5]{Bo}), $\phi$ and $\psi$ are cohomologous if and only if $\mu_\phi=\mu_\psi$ if and only if there exists $K\in \bR$ such that
\begin{equation}\label{eqLiv}
\frac{1}{n}\sum_{k=0}^{n-1} \left(\phi(f^k(x))-\psi(f^k(x))\right) =K
\end{equation}
 for all $x\in \Per_n(f)$ and all $n\in\bN$. 
In particular, for $\phi,\psi\in LC_k(X,\mathbb{R})$ to be cohomologous, it is sufficient that $\eqref{eqLiv}$ holds for all  $x\in\EPer^k(f)$. This  follows from the fact that every periodic orbit can be written as a concatenation of generating segments of $k$-elementary periodic points.

Next we give the definition for ground states.
    We say $\mu\in \cM$ is a \emph{ground state} of the
function $\phi$ if there exists a sequence $(t_n)_n\subset \bR$ with $t_n\to \infty$, and  corresponding  equilibrium states  $\mu_{t_n\phi}\in ES(t_n\phi)$ such that $\mu_{t_n\phi}\to \mu$ as $n\to\infty$. Here we think of $t$ as the inverse temperature $1/ T$ of the system. Thus, a ground state is an accumulation point of equilibrium states when the temperature approaches zero.
We denote by $GS(\phi)$ the set of all ground states of $\phi$.

In order to define zero-temperature measures we  require convergence of the measures $\mu_{t\phi}$ rather than only convergence of a subsequence.
Namely, suppose there exists $t_0\geq 0$ such that  for all $t\geq t_0$ the function $t\phi$ has a unique equilibrium state. We say  $ \mu_{\infty,\phi}\in \cM$ is the \emph{zero-temperature measure} of the function $\phi$ if $ \mu_{\infty,\phi}$ is the weak$^\ast$  limit of the measures $\mu_{t\phi}$ as $t\to\infty$.
For $\Phi\in C(X,\bR^m)$ and $\alpha=(\alpha_1,\dots,\alpha_m)\in \bR^m$ we write $\alpha\cdot\Phi=\alpha_1\Phi_1+\dots+ \alpha_m\Phi_m$. If the limit
\begin{equation}
\mu_{\infty,\alpha\cdot \Phi}=\lim_{t\rightarrow\infty}\mu_{t\alpha\cdot \Phi}
\end{equation}
exists, we call $\mu_{\infty,\alpha\cdot \Phi}$
the zero temperature measure in direction $\alpha$.


\section{Classification of higher dimensional functions}

The strategy to prove  Theorem A is to construct a suitable higher dimensional function $\Phi$ that encodes all one-dimensional functions. With this goal in mind we provide in this section a classification of higher dimensional functions  in terms of the ``shape" of their generalized rotation sets. 
This classification will make use of  a result in \cite{KW4} that connects the geometry of the rotation set with the rotation vectors of the corresponding ground states. While this result is originally stated for continuous maps on compact metric spaces, here we only consider the case of subshifts of finite type.

We need the following notation.
Let $f:X\to X$ be a subshift of finite type, and let  $\Phi\in C(X,\bR^m)$ be a fixed $m$-dimensional function.
Given a direction vector $\alpha\in S^{m-1}$ we denote by $H_\alpha(\Phi)$ the supporting hyperplane of $\R(\Phi)$ for which $\alpha$ is the normal vector that points away from $\R(\Phi)$. Since $\R(\Phi)$ is a compact convex set, it follows from standard arguments in convex geometry that $H_\alpha(\Phi)$ is well-defined.
We denote by $F_\alpha(\Phi)\eqdef\R(\Phi)\cap H_\alpha(\Phi)$  the face of $\R(\Phi)$ associated with the direction vector $\alpha$. For any face $F$   of $\R(\Phi)$ we denote by $\alpha(F)=\{\alpha\in S^{m-1}: F_\alpha(\Phi)=F\}$  the set of direction vectors that generate $F$. Clearly, $\alpha(F)\not=\emptyset$ and $\alpha(F_i)\cap \alpha(F_j)=\emptyset$ if $F_i\not= F_j$.

\begin{theorem}[\cite{KW4}] \label{KW}Let $f:X\to X$ be a transitive subshift of finite type, let $\Phi\in C(X, \mathbb{R}^m)$  and  let $\alpha\in S^{m-1}$. Then

\begin{enumerate}
\item[(a)] If $\mu$ is a ground state of $\alpha\cdot \Phi$ then $\rv(\mu)\in F_{\alpha}(\Phi)$, and 
\begin{equation}
h_{\mu}(f)=\sup\{h_{\nu}(f):\rv(\nu)\in F_{\alpha}(\Phi)\}.
\end{equation}

\item[(b)] The set $\{\rv(\mu):\mu\in \text{GS}(\alpha\cdot \Phi)\}$ is compact. Further, if $\Phi$ is H\"older continuous then $\{\rv(\mu): \mu\in \text{GS}(\alpha\cdot \Phi)\}$ is connected. 

\end{enumerate}


\end{theorem}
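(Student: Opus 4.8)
The plan is to reduce everything to the one-dimensional maximizing problem for the scalar potential $\psi\eqdef\alpha\cdot\Phi$ and to exploit the variational inequality together with the upper semi-continuity of entropy. First I would record the elementary identity $\rv(\mu)\cdot\alpha=\int\psi\,d\mu=\mu(\psi)$, so that maximizing the linear functional $w\mapsto w\cdot\alpha$ over $\R(\Phi)$ is the same as maximizing $\mu(\psi)$ over $\cM$. Writing $\beta\eqdef\max_{w\in\R(\Phi)}w\cdot\alpha=\max_{\mu\in\cM}\mu(\psi)$, the supporting hyperplane is $H_\alpha(\Phi)=\{w:w\cdot\alpha=\beta\}$, and hence $\rv(\mu)\in F_\alpha(\Phi)$ if and only if $\mu$ is a $\psi$-maximizing measure. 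Consequently the quantity on the right-hand side of the claimed identity equals $S\eqdef\sup\{h_\nu(f):\nu(\psi)=\beta\}$.

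For part (a), let $\mu$ be a ground state, so $\mu_{t_n\psi}\to\mu$ weak$^\ast$ for some $t_n\to\infty$ with $\mu_{t_n\psi}\in ES(t_n\psi)$. Fixing any $\psi$-maximizing measure $\nu$ and testing the variational inequality $P_{\rm top}(t_n\psi)=h_{\mu_{t_n\psi}}(f)+t_n\mu_{t_n\psi}(\psi)\ge h_\nu(f)+t_n\nu(\psi)=h_\nu(f)+t_n\beta$ gives, after rearranging, $\mu_{t_n\psi}(\psi)\ge\beta-h_{\rm top}(f)/t_n$; since also $\mu_{t_n\psi}(\psi)\le\beta$, letting $n\to\infty$ and using weak$^\ast$ convergence of $\mu_{t_n\psi}$ against the continuous function $\psi$ yields $\mu(\psi)=\beta$, i.e. $\rv(\mu)\in F_\alpha(\Phi)$ and in particular $h_\mu(f)\le S$. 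The same inequality rewritten as $h_{\mu_{t_n\psi}}(f)\ge h_\nu(f)+t_n(\beta-\mu_{t_n\psi}(\psi))\ge h_\nu(f)$ shows $h_{\mu_{t_n\psi}}(f)\ge h_\nu(f)$ for every $n$; upper semi-continuity of $\mu\mapsto h_\mu(f)$ then gives $h_\mu(f)\ge\limsup_n h_{\mu_{t_n\psi}}(f)\ge h_\nu(f)$. Taking the supremum over all $\psi$-maximizing $\nu$ yields $h_\mu(f)\ge S$, and with the reverse bound this proves $h_\mu(f)=S$, as required.

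For the compactness assertion in part (b) I would argue that $GS(\psi)$ is closed in $\cM$. Writing $GS(\psi)=\bigcap_{T\ge 0}\overline{\{\mu_{t\psi}:t\ge T,\ \mu_{t\psi}\in ES(t\psi)\}}$ exhibits it as a nested intersection of closed sets (the two descriptions agree by a routine diagonal argument using metrizability of the weak$^\ast$ topology on $\cM$). Since $\cM$ is compact, $GS(\psi)$ is compact, and because $\rv:\cM\to\bR^m$ is continuous, its image $\{\rv(\mu):\mu\in GS(\psi)\}$ is compact.

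For connectedness when $\Phi$ (hence $\psi$) is H\"older, uniqueness of the equilibrium state makes $\gamma(t)\eqdef\mu_{t\psi}$ a single-valued curve on $[0,\infty)$. The key point is its continuity, which I would obtain without transfer operators from the closed-graph property of the equilibrium correspondence: if $t_n\to t_0$ and $\mu_{t_n\psi}\to\mu^\ast$ along a subsequence, then Lipschitz continuity of $P_{\rm top}$ and continuity of $\mu\mapsto\mu(\psi)$ give $h_{\mu_{t_n\psi}}(f)=P_{\rm top}(t_n\psi)-t_n\mu_{t_n\psi}(\psi)\to P_{\rm top}(t_0\psi)-t_0\mu^\ast(\psi)$, whence upper semi-continuity of entropy forces $h_{\mu^\ast}(f)+t_0\mu^\ast(\psi)\ge P_{\rm top}(t_0\psi)$, i.e. $\mu^\ast\in ES(t_0\psi)=\{\mu_{t_0\psi}\}$; since every subsequential limit equals $\mu_{t_0\psi}$, continuity follows. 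Then $GS(\psi)$ is exactly the $\omega$-limit set $\bigcap_{T\ge0}\overline{\gamma([T,\infty))}$, a nested intersection of the compact connected sets $\overline{\gamma([T,\infty))}$ (closures of continuous images of intervals), hence connected; applying the continuous map $\rv$ yields connectedness of $\{\rv(\mu):\mu\in GS(\psi)\}$. The main obstacle I anticipate is precisely this last paragraph: establishing continuity of the equilibrium-state curve by elementary means so that the topological $\omega$-limit argument applies; the entropy identity in (a), by contrast, is a direct two-sided use of the variational inequality.
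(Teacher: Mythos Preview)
The paper does not supply its own proof of this statement; Theorem~\ref{KW} is quoted from \cite{KW4} and used as a black box, so there is nothing in the present paper to compare against.

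Your argument is correct and entirely self-contained. Part (a) is the standard two-sided squeeze from the variational inequality combined with upper semi-continuity of entropy, and your derivation is clean. For part (b), compactness follows immediately from the description $GS(\psi)=\bigcap_{T\ge 0}\overline{\bigcup_{t\ge T} ES(t\psi)}$ in the compact metrizable space $\cM$, and your diagonal justification of this identity is fine. For connectedness, the step you flagged as the potential obstacle---continuity of $t\mapsto\mu_{t\psi}$ in the H\"older case---is in fact fully handled by the closed-graph argument you wrote: Lipschitz continuity of $P_{\rm top}$, weak$^\ast$ continuity of $\mu\mapsto\mu(\psi)$, and upper semi-continuity of entropy force every subsequential limit to be an equilibrium state of $t_0\psi$, and uniqueness then pins it down. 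After that, $GS(\psi)$ is the $\omega$-limit set of a continuous curve in a compact metric space, i.e.\ a nested intersection of compact connected sets, hence connected; pushing forward by the continuous map $\rv$ preserves this.

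In short, your proof is valid and elementary, relying only on the variational principle, upper semi-continuity of entropy, and uniqueness of equilibrium states for H\"older potentials---no transfer operator machinery is needed.
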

We recall that for a locally constant function $\Phi$,  for each $\alpha\in S^{m-1}$ there exists  a unique ground state $\mu_{\infty,\alpha\cdot \Phi}$ of $\alpha\cdot \Phi$ , i.e., the zero-temperature limit exists.
In particular, in the locally constant case the set $\{\rv(\mu):\mu\in \text{GS}(\alpha\cdot \Phi)\}$ in Theorem \ref{KW} (b) is a singleton.
We note that Theorem \ref{KW} includes  the case $\inn \R(\Phi)=\emptyset$ in which case $F_{\alpha}(\Phi)=\R(\Phi)$ for all $\alpha\in S^{m-1}$ such that $\alpha\cdot \Phi$ is cohomologous to zero. In this situation $\mu_{\infty,\alpha\cdot \Phi}=\mu_{\rm mme}$  the unique measure of maximal entropy of $f$.

We continue to use the notations from Sections 1 and 2.  Let $f:X\to X$ be a transitive one-sided  subshift of finite type (see Section 2 for details). Let $k,m\in \bN$ and let $\Phi\in LC_k(X,\bR^m)$. By  Ziemian's Theorem (Theorem \ref{thmelemtentary} in this paper),  $\R(\Phi)$ is  a polyhedron.
  Let $V_\Phi=\{w_1,\dots,w_s\}$ denote the  vertex set of $\R(\Phi)$. Clearly, $s$ depends on $\Phi$. 
Again by Ziemian's Theorem, $\R(\Phi)$ is the convex hull of the rotation vectors of $k$-elementary periodic point measures (see Section 2 for the definition and details).
It follows that each $w_j\in V_\Phi$  has at least one $k$-elementary periodic point measure in  its rotation class $\mathcal{M}_{\Phi}(w_j)$. We will make use of the following definition.

\begin{definition}\label{defUk}
 We denote by $\cU(k)=\cU(k,m)$ the set of functions $\Phi$ in $LC_k(X,\bR^m)$ with the following properties.
\begin{enumerate}
\item[(a)]
If  $w$ is a vertex of $\R(\Phi)$ and if $x,y\in \EPer^k(f)$  with $\rv(\mu_x)=\rv(\mu_y)=w$, then $x$ and $y$ are $k$-permutable.  
\item[(b)]
If $x\in \EPer^k(f)$ with $\rv(\mu_x)\in \partial_{\rm rel} \R(\Phi)$ then  $\rv(\mu_x)$ is a vertex of $\R(\Phi)$.
\end{enumerate}
\end{definition}
It follows readily from Definition \ref{defUk} that all $k$-elementary periodic points  in a vertex rotation class have the same period.

 \begin{proposition}\label{propkey}
 Let  $\Phi\in \cU(k)$ and let $w$ be a vertex  of $\R(\Phi)$. Let $X_F$ denote the subshift of finite type associated with the face $F=\{w\}$ (see Equation \eqref{perface}).
 Then there exist $n\in\bN$ and $x\in\EPer^k_n(f)$ such that $X_F=\{x,f(x),\dots,f^{n-1}(x)\}$. In particular, $\cM_\Phi(w)=\{\mu_x\}$.\end{proposition}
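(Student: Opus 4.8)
The plan is to unpack the definition of $X_F$ for the zero-dimensional face $F=\{w\}$ and show that the combinatorial constraints imposed by $\Phi \in \cU(k)$ force the associated subshift of finite type to be a single periodic orbit. First I would invoke Theorem~\ref{thmfaceshift}: a measure $\mu$ satisfies $\rv(\mu) \in F = \{w\}$ if and only if $\supp \mu \subset X_F$. Then I would recall the structural description from Section 2.4: $X_F$ is the set of all infinite $g$-admissible concatenations of the generating segments $s^1,\dots,s^\ell$ of the $k$-elementary periodic points $x^1,\dots,x^\ell$ whose rotation vectors lie in $F$. Since $F$ is a single vertex $w$, every such $x^r$ satisfies $\rv(\mu_{x^r}) = w$. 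By Definition~\ref{defUk}(a), all these points are pairwise $k$-permutable, and (as the remark after Definition~\ref{defUk} notes) they all share a common period, call it $n$; moreover each $x^r$ is $k$-elementary, so its $n$ cylinders of length $k$ are pairwise distinct.

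The heart of the argument is to show that no nontrivial concatenation of these generating segments can be admissible except the one reproducing a single orbit. Pick one of the points, say $x = x^1$, with generating segment $s^1 = x_1 \cdots x_n$ and period $n$. After passing to the conjugate system $g:Y\to Y$ via $h$ from Proposition~\ref{propone}, the point $h(x)$ is a genuine fixed-cylinder-length-one periodic point, and its orbit corresponds to a cycle in the Markov graph $\mathcal{G}_B$ visiting $n$ distinct states (the $k$-cylinders $\cC_k(x),\dots,\cC_k(f^{n-1}(x))$). Because all of $x^1,\dots,x^\ell$ are $k$-permutable, they all determine cycles through \emph{the same} set of $n$ states; the only difference between them is the starting point, i.e. a cyclic shift. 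Hence in the graph on these $n$ vertices, each vertex $\cC_k(f^i(x))$ has exactly one outgoing edge within the face (the one dictated by the orbit of $x$), because a second distinct outgoing edge would produce a $k$-elementary periodic point in the rotation class $F$ whose orbit visits a cylinder outside $\{\cC_k(f^i(x))\}$ — contradicting $k$-permutability — or else would produce, via some invariant measure on a longer admissible cycle, a periodic point with rotation vector in $F$ but not equal to $w$, which is impossible since $\{w\}$ is a face. Therefore the transition matrix $B$ restricted to $X_F$ is a single $n$-cycle, so $X_F = \{x, f(x),\dots,f^{n-1}(x)\}$, and the only invariant measure supported there is $\mu_x$, giving $\cM_\Phi(w) = \{\mu_x\}$ by Theorem~\ref{thmfaceshift}.

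I would make the ``second outgoing edge'' step precise as follows: suppose the subshift $f|_{X_F}$ were not reduced to one periodic orbit. Then, using the decomposition \eqref{sumtrans}, $X_F = X_1 \cup \dots \cup X_t$ with each $f|_{X_i}$ transitive; if any $X_i$ is not a single orbit it carries uncountably many periodic points, and in particular a $k$-elementary periodic point $z$ (elementary periodic points are dense enough to realize vertices — more carefully, one extracts from a transitive non-trivial subshift a periodic point whose orbit-cylinders are not all distinct, but its rotation vector still lies in $F$). Every periodic point in $X_F$ has rotation vector in $F = \{w\}$ by \eqref{perface} and Theorem~\ref{thmfaceshift}, so $\rv(\mu_z) = w$. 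Now either $z$ is $k$-elementary — then Definition~\ref{defUk}(a) forces $z$ to be $k$-permutable with $x$, hence visits exactly the cylinders $\{\cC_k(f^i(x))\}$ and has period $n$, so its orbit is literally the orbit of $x$ — or $z$ fails to be $k$-elementary, but then one can pass to a suitable subsegment or a power to extract a genuinely $k$-elementary periodic point in $X_F$ and rerun the argument, or directly contradict Definition~\ref{defUk}(b) via an intermediate $k$-elementary point whose rotation vector lies in $\partial_{\rm rel}\R(\Phi)$ without being a vertex. Either way every orbit in $X_F$ coincides with the orbit of $x$, so $X_F$ is that single orbit.

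The main obstacle I anticipate is the bookkeeping in the step that rules out a larger admissible cycle: one must carefully argue that a transitive subshift properly containing the orbit of $x$ inside $X_F$ would produce either a $k$-elementary periodic point violating $k$-permutability (part (a)) or one whose rotation vector sits in the relative boundary without being a vertex (part (b)). Handling the non-$k$-elementary periodic points cleanly — i.e. showing they cannot evade both conditions — is where the care is needed; the cleanest route is probably to observe that since all periodic points of $X_F$ have the single rotation vector $w$, and since a transitive subshift of finite type that is not a single orbit always contains a $k$-elementary periodic orbit visiting more than the $n$ cylinders $\cC_k(f^i(x))$ (built by concatenating two distinct generating segments in a nontrivial loop of $\mathcal{G}_B$), one gets a $k$-elementary periodic point whose orbit-cylinder set strictly contains or differs from $\{\cC_k(f^i(x))\}$ while still lying in the vertex class $\{w\}$, directly contradicting Definition~\ref{defUk}(a). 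Everything else is routine once this combinatorial dichotomy is nailed down.
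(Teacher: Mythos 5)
Your proposal correctly reduces the problem to showing that $X_F$ contains exactly one $k$-elementary periodic orbit, and correctly notes that by Definition~\ref{defUk}(a) all $k$-elementary periodic points in the rotation class of the vertex $w$ are pairwise $k$-permutable with a common period $n$. But the crucial step is missing: you repeatedly pass from ``$k$-permutable with $x$'' to ``lies on the orbit of $x$'' (``the only difference between them is the starting point, i.e.\ a cyclic shift''; ``so its orbit is literally the orbit of $x$''). That implication is false in general: two distinct periodic orbits can visit exactly the same $n$ cylinders of length $k$, each exactly once, but in different cyclic orders (compare $\Or(012)$ and $\Or(021)$ in a full shift with $k=1$). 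Ruling this out at a vertex is precisely the content of the proposition, and it is where all the work lies. Your proposed dichotomy for the ``second outgoing edge'' also fails: splicing two such cycles together produces $k$-elementary periodic points visiting \emph{proper subsets} of the $n$ cylinders, so they neither visit a cylinder outside $\{\cC_k(f^i(x))\}$ (hence no violation of $k$-permutability) nor have rotation vector in $F$ (they simply fail to lie in $X_F$, which by itself contradicts nothing). Likewise, condition (b) of Definition~\ref{defUk} gives no purchase, since these shorter cycles typically have rotation vectors in the interior of $\R(\Phi)$.

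The paper closes exactly this gap with a quantitative argument. Assuming two distinct $k$-permutable orbits $x,y$ with $\rv(\mu_x)=\rv(\mu_y)=w$, it decomposes the generating segments as $\tau_x=abc$ and $\tau_y=a'b'c'$ so that the recombinations $bb'$, $ac'$, $a'c$ generate admissible periodic points $\xi_1,\xi_2,\xi_3$, each omitting at least one of the $n$ cylinders and hence lying outside $X_F$; consequently $\mu_{\xi_\iota}(\alpha\cdot\Phi)<\alpha\cdot w$ for a direction vector $\alpha$ supporting the vertex. On the other hand, the three segments together use each of the $2n$ symbols of $\tau_x\tau_y$ exactly once, so the period-weighted average of these three integrals equals $\alpha\cdot w$ exactly --- a contradiction. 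Some such averaging (or extreme-point) argument is indispensable here; without it your proof does not go through.
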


 \begin{proof}
 Let $\cC_1,\dots,\cC_n$ denote the cylinders of length $k$ associated with $w$ (see Definition \ref{defUk}). Rather than working with $f$, we consider the conjugate system $g:Y\to Y$ defined in Proposition \ref{propone}. For $x\in \Per(f)$, by slightly abusing the notation, we continue to refer to the corresponding periodic point of $g$ as $x$. Recall that for the system $g$ each symbol corresponds to a cylinder of length $k$ in $X$.  
We will make use of the following elementary 
 Observation: If $x$ is a periodic point in $X_F$ then  $x$ has period $np$ for some $p\in\bN$. Moreover, every symbol (i.e. $k$-cylinder) occurs in the generating sequence $\tau_x$
 of $x$ precisely $p$ times.
  We note that the Observation is a consequence of the following  facts: 
  \begin{enumerate}
  \item[(a)]
   By definition of $\cU(k)$, every $k$-elementary periodic point in $X_F$ has period $n$.
   \item[(b)]
    Every periodic orbit can be written as a finite concatenation of $k$-elementary periodic orbits.
    \item[(c)]
     The shift  $X_F$ is given by the infinite $g$-admissible concatenations of generating sequences of $k$-elementary periodic orbits in $X_F$ (see Section 2.4.).
     \end{enumerate}
      It follows from (c) that in order to prove the proposition, it suffices to show that $X_F$ contains one and only one $k$-elementary periodic orbit.
Let $x,y\in \EPer^k(f)$ with $\rv(\mu_x)=\rv(\mu_y)=w$. By (a), both $x$ and $y$ have period $n$. We have to show that $x$ and $y$ belong to the same periodic orbit. Assume on the contrary that $x$ and $y$ have distinct $k$-permutable orbits. Let $\tau_x=x_1 \cdots x_n$ and $\tau_y=y_1\cdots y_n$ be the generating sequences of $x$ and $y$ respectively. By replacing $x$ with an iterate of $x$ if necessary, we may assume $x_1=y_1$.  Thus, the cylinders $x_n$ and $y_n$ can both be followed by the cylinder  $x_1$. Define 
$l=\max\{\iota\in\{2,\dots,n\}: x_\iota\not=y_\iota\}.$ It follows that there exist unique $2\leq i,j\leq l-1$ such that $x_i=y_l$ and $y_j=x_l$. That is,
\begin{equation}\label{eq23}
\begin{split}
\tau_x=x_1 x_2\cdots x_{i-1} y_l x_{i+1} \cdots x_{l-1} x_lx_{l+1}\cdots x_n,& \,\, \,\,  {\rm and} \\
\tau_y=x_1 y_2\cdots y_{j-1}  x_l y_{j+1}\cdots y_{l-1} y_l x_{l+1}\cdots x_n.& \
\end{split}
\end{equation}
We define
\begin{equation}\label{eq231}
\begin{split}
a=x_1 x_2 \cdots x_{i-1},\, b=y_l x_{i+1} \cdots x_{l-1},\, c=x_l x_{l+1} \cdots x_n,&\, \, \,{\rm and}\\ 
a'= x_1 y_2 \cdots y_{j-1},\, b'= x_l y_{j+1}\cdots y_{l-1},\, c'=y_l x_{l+1} \cdots x_n.&\
\end{split}
\end{equation}
Hence $\tau_x=abc$ and $\tau_y=a'b'c'$. By Equations \eqref{eq23}, \eqref{eq231}, $bb', ac'$ and $a'c$ are generating sequences of periodic points of $g$. We write $\xi_1=\Or(bb'), \xi_2=\Or(ac')$ and $\xi_3= \Or(a'c)$ (see Figure 1.) and denote by $n_1,n_2,n_3$ the periods of the periodic points $\xi_1,\xi_2,\xi_3$  respectively.
It follows from the observation that $\xi_1\not\in X_F$ since $x_1\not\in bb'$. Similarly, $\xi_2, \xi_3\not\in X_F$
since $x_l\not\in ac'$ and $y_l\not\in a'c$.  
Let $\alpha\in S^{m-1}$ be a direction vector associated with $F$. 
We define the one-dimensional function $\phi=\alpha\cdot \Phi$. It follows that $\alpha\cdot w=\max \R(\phi)$, where $\R(\phi)$ denotes the rotation set of $\phi$ which is a compact interval in $\bR$. Moreover,  $F_1=\{\alpha\cdot w\}$ is a face of $\R(\phi)$ with $X_{F_1}=X_F$. Since $\xi_1,\xi_2,\xi_3\not\in X_{F}$ we obtain
\begin{equation}\label{eq232}
\mu_{\xi_1}(\phi), \mu_{\xi_2}(\phi), \mu_{\xi_3}(\phi) < \alpha\cdot w.
\end{equation}
Note that the integrals in Equation \eqref{eq232} are averages of the values of $\phi$ on the corresponding cylinders. Together with $x=abc$ and $y=a'b'c'$ we may conclude that
\begin{equation}
\begin{split}
WA\eqdef&
\frac{1}{2n}\left(n_1 \mu_{\xi_1}(\phi)+ n_2\mu_{\xi_2}(\phi)+n_3 \mu_{\xi_3}(\phi)\right)\\
=&\frac{1}{2n}\sum_{\iota=1}^n\left(\phi(x_\iota)+\phi(y_\iota)\right)=\alpha\cdot w.
\end{split}
\end{equation}
Note that $WA$ is  the weighted average of the integrals $\mu_{\xi_\iota}(\phi), \iota=1,2,3$. Thus, by Equation \eqref{eq232} we must have $WA<\alpha\cdot w$ which gives a contradiction. Thus, $x$ and $y$ must belong to the same periodic orbit and the proof is complete.
\end{proof}

\begin{center}
\begin{pspicture}(2,0)(-2,3)
\psset{unit=2cm}

\pscircle(-2,1){0.02}
\rput(-1.5,1){$\dots$}
\pscircle(-1,1){0.02}

\pscircle(-0.5,1){0.02}

\pscircle(0.5,1){0.02}
\rput(0,1){$\dots$}
\pscircle(0.5,1){0.02}

\pscircle(1,1){0.02}
\rput(1.5,1){$\dots$}
\pscircle(2,1){0.02}

\rput(-0.6,0){$($}\rput(-0.6,1){$($}
\rput(-2.1,0){$($}\rput(-2.1,1){$($}
\rput(0.3,0){$)$}\rput(0.3,1){$)$}
\psline(1,0.1)(1,0.9)
\psline(1.03,0.1)(1.03,0.9)
\psline(2,0.1)(2,0.9)
\psline(2.03,0.1)(2.03,0.9)

\psline(-0.5,0.1)(0.5,0.9)
\psline(-0.46,0.1)(0.54,0.9)
\psline(0.5,0.1)(-0.5,0.9)
\psline(0.54,0.1)(-0.46,0.9)

\psline(-2,0.1)(-2,0.9)
\psline(-1.97,0.1)(-1.97,0.9)

\rput(0.4,0){$($}\rput(0.4,1){$($}
\rput(-0.9,0){$)$}\rput(-0.9,1){$)$}
\rput(2.1,0){$)$}\rput(2.1,1){$)$}

\pscircle(-2,0){0.02}
\rput(-1.5,0){$\dots$}
\pscircle(-1,0){0.02}

\rput(-2.5,1){$\tau_x:$}
\rput(-2.5,0){$\tau_y:$}
\pscircle(-0.5,0){0.02}

\pscircle(0.5,0){0.02}
\rput(0,0){$\dots$}
\pscircle(0.5,0){0.02}

\pscircle(1,0){0.02}
\rput(1.5,0){$\dots$}
\pscircle(2,0){0.02}

\rput(-1.5,1.2){$a$}
\rput(-1.5,-0.2){$a'$}

\rput(-0.1,1.2){$b$}
\rput(-0.1,-0.2){$b'$}

\rput(1.5,1.2){$c$}
\rput(1.5,-0.2){$c'$}
\rput(-0.02,-0.8){{\bf Figure 1.} Periodic points $\xi_1=O(bb'), \xi_2=O(ac'), \xi_3=O(a'c)$ with other}
\rput(-0.5,-1.05){ periods  stemming from two permutable periodic points $x$ and $y$.}
\end{pspicture}
\end{center}
\vspace{2.8cm}
\noindent
For $r=1,\dots,m$ we define $$
LC_{k,r}(X,\bR^m)=\{\Phi\in LC_k(X,\bR^m): \dim \R(\Phi)=r\}
$$
and $\cU(k,r)= \cU(k)\cap LC_{k,r}(X,\bR^m)$.
The next result shows that $\cU(k,r)$ is open and dense in $LC_{k,r}(X,\bR^m)$.

\begin{theorem} \label{dichotomy}The following properties hold:
\begin{enumerate}
\item[(a)] $\cU(k,r)$ is a  nonempty open set in $LC_{k,r}(X,\bR^m)$.
\item[(b)] $LC_{k,r}(X,\bR^m)=\overline{\cU(k,r)}$.
\item[(c)] For every $\Phi\in \cU(k)$ there exists an open and dense set $D_\Phi\subset S^{m-1}$ of direction vectors such that for each $\alpha\in D_\Phi$ the zero-temperature measure $\mu_{\infty,\alpha\cdot \Phi}$ is supported on a $k$-elementary periodic orbit. 
\end{enumerate}
\end{theorem}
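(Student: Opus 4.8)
I would begin with part (c), which is the most self-contained. Set $D_\Phi\eqdef\{\alpha\in S^{m-1}:F_\alpha(\Phi)\text{ is a vertex of }\R(\Phi)\}$. First I would check that $D_\Phi$ is open and dense in $S^{m-1}$: by Theorem \ref{thmelemtentary} the polyhedron $\R(\Phi)$ has only finitely many faces, and for each positive-dimensional face $F$ the set of directions generating $F$ lies in a proper, lower-dimensional subset of $S^{m-1}$ — when $\R(\Phi)$ is full-dimensional this is the standard normal-cone decomposition, and in general one first removes the lower-dimensional great subsphere $S^{m-1}\cap(\operatorname{aff}\R(\Phi))^{\perp}$ (on which $F_\alpha(\Phi)=\R(\Phi)$) and then applies the same decomposition to the remaining directions, using that $F_\alpha(\Phi)$ depends only on the component of $\alpha$ along the linear part of $\operatorname{aff}\R(\Phi)$. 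Hence $S^{m-1}\setminus D_\Phi$ is a finite union of nowhere-dense sets. Now fix $\alpha\in D_\Phi$, so $F_\alpha(\Phi)=\{w\}$ for a vertex $w$. Since $\Phi\in\cU(k)$, Proposition \ref{propkey} gives $x\in\EPer^k_n(f)$ with $\cM_\Phi(w)=\{\mu_x\}$. Finally, $\Phi$ being locally constant, $\alpha\cdot\Phi$ has a unique ground state, namely $\mu_{\infty,\alpha\cdot\Phi}$, and Theorem \ref{KW}(a) gives $\rv(\mu_{\infty,\alpha\cdot\Phi})\in F_\alpha(\Phi)=\{w\}$; thus $\mu_{\infty,\alpha\cdot\Phi}\in\cM_\Phi(w)=\{\mu_x\}$, i.e.\ $\mu_{\infty,\alpha\cdot\Phi}=\mu_x$ is supported on the $k$-elementary periodic orbit of $x$.

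\textbf{Part (a), openness.} Here the plan is to use that $\EPer^k(f)$ is finite (the Remark following Definition \ref{defkelementary}) and that each rotation vector $v_i(\Phi)\eqdef\int\Phi\,d\mu_{x^i}$, $x^i\in\EPer^k(f)$, is a \emph{linear} (hence continuous) function of $\Phi\in LC_k(X,\bR^m)$, while $\R(\Phi)=\conv\{v_i(\Phi)\}$ by Theorem \ref{thmelemtentary}. Fix $\Phi_0\in\cU(k,r)$. Since $\dim\R(\Phi)\equiv r$ on $LC_{k,r}$ and $\operatorname{aff}\R(\Phi)$ varies continuously with $\Phi$, conjugating by a continuously varying affine map of $\bR^m$ reduces matters to the case $r=m$; this is harmless because the conditions defining $\cU(k)$ only refer to the combinatorics of $\R(\Phi)$ (which of the $v_i$ are vertices, which coincide, which lie in the interior). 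One then invokes the elementary stability fact for polytopes: under a sufficiently small perturbation of the generating set, interior points stay interior and vertices stay vertices (both via the description $\inn\conv\{p_j\}=\{\sum\lambda_jp_j:\lambda_j>0,\ \sum\lambda_j=1\}$ together with surjectivity of the barycentric map, and, for vertices, perturbation of an exposing functional). Consequently condition (b) of Definition \ref{defUk} persists, since each $v_i(\Phi_0)$ is either a vertex or interior and hence so is $v_i(\Phi)$. For condition (a): if $v_a(\Phi)=v_b(\Phi)$ is a vertex for $\Phi$ near $\Phi_0$ while $x^a,x^b$ are not $k$-permutable, then $\mu_{x^a}\neq\mu_{x^b}$, so $L_{a,b}\eqdef\{\Phi:v_a(\Phi)=v_b(\Phi)\}$ is a proper linear subspace; either $\Phi_0\notin L_{a,b}$ (a whole neighborhood then avoids $L_{a,b}$ — contradiction), or $\Phi_0\in L_{a,b}$ so $v_a(\Phi_0)=v_b(\Phi_0)=:w_0$, which by condition (b) for $\Phi_0$ is either a vertex — forcing $k$-permutability by condition (a) for $\Phi_0$, a contradiction — or interior, forcing $v_a(\Phi),v_b(\Phi)$ interior by the stability fact, contradicting ``vertex''. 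Hence $\cU(k,r)$ is open in $LC_{k,r}(X,\bR^m)$.

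\textbf{Part (b), density, and nonemptiness in (a).} Given $\Phi_0\in LC_{k,r}(X,\bR^m)$, the plan is to produce $\Phi\in\cU(k,r)$ arbitrarily close to it. Splitting off the coordinates of $\Phi_0$ that are cohomologous to a constant (equivalently, working inside the linear part of $\operatorname{aff}\R(\Phi_0)$ and perturbing only there) reduces to $r=m$. A small perturbation avoiding the finitely many proper linear subspaces $L_{a,b}$ over non-$k$-permutable pairs $(x^a,x^b)$ secures condition (a), in fact in the strong form that non-permutable orbits have distinct rotation vectors. It remains to enforce condition (b): no $k$-elementary rotation vector should lie in the relative interior of a positive-dimensional face of $\R(\Phi)$. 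For this one must reckon with the \emph{forced} affine relations $\sum_i\lambda_i\mu_{x^i}=0$ among periodic-orbit measures — arising from concatenation identities among generating segments of $k$-elementary orbits — which pin certain rotation vectors to prescribed relative positions regardless of $\Phi$. I would show, using the non-wandering decomposition \eqref{sumtrans} and the combinatorial mechanism behind Proposition \ref{propkey}, that any $k$-elementary orbit not forced by such a relation onto a face can be displaced off every positive-dimensional face by an arbitrarily small perturbation, and that the set of $\Phi\in LC_{k,r}(X,\bR^m)$ for which some orbit remains trapped is contained in a finite union of proper (semi-)algebraic subsets; its complement is then open and dense and lies in $\cU(k,r)$. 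Nonemptiness in part (a) then follows, since $LC_{k,r}(X,\bR^m)\neq\emptyset$ for each admissible $r$ (e.g.\ compose a function attaining the maximal dimension with a generic linear projection). I expect this last step to be the \textbf{main obstacle}: the rigidity imposed by the affine relations among $\{\mu_x:x\in\EPer^k(f)\}$ means the rotation vectors cannot be moved freely, so one must isolate precisely which ``trapped on a face'' configurations are genuinely unavoidable and show that all others form a nowhere-dense set — and here the structural input from Ziemian's theorem and the non-wandering decomposition is essential.
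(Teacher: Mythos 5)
Your parts (a) and (c) are essentially the paper's argument: (c) is exactly the combination of Proposition \ref{propkey}, Theorem \ref{KW}(a), and the fact that the normal directions of the vertices of a polyhedron form an open dense subset of $S^{m-1}$; the openness in (a) is the paper's two stability claims (vertices of $\R(\Phi_0)$ stay vertices, interior elementary rotation vectors stay interior), and your $L_{a,b}$ case analysis is a legitimate, slightly different way to check that condition (a) of Definition \ref{defUk} persists. One small point worth making explicit in (a): your ``vertices stay vertices'' stability fact is false for a general generating set when several coincident generators split apart under perturbation; here it is saved precisely because $\Phi_0\in\cU(k)$ forces the elementary orbits sitting over a vertex to be $k$-permutable, hence to induce the \emph{identical} linear functional on $LC_k(X,\bR^m)$, so they cannot split. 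You use this implicitly but should say it.

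The genuine gap is part (b) (and with it the nonemptiness in (a), which both you and the paper derive from (b)). You correctly identify the obstruction --- the affine relations among $\{\mu_x:x\in\EPer^k(f)\}$ prevent you from moving rotation vectors independently --- but you stop at a plan (``contained in a finite union of proper semi-algebraic subsets'') without producing the perturbation, and you flag this yourself as the main obstacle. The paper's resolving idea is concrete and sidesteps the rigidity issue entirely: if $w$ is a non-vertex boundary point carried by a \emph{minimal} $k$-elementary orbit $x$ with $k$-cylinders $\cC_0,\dots,\cC_{n-1}$, perturb by
$\Phi_t=\Phi+t(\alpha_1\chi_x,\dots,\alpha_m\chi_x)$, where $\chi_x$ is the indicator of $\cC_0\cup\dots\cup\cC_{n-1}$ (which lies in $LC_k$) and $\alpha$ is the outward normal of the largest face containing $w$. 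Since $x$ is $k$-elementary, $\mu_x(\cC_0\cup\dots\cup\cC_{n-1})=1$, while every other invariant measure assigns this union mass at most $1$, with equality only under strong support restrictions; hence $\alpha\cdot\rv_{\Phi_t}(\mu_x)=\alpha\cdot w+t$ strictly dominates $\alpha\cdot\rv_{\Phi_t}(\mu)$ for the competing measures, and $w+t\alpha$ becomes a vertex whose rotation class is controlled by the argument of Proposition \ref{propkey}. Iterating over the finitely many offending elementary orbits (and then over vertices to kill non-permutable coincidences) gives density. Without this --- or some equally explicit perturbation showing that no elementary orbit is ``genuinely trapped'' on a positive-dimensional face --- your part (b) is not a proof, and the nonemptiness claim in (a) is also unestablished.
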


\begin{proof}
In the proof we will make use of the following elementary\\ 
Observation 1: If $\mu\in \cM$ and $\Phi,\tilde{\Phi}\in C(X,\bR^m)$ with $\|\Phi- \tilde{\Phi}\|<\varepsilon$ , then $\|\rv_\Phi(\mu)-\rv_{\tilde{\Phi}}(\mu)\|<\epsilon$.\\
First we prove that $\cU(k,r)$ is open in $LC_{k,r}(X,\bR^m)$. Let $\Phi\in\cU(k,r)$. 
Without loss of generality we may assume that $\R(\Phi)$ has nonempty interior (i.e. $r=m$) because otherwise we 
can consider the relative interior and relative boundary of $\R(\Phi)$.   We leave the elementary adjustments for the case $r<m$ to the reader. 
Thus, we will omit the parameter $r$ in the following.


Let $w$ be a vertex  of $\R(\Phi)$ and let $$E_{\rm per}(w)=\{x\in \EPer^k(f): \rv(\mu_x)=w\}.$$ By Proposition \ref{propkey},
 $$E_{\rm per}(w)=\{x,f(x),\dots,f^{n-1}(x)\},$$ for some  $k$-elementary periodic point $x$ with period $n$.
 
Claim 1: There exists  $\varepsilon_w>0$, such that if $ \tilde{\Phi}\in LC_k(X,\bR^m)$ with $\|\Phi-\tilde{\Phi}\|<\varepsilon_w$ then $\tilde{w}=\rv_{\tilde{\Phi}}(\{\mu_x: x\in E_{\rm per}(w)\})$ is a 
vertex of $\R(\tilde{\Phi})$. Let $H=H(w)$ be a supporting hyperplane of the face $\{w\}$
of $\R(\Phi)$. It follows from the definition that for all $y\in \EPer^k(f)\setminus E_{\rm per}(w)$ the rotation vectors $\rv(\mu_y)$ lie in the same open half space determined by $H$. Moreover, since
$\EPer^k(f)$ is finite, we have
$$
\varepsilon_w=\frac{\min\{{\rm dist}(\rv(\mu_y),H): y\in  \EPer^k(f)\setminus E_{\rm per}(w)\}}{2}>0.
$$
Let now $\|\Phi- \tilde{\Phi}\|<\varepsilon_w$. By applying Observation 1 we conclude that $\tilde{H}=(\tilde{w}-w)+H$ is a supporting hyperplane of the face $\{\tilde{w}\}$ of $\conv(\{\rv_{\tilde{\Phi}}(\mu_y): y\in \EPer(f)\})$. Therefore, Claim 1 follows from Theorem \ref{thmelemtentary}. Define $E_{\rm per}(\Phi)=\bigcup E_{\rm per}(w)$, where the union is taken over all vertices $w$ of $\R(\Phi)$.

Claim 2: There exists  $\varepsilon_0>0$ such that if $ \tilde{\Phi}\in LC_k(X,\bR^m)$ with $\|\Phi-\tilde{\Phi}\|<\varepsilon_0$, then $\rv_{\tilde{\Phi}}(\{\mu_y: y\in \EPer^k(f)\setminus E_{\rm per}(\Phi)\})\subset \inn \R(\tilde{\Phi})$. Pick $\varepsilon_0>0$ such that $B(\rv_\Phi(\mu_y), 2\varepsilon_0)\subset \inn\R(\Phi)$ for all $y\in \EPer^k(f)\setminus E_{\rm per}(\Phi)$. Consider $ \tilde{\Phi}\in LC_k(X,\bR^m)$ with $\|\Phi-\tilde{\Phi}\|<\varepsilon_0$. Since $\R(\Phi)=\conv(\{\rv_\Phi(\mu_y): y\in E_{\rm per}(\Phi)\})$, Observation 1 implies that the Hausdorff distance of $\partial \R(\Phi)$ and $\partial \conv(\{\rv_{\tilde{\Phi}}(\mu_y): y\in E_{\rm per}(\Phi)\})$ is smaller than $\varepsilon_0$. Moreover, Observation 1 yields that  $\rv_{\tilde{\Phi}}(\mu_y)\in B(\rv_\Phi(\mu_y), \varepsilon_0)$ for all $y\in \EPer^k(f)\setminus E_{\rm per}(\Phi)$. Together this implies that  $\rv_{\tilde{\Phi}}(\{\mu_y: y\in \EPer^k(f)\setminus E_{\rm per}(\Phi)\})\subset \inn \conv(\{\rv_{\tilde{\Phi}}(\mu_x): x\in E_{\rm per}(\Phi)\})$ which implies Claim 2.

Finally we define $\varepsilon=\min \left(\{\varepsilon_0\}\cup \{\varepsilon_w: w\,\, {\rm vertex\,\, of}\, \,\R(\Phi)\}\right).$ We now apply Claim 1 and Claim 2  and conclude that if $ \tilde{\Phi}\in LC_k(X,\bR^m)$ with $\|\Phi-\tilde{\Phi}\|<\varepsilon$ then $\tilde{\Phi}\in \cU(k)$. Hence, $\cU(k)$ is open. The statement that $\cU(k)$ is nonempty will follow from part $(b)$.

Next we prove $LC_{k,r}(X,\mathbb{R}^m)=\overline{\cU(k,r)}$. As before we assume $r=m$ and omit the parameter $r$ in the following. Let $\Phi\in LC_k(X,\mathbb{R}^m)$. Suppose there exists a non-vertex point $w\in \partial \R(\Phi)$ with at least one $k$-elementary periodic measure in the rotation class of $w$. Pick $x\in \EPer^k_n(f)$ with $\rv(\mu_x)=w$ which is minimal in the following sense: There is no elementary periodic orbit $y$ with $\rv(\mu_y)=w$ whose cylinder set associated with  $y$ (cf Definition \ref{defkelementary}) is a proper subset of the cylinder set $\{\cC_i=\cC(f^i(x)): i=0,\dots,n-1\}$ of the generating sequence of $x$. Clearly, such a minimal $x$ exists. Let $\alpha\in S^{m-1}$ be a direction vector of the largest (in the sense of inclusion) face containing $w$.
Further, let $\chi_x$ denote the characteristic function of $\cC_0\cup\dots\cup \cC_{n-1}$ on $X$.   
We define a perturbation of $\Phi$ along the direction vector $\alpha$. More precisely,  we define 
\begin{equation}\label{pertub}\Phi_t=\Phi+  (t\alpha_1\chi_x,\dots, t\alpha_m\chi_x).
\end{equation}
 Clearly, $\Phi_t  \rightarrow \Phi$
 as
$t\rightarrow 0$. It follows that
the corresponding rotation vector 
$w+t\alpha $ is a vertex point of $\R(\tilde{\Phi}_t)$ for any $t>0$. Moreover, the vertex point $w+t\alpha $ satisfies Property (a) of Definition \ref{defUk}.
Therefore, by applying the argument in the proof of Proposition \ref{propkey} to the face $\{w+ t\alpha\} $ we may conclude that $w+ t\alpha$ has only one $k$-elementary periodic point measure in its $\Phi_t$ rotation class.  (See Figure 2 below). 
Using induction and performing this perturbation one by one (and arbitrarily  small) we can remove all non-vertex rotation vectors of elementary periodic point measures from the boundary of $\R(\Phi)$ by an arbitrary small perturbation. To complete the proof of $(b)$ we still have to remove all but one $k$-elementary periodic point measures from the rotation classes of the other vertex points. However, this can be accomplished by performing a similar perturbation as in  \eqref{pertub}, and again doing it one by one, that is, vertex by vertex. We leave the details to the reader.

Finally, we show Assertion (c). Let $\Phi\in \cU(k)$. 
By Proposition \ref{propkey},  for any direction vector $\alpha\in S^{m-1}$  of any of the vertices $w$ of $\text{Rot}(\Phi)$, there is one and only one $k$-elementary periodic point measure in its rotation class $\{\mu\in\mathcal{M}: \text{rv}(\mu)=\omega\}$. By Theorems \ref{thmfaceshift} and  \ref{KW}, the zero-temperature measure $\mu_{\infty,\alpha\cdot \Phi}$ is supported on this $k$-elementary periodic orbit.  Since $\text{Rot}(\Phi)$ is a polyhedron, the direction vectors associated with the vertices of $\R(\Phi)$ form an open and dense subset of $S^{m-1}$.  This completes the proof of the theorem.
\end{proof}

\begin{center}
\begin{pspicture}(-3,-2)(3,2)
\psset{unit=0.4cm}
\psline{->}(-3.5,1.25)(-2,3)
\rput(-2.8,3.2){$\alpha$}
\psline(-8,0)(-5,2.5)(-2,0)(-3.5,-3)(-6.5,-3)(-8,0)
\pscurve{->}(-0.7,3.2)(0.7,4)(1.8,3.2)
\rput(-2.6,1.4){$w$}
\rput(0.05,-4.8){{\bf Figure 2.} Perturbation to remove  non-vertex elementary periodic points.}
\psline(3,0)(6,2.5)(8,1.75)(9,0)(7.5,-3)(4.5,-3)(3,0)
\rput(9.1,2.55){$w+t\alpha$}
\end{pspicture}
\end{center}


\section{The proof of the Theorem A}

This section is devoted to the proof of  our main result Theorem A.  Our approach uses  a suitable higher dimensional function $\Phi$ that encodes all one-dimensional functions. We will then obtain Theorem A by applying results about the connection between rotation sets and zero-temperature measures (see Theorem \ref{KW})  to the function $\Phi$.   We note that although $\Phi$ is far from being unique, we will call $\Phi$ a universal function. 


We start by presenting a version of Theorem A for higher-dimensional functions. Recall that $m_c(k)$ denotes the cardinality of the set of cylinders of length $k$ in $X$.

\begin{theorem}\label{finite measures}Let $k\in\bN$. Then 
\begin{enumerate}
\item[\rm (a)]
For  $\Phi\in LC_k(X,\bR^m)$  there exist a finite set $\mathcal{M}(\Phi)\subset \mathcal{M}_E$ such that  for each $\alpha\in S^{m-1}$ the zero-temperature measure $\mu_{\infty,\alpha\cdot \Phi}$ is a convex sum of measures in $\mathcal{M}(\Phi)$.
\item[\rm(b)]
There exists $\Phi_{\rm univ}\in LC_k(X,\bR^{m_c(k)})$
and an analytic surjection $I:LC_k(X,\bR)\setminus \{\phi\equiv 0\}\to S^{m_c(k)-1}$  such that for all $\phi\in LC_k(X,\bR)\setminus \{\phi\equiv 0\}$ we have that
$\mu_{\infty,\phi}=\mu_{\infty,I(\phi)\cdot \Phi_{\rm univ}}$ is a convex sum of measures in  $\cM(\Phi_{\rm univ})$. 
\end{enumerate}
\end{theorem}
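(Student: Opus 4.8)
The plan is to build the universal function $\Phi_{\rm univ}$ directly from the cylinder structure of length $k$, and then read off both statements as consequences of the classification of higher-dimensional functions in Section 3 (Theorems \ref{dichotomy} and \ref{KW}) together with Ziemian's polyhedrality (Theorem \ref{thmelemtentary}). First I would set $m'=m_c(k)$ and enumerate the cylinders of length $k$ in $X$ as $\cC_1,\dots,\cC_{m'}$. Define $\Phi_{\rm univ}=(\chi_{\cC_1},\dots,\chi_{\cC_{m'}})\in LC_k(X,\bR^{m'})$, where $\chi_{\cC_j}$ is the characteristic function of $\cC_j$. The point of this choice is that \emph{every} $\phi\in LC_k(X,\bR)$ can be written as $\phi=\sum_{j=1}^{m'}c_j\chi_{\cC_j}=c\cdot\Phi_{\rm univ}$ for a unique coefficient vector $c=c(\phi)\in\bR^{m'}$, and this identification $\phi\mapsto c(\phi)$ is a linear isomorphism $LC_k(X,\bR)\to\bR^{m'}$. (One must note the harmless point that $\chi_{\cC_j}$ is only locally constant on cylinders of length $k$ when $f$ is a subshift of finite type; the hypotheses already place us in $LC_k$.) For $\phi\not\equiv 0$ we then define $I(\phi)=c(\phi)/\|c(\phi)\|\in S^{m'-1}$; this is the restriction of the (real-)analytic normalization map to a Euclidean sphere, hence analytic, and it is surjective because $\phi\mapsto c(\phi)$ is onto $\bR^{m'}$.

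Next I would verify the key identity $\mu_{\infty,\phi}=\mu_{\infty,I(\phi)\cdot\Phi_{\rm univ}}$. Since $c(\phi)=\|c(\phi)\|\,I(\phi)$, the potentials $\phi$ and $I(\phi)\cdot\Phi_{\rm univ}$ differ only by the positive scalar $\|c(\phi)\|$; scaling a potential by a positive constant $\lambda$ sends $t\phi$ to $t\lambda\,(\phi/\lambda')$-type reparametrizations and in particular $\mu_{t\phi}=\mu_{(\lambda t)(\phi/\lambda)}$, so the families $(\mu_{t\phi})_{t>0}$ and $(\mu_{t(I(\phi)\cdot\Phi_{\rm univ})})_{t>0}$ are the same up to a time change $t\mapsto\|c(\phi)\|\,t$, and hence have the same weak$^*$ limit as $t\to\infty$ (which exists by the locally constant case of Theorem \ref{KW}, as recalled after its statement). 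This gives the displayed equation in (b). It remains to produce the finite set $\cM(\Phi)$ of (b) — namely $\cM(\Phi_{\rm univ})$ — and, more generally for (a), the finite set $\cM(\Phi)$ for an arbitrary $\Phi\in LC_k(X,\bR^m)$, and to show $\mu_{\infty,\alpha\cdot\Phi}$ is always a convex combination of its members.

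For part (a): by Theorem \ref{thmelemtentary} the rotation set $\R(\Phi)$ is a polyhedron, so it has finitely many faces $F$; for each face $F$ let $X_F$ be the associated subshift of finite type of \eqref{perface}, which by \eqref{sumtrans} decomposes into finitely many transitive components $X_1,\dots,X_t$. Let $\cM(\Phi)$ be the (finite) collection of all measures of maximal entropy $\mu_{X_i}$ of all transitive components of all $X_F$, $F$ a face of $\R(\Phi)$; each such $\mu_{X_i}$ is ergodic, so $\cM(\Phi)\subset\cM_E$. Now fix $\alpha\in S^{m-1}$. By Theorem \ref{KW}(a) the zero-temperature measure $\mu_{\infty,\alpha\cdot\Phi}$ is a ground state of $\alpha\cdot\Phi$, its rotation vector lies in the face $F=F_\alpha(\Phi)$, and it maximizes entropy among measures with rotation vector in $F$; by Theorem \ref{thmfaceshift} this means $\mu_{\infty,\alpha\cdot\Phi}$ is supported on $X_F=X_1\cup\dots\cup X_t$ and has maximal entropy among all invariant measures on $X_F$. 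A measure of maximal entropy on a finite disjoint union of transitive subshifts of finite type is a convex combination of the measures of maximal entropy of those components having the largest topological entropy; hence $\mu_{\infty,\alpha\cdot\Phi}$ is a convex combination of measures in $\cM(\Phi)$, proving (a). Applying this with $\Phi=\Phi_{\rm univ}$ and using that every $\phi$ corresponds to some direction $\alpha=I(\phi)$ finishes (b).

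The main obstacle is the last structural point: that an entropy-maximizing invariant measure on $X_F$ is supported on the top-entropy transitive components and is a convex combination of their unique measures of maximal entropy. This requires knowing that the components $X_i$ of \eqref{sumtrans} are genuinely dynamically isolated (no measure can charge the "gluing" between distinct $X_i$), which is exactly the "no transitions between sets other than $X_i$" property recorded after \eqref{sumtrans} (Corollary 5.1.3 in \cite{Kit}); given that, any ergodic measure on $X_F$ lives on a single $X_i$, entropy is affine, and the ergodic decomposition reduces the claim to the transitive case where the measure of maximal entropy is unique. I would also take care that in Theorem \ref{KW}(a) the hypothesis is transitivity of $f$ on $X$ (which we have), and that $X_F$, though only non-wandering rather than transitive, is handled by passing to its transitive components as above rather than applying Theorem \ref{KW} directly to $f|_{X_F}$.
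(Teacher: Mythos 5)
Your proposal is correct and follows essentially the same route as the paper: part (b) via a function whose values on $k$-cylinders form a basis of $\bR^{m_c(k)}$ (your characteristic-function choice is the standard-basis instance of this), the normalization map $I$, and the positive-rescaling invariance of the zero-temperature limit; part (a) via the polyhedral face decomposition of $\R(\Phi)$, Theorems \ref{KW} and \ref{thmfaceshift}, and the splitting of $X_F$ into transitive components with their Parry measures. The only cosmetic difference is that your $\cM(\Phi)$ collects the measures of maximal entropy of \emph{all} transitive components rather than only the top-entropy ones per face, which is a harmless enlargement.
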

\begin{proof}
We first prove {\rm (a).} Let $k\in \bN$ and $\Phi\in LC_k(X,\bR^m)$. It follows from  Theorem \ref{thmelemtentary} that the rotation set $\R(\Phi)$ has finitely many faces $F_1,\dots, F_\ell$. Therefore,
\begin{equation}\label{parts1}
S^{m-1}=\bigcup_{i=1}^{\ell} \alpha(F_i)
\end{equation}
is a partition of $S^{m-1}$ into   disjoint nonempty sets of direction vectors. Let us consider a fixed face $F\in \{F_1,\dots,F_\ell\}$ and let  $\alpha\in \alpha(F)$.
By Theorem \ref{KW},  the rotation vector of the zero-temperature measure $\mu_{\infty,\alpha\cdot \Phi}$ lies in the face  $F=F_{\alpha}(\Phi)$,
and 
\begin{equation}
h_{\mu_{\infty,\alpha\cdot \Phi}}(f)=\sup\{h_{\nu}(f):\text{ rv}(\nu)\in F\}.
\end{equation}
On the other hand, by Theorem \ref{thmfaceshift}, $\text{rv}(\nu)\in F$ if and only if $\text{supp}\, \nu\in X_{F},$
where $X_{F}=\overline{\{x\in \Per(f): \rv(\mu_x)\in F\}}.$ 
It follows that $\mu_{\infty,\alpha\cdot \Phi}$ is a measure of maximal entropy of  $f\vert_{X_F}$.
By Equation \eqref{sumtrans}, there exist $t=t(F)\in \bN$ and invariant sets $X_1,\dots,X_t\subset X$ such that
$f\vert_{X_j}$ is a transitive subshift of finite type and
$X_{F}=X_{1}\cup\dots \cup X_{t}.$
 It is well known that for each $j=1,\dots,t$ there exists a unique measure of  maximal entropy  $\mu_{F,j}$ of
$f\vert_{X_j}$ (which is called the Parry measure of 
$f\vert_{X_j}$).  We conclude that
\begin{eqnarray*}h_{\text{top}}(X_{F})&=&\max\{h_{\text{top}}(f\vert_{X_1}), \dots, h_{\text{top}}(f\vert_ {X_{t}})\}\\
&=&\max\{h_{\mu_{F,1}}(f),\dots,h_{\mu_{F,t}}(f)\}.
\end{eqnarray*}
Define  
\begin{equation}\label{eqmf}
\mathcal{M}_{F}=\{\mu_{F,j}: h_{\mu_{F,j}}(f)=h_{\text{top}}(X_{F})\},
\end{equation}
which is  a nonempty finite set of ergodic measures. Define $n_F=\vert \cM_F \vert$.

Using that the entropy is affine on $\cM$ we conclude that  any  measure of maximal entropy of $f\vert_{X_{F}}$  
must be a convex combination of  measures in $\mathcal{M}_{F}.$ 
In particular, for any $\alpha\in \alpha(F)$ there exist $0\leq a_{1,\alpha},\dots,a_{n_F,\alpha}\leq 1$ with $\sum a_{j,\alpha}=1$ such that
\begin{equation}\label{eqconvmax}
\mu_{\infty,\alpha\cdot \Phi}=\sum_{j=1}^{n_F} a_{j,\alpha} \mu_{F,j},
\end{equation}
where $\mu_{F,j}\in \cM_F$.
We define 
$
\mathcal{M}(\Phi)=\bigcup_{i=1}^\ell \mathcal{M}_{F_i}.
$
It now follows from Equations \eqref{parts1}, \eqref{eqmf} and \eqref{eqconvmax} that $\mathcal{M}(\Phi)$ is a  finite set of  ergodic measures with the desired property.
This completes the proof of part (a). \\
Next we prove (b). Set $m=m_c(k)$. Let $\{\cC_k(1),\dots,\cC_k({m})\}$ denote the set of cylinders of length $k$ in $X$. Recall that $LC_k(X,\bR)$ is identified with $\bR^m$.
Let $\cB=\{v_1,\dots,v_m\}$ be a fixed basis of $\bR^m$. We define
$\Phi_{\rm univ}:X\to \bR^m$  by $\Phi_{\rm univ}\vert_{\cC_k(i)}=v_{i}$. Clearly,  $\Phi_{\rm univ}\in LC_k(X,\bR^m)$. 
Since $\cB$ is a basis of $\bR^m$ there exists an isomorphism $L:LC_k(X,\bR)\to \bR^m$ such that $L(\phi)_1 v_1+\dots+ L(\phi)_mv_m=\phi$. We define 
$$
 I:LC_k(X,\bR)\setminus \{0\}\to S^{m-1},\quad I(\phi)=\frac{L(\phi)}{\| L (\phi)\|}.
 $$
 Clearly, $I$ is an analytic surjection. Moreover, for all  $\phi\in LC_k(X,\bR)\setminus \{0\}$ we have $\phi=\|L(\phi)\| I(\phi)\cdot \Phi$. We conclude from part (a) that $\mu_{\infty,\phi}=\mu_{\infty,I(\phi)\cdot \Phi_{\rm univ}}$ is a convex sum of measures in $\cM(\Phi_{\rm univ})$.
 \end{proof}
 \begin{center}
\begin{pspicture}(-3,-4)(0,2.5)
\psset{unit=0.4cm}
\psline{->}(-5,-0.5)(-1.8,2.6)
\psline{->}(-5,-0.5)(-8,2.8)
\psline{->}(-5,-0.5)(-5,-5)
\psline{->}(-5,-0.5)(-1,-2.5)
\rput(-5,1.2){\tiny $\text{Rot}(\Phi)$}
\psline{->}(-5,-0.5)(-9,-2.5)
\rput(-5, 4.5){$\cU_6$}
\rput(0.2,0){$\cU_7$}
\rput(-2.3,-4.95){$\cU_8$}
\rput(-7.5,-4.8){$\cU_9$}
\rput(-10.3,0.2){$\cU_{10}$}
\rput(-8.5,3){$\alpha_1$}
\rput(-5,-5.4){$\alpha_4$}
\rput(-0.4,-2.8){$\alpha_3$}
\rput(-1.2,2.7){$\alpha_2$}
\rput(-9.5,-2.6){$\alpha_5$}
\rput(1,3){$S^{m-1}$}
\psline(-8,0)(-5,2.5)(-2,0)(-3.5,-3)(-6.5,-3)(-8,0)
\pscircle(-5,-0.5){4.5}
\rput(-4.6,-8){{\bf Figure 3.} The partition of the sphere for a rotation set with ten faces.}
\rput(-8.65,0){\small $w_{10}$}
\rput(-4.87,2.8){\small $w_6$}
\rput(-1.4,0){\small $w_7$}
\rput(-3.3,-3.4){\small $w_8$}
\rput(-6.5,-3.4){\small $w_9$}
\end{pspicture}
\end{center}

\begin{remark}\label{remdeterminant}
We note that the universal function $\Phi_{\rm univ}$ in part (b) of Theorem \ref{finite measures} is not unique. On the contrary, any function $\Phi\in LC_k(X,\bR^{m_c(k)})$ whose values on the cylinders of length $k$ form
a basis in $\bR^{m_c(k)}$ has this property. Thus, the set of $\Phi$ with the universal function property is a dense open subset of  $LC_k(X,\bR^{m_c(k)})$.
\end{remark}
By putting together the results of Proposition \ref{propkey} and Theorems \ref{dichotomy} and  \ref{finite measures} we are finally ready to prove Theorem A.
\begin{proof}[The proof of Theorem A]
Fix $k\in\mathbb{N}$ and let $m=m_c(k)$. It follows from Theorem \ref{finite measures} (b) and Remark \ref{remdeterminant} that there exists a universal function $\Phi_{\rm univ}\in LC_k(X,\bR^m)$ such that $\Phi_{\rm univ}\in \cU(k)$. Since $\Phi_{\rm univ}$ is universal there exists $\alpha\in S^{m-1}$ such that $\alpha\cdot \Phi_{\rm univ}\equiv const$. Thus,  by \cite[Proposition 6]{KW1},   $\inn \R(\Phi_{\rm univ})=\emptyset$ which implies $\dim \R(\Phi_{\rm univ})<m$. 
We conclude that $\R(\Phi_{\rm univ})$ is a face of itself.
Let $F_1,\dots, F_N$ denote the  faces of $\R( \Phi_{\rm univ})$. We recall  the definition of the subshift of finite type $X_F$ associated with a face $F$, see \eqref{sumtrans}.
We arrange the faces   $F_1,\dots, F_{\ell_1},\dots,F_{\ell_2},\dots,F_N$ such that $F_{1},\dots,F_{\ell_1}$ correspond to the vertex point faces of $\R( \Phi_{\rm univ})$ and
 $F_{\ell_1+1},\dots, F_{\ell_2}$ are the faces that correspond to the non-discrete transitive subshifts of finite type $X_{F_i}$.
In particular, we define $F_{\ell_1+1}=\R( \Phi_{\rm univ})$ which implies $X_{F_{\ell_1+1}}=X$.  We define $\cU_{\ell_1+1}=\{\phi\,\, \text{cohomologous to zero}\}$ and 
  $\cM_{\ell_1+1}=\{\mu_{\rm mme}\}$, where $\mu_{\rm mme}$ is the unique measure of maximal entropy of $f$. 
 It follows from \eqref{eqLiv} that $\cU_{\ell_1+1}$ is a subspace of  $\bR^m$. In particular $\cU_{\ell_1+1}$ is a convex cone. 
 We conclude that statement (b) holds for $i=\ell_1+1$.
For $i=1,\dots,N$ with $i\not=\ell_1+1$ we define
\begin{equation}\label{defcU}
\cU_i=\bigcup_{\alpha\in \alpha(F_i)} \{t \alpha: t\in \bR^+\}.
\end{equation}
Clearly, $\{\cU_1,\dots,\cU_N\}$ is a partition of $\bR^{m}$. (See Figure 3.) Further, since the $\alpha(F_i)$'s are convex sets\footnote{To make this convexity argument we consider here a slightly more general definition for $\alpha(F)$. Namely, we allow direction vectors $\alpha$ with $\|\alpha\|\not=1$.}, we may conclude that the sets $\cU_i$ are convex cones.  Assertion (a) is now a consequence of Proposition \ref{propkey} and Theorem \ref{dichotomy} (b) and (c) since $\Phi_{\rm univ}\in \cU(k)$. 
Further, assertion (b)  follows from Theorem \ref{finite measures}, Equation \eqref{eqconvmax} and the fact that $n_i=1$ for $i=\ell_1+1,\dots,\ell_2$. 
Finally, we consider assertion (c). Equation \eqref{messum} follows from Theorem \ref{finite measures} and Equation \eqref{eqconvmax}. The claim that for each $\cU_i$ there are only finitely many
choices for the coefficients $a_{j_i,\phi}$ is a consequence \cite[Theorem 2.1 (2)]{Br}.
Finally, that the entropy is constant on $\cM_i$ follows from Equations \eqref{eqmf} and \eqref{eqconvmax}. 
The proof of  Theorem A is complete. 
\end{proof}

\section{Regularity of localized entropy function on the boundary
}
The goal in this section is to study the regularity of $w\mapsto \mathcal{H}(w)$ on the boundary of the rotation set for locally constant functions. Let $f:X\to X$ be a transitive subshift of finite type with transition matrix $A$ and let $\Phi\in LC_k(X,\mathbb{R}^m)$. Applying  Theorem B in \cite{KW1} to our situation gives the following.

\begin{theorem}[\cite{GKLM, KW1}]\label{thmKW1}
 Let $\Phi\in LC_k(X,\mathbb{R}^m)$ with ${\rm int}\, \R(\Phi)\not=\emptyset$. Then
 \begin{enumerate}
\item[(i)] The map $T_\Phi:\bR^m\to {\rm int}\, \R(\Phi)$ defined by $v\mapsto \rv(\mu_{v\cdot \Phi})$ is a real-analytic diffeomorphism.
\item[(ii)] For all $v\in \bR^m$, the measure $\mu_{v\cdot \Phi}$ is the unique measure that maximizes entropy among the measures  $\cM_\Phi(T_\Phi(v))$. In this case we say that $\mu_{v\cdot \Phi}$ is the unique localized measure of maximal entropy at $w=T_\Phi(v)$.
\item[(iii)]  The map $w\mapsto \mathcal{H}(w)$ is real-analytic on $\inn  \R(\Phi)$.
\end{enumerate}
 \end{theorem}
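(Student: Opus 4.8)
The plan is to deduce all three parts from two classical facts about the pressure function and then from convex (Legendre) duality. First I would set $P(v)=P_{\rm top}(v\cdot\Phi)$ for $v\in\bR^m$ and invoke the standard thermodynamic formalism of H\"older potentials over transitive subshifts of finite type (see \cite{KW1, GKLM} and the references therein): the map $v\mapsto P(v)$ is real-analytic on $\bR^m$, its gradient is $\nabla P(v)=\big(\int\phi_1\,d\mu_{v\cdot\Phi},\dots,\int\phi_m\,d\mu_{v\cdot\Phi}\big)=\rv(\mu_{v\cdot\Phi})=T_\Phi(v)$, and its Hessian $D^2P(v)$ equals the covariance matrix (asymptotic variance) of $(\phi_1,\dots,\phi_m)$ with respect to the equilibrium state $\mu_{v\cdot\Phi}$. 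This is the only place I would lean on the cited literature rather than reprove things; it rests on the perturbation theory of the leading eigenvalue $e^{P(v)}$ of the Ruelle transfer operator. The hypothesis $\inn\R(\Phi)\neq\emptyset$ enters here: by \cite[Proposition~6]{KW1} it rules out a nonzero $t\in\bR^m$ with $t\cdot\Phi$ cohomologous to a constant, and such a $t$ exists precisely when $D^2P(v)$ is singular; hence $D^2P(v)$ is positive definite for every $v$, so $P$ is a real-analytic, strictly convex function on all of $\bR^m$.

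For part (i), strict convexity makes $T_\Phi=\nabla P$ injective, and $DT_\Phi(v)=D^2P(v)$ is everywhere invertible, so by the inverse function theorem $T_\Phi$ is an open map with real-analytic local inverses. To identify the image I would pass to the Legendre--Fenchel conjugate $P^\ast(w)=\sup_{v\in\bR^m}\big(v\cdot w-P(v)\big)$ and use the variational principle \eqref{eqvarpri} to check that $P^\ast(w)=-\cH(w)$ for $w\in\R(\Phi)$ and $P^\ast(w)=+\infty$ otherwise, so that $\mathrm{dom}\,P^\ast=\R(\Phi)$. Since $P$ is finite and strictly convex on $\bR^m$ it is essentially smooth and essentially strictly convex, and Rockafellar's duality for conjugate gradient maps \cite[Theorem~26.5]{R} then yields that $\nabla P$ is a bijection of $\bR^m$ onto $\inn(\mathrm{dom}\,P^\ast)=\inn\R(\Phi)$. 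Combined with the previous sentence this shows that $T_\Phi$ is a real-analytic diffeomorphism onto $\inn\R(\Phi)$.

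Part (ii) I would then obtain by a one-line variational computation: for $w=T_\Phi(v)$ and any $\nu\in\cM_\Phi(w)$ the variational principle gives $h_\nu(f)+v\cdot w=h_\nu(f)+\int v\cdot\Phi\,d\nu\le P(v)=h_{\mu_{v\cdot\Phi}}(f)+v\cdot w$, so $h_\nu(f)\le h_{\mu_{v\cdot\Phi}}(f)$, with equality if and only if $\nu$ is an equilibrium state of $v\cdot\Phi$, i.e. if and only if $\nu=\mu_{v\cdot\Phi}$ by uniqueness of the equilibrium state. This simultaneously proves the uniqueness asserted in (ii) and gives the closed formula $\cH(w)=h_{\mu_{v\cdot\Phi}}(f)=P(v)-v\cdot w$ at $w=T_\Phi(v)$. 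For part (iii) I would substitute $v=T_\Phi^{-1}(w)$ into this formula to get $\cH(w)=P\big(T_\Phi^{-1}(w)\big)-T_\Phi^{-1}(w)\cdot w$ on $\inn\R(\Phi)$ (equivalently $\cH=-P^\ast$ there); real-analyticity of $\cH$ on $\inn\R(\Phi)$ is then immediate from part (i) and the analyticity of $P$. The main obstacle is purely the analytic dependence of the pressure on the potential together with the identification of its first two derivatives; everything after that is convex analysis and the variational principle.
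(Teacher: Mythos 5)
The paper itself gives no proof of this theorem; it is quoted verbatim from \cite{GKLM, KW1}, and your argument is a correct reconstruction of the standard proof in those references: analyticity and strict convexity of $v\mapsto P_{\rm top}(v\cdot\Phi)$ (with $\inn\R(\Phi)\neq\emptyset$ ruling out a nontrivial combination cohomologous to a constant, hence a degenerate Hessian), followed by Legendre duality identifying $-\cH$ with $P^\ast$ and $\nabla P$ with a bijection onto $\inn(\mathrm{dom}\,P^\ast)=\inn\R(\Phi)$. All steps check out, including the upper semicontinuity and concavity of $\cH$ needed to conclude $(-\cH)^{\ast\ast}=-\cH$, so there is nothing to correct.
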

 More precisely, Theorem \ref{thmKW1} was proved in \cite{KW1} for so-called STP maps (which includes subshifts of finite type) and H\"older continuous functions. In \cite{GKLM} the authors give an alternate proof for Theorem \ref{thmKW1} and provided a technique which allows to derive the analyticity of  map $w\mapsto \mathcal{H}(w)$ also in the case $\inn \R(\Phi)=\emptyset$ but now on the relative interior of $\R(\Phi)$. Roughly speaking, in order to treat the empty interior case one identifies cohomologous functions in the family $\{v\cdot\Phi:v\in \bR^m\}$.
 
 Since the faces of the rotation set of  locally constant functions are themselves polyhedra, one might expect from Theorem \ref{thmKW1} that the localized entropy function restricted to the relative interior of such a face is also analytic. However, Example \ref{example3} in Appendix B  shows that $\cH$ is in general not analytic on the relative  interior of a face.
 
Let $F$ be a face of $\R(\Phi)$. It follows that $F$ is also a polyhedron. Let $d(F)<m$ denote the dimension of $F$. Since we are interested in the regularity of the localized entropy function on the relative interior of $F$ we may assume that $F$ is not a singleton, i.e. $d(F)>0$. Let $X_F=X_1\cup \dots\cup X_t$ be as in Equations \eqref{perface} and \eqref{sumtrans}. It follows that $f_i=f|_{X_i}$ is a transitive subshift of finite type and that $F_i\eqdef \R(\Phi,f_i)$ is a polyhedron of dimension $n_i\leq d(F)$. Moreover, $F=\conv(F_1\cup\dots\cup F_t)$. Further,  Theorem \ref{thmKW1} shows that $\mathcal{H}_i\eqdef \mathcal{H}_{\Phi,f_i}$ is real-analytic on $\ri F_i$.

Let us recall the following definition. Let $D_1,\dots,D_t$ be non-empty bounded subsets of $\bR^m$, let $h_i:D_i\to \bR$ be bounded continuous functions, and let $D=\conv(D_1\cup \dots \cup D_t)$. We say $h:D\to \bR$ is the \emph{concave envelope}\footnote{We note that the concave envelope is the analog to the convex hull of a family of functions. The latter is more commonly studied in convex analysis, see e.g. \cite{R}.}
  of $h_1,\dots,h_t$ and write $h=\Ec(h_1,\dots, h_t)$ if $h$ is the smallest concave function satisfying $h|_{D_i}\geq h_i$ for all $i=1,\dots,t$. 
We have the following.
\begin{proposition}\label{concenv}
Let $f:X\to X$ be a transitive subshift of finite type and let $\Phi\in LC_k(X,\mathbb{R}^m)$. Let $F$ be a face of $\R(\Phi)$ and let $X_F=X_1\cup \dots\cup X_t$ be the decomposition of $X_F$ into transitive subshifts $X_i$ of finite type associated with $F$ (see \eqref{perface} and \eqref{sumtrans}). Then $\mathcal{H}|_F$ is the concave envelope  of
$\cH_1,\dots,\cH_t$ where $\mathcal{H}_i= \mathcal{H}_{\Phi,f|_{X_i}}$.
\end{proposition}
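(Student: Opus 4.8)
The plan is to establish the two inequalities $\cH|_F \geq \Ec(\cH_1,\dots,\cH_t)$ and $\cH|_F \leq \Ec(\cH_1,\dots,\cH_t)$ separately, using on one side the affinity of the entropy map and convexity of $F$, and on the other side Theorem \ref{thmfaceshift} together with the ergodic decomposition. First I would set up the framework: by Theorem \ref{thmfaceshift}, a measure $\mu\in\cM$ has $\rv(\mu)\in F$ if and only if $\mathrm{supp}\,\mu\subset X_F=X_1\cup\dots\cup X_t$, so $\cM_\Phi(w)$ for $w\in F$ consists precisely of the invariant measures supported on $X_F$ with rotation vector $w$. Since $X_F$ is non-wandering with transitive pieces $X_i$ having no transitions between distinct pieces (see the discussion around \eqref{sumtrans}), every ergodic measure supported on $X_F$ is actually supported on a single $X_i$. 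Hence every $\mu$ with $\mathrm{supp}\,\mu\subset X_F$ decomposes as $\mu=\sum_{i=1}^t \lambda_i \mu_i$ with $\lambda_i\geq 0$, $\sum\lambda_i=1$, and $\mu_i$ an invariant probability measure on $X_i$; by affinity of both $h_\bullet(f)$ and $\rv(\cdot)$ we get $h_\mu(f)=\sum_i\lambda_i h_{\mu_i}(f)$ and $\rv(\mu)=\sum_i\lambda_i\rv(\mu_i)$, with $\rv(\mu_i)\in F_i=\R(\Phi,f_i)$.

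For the inequality $\cH|_F \leq \Ec(\cH_1,\dots,\cH_t)$: given $w\in F$ and any $\mu\in\cM_\Phi(w)$, write the decomposition above. Then
\[
h_\mu(f)=\sum_{i=1}^t\lambda_i h_{\mu_i}(f)\leq \sum_{i=1}^t \lambda_i \cH_i(\rv(\mu_i)),
\]
since $\rv(\mu_i)\in F_i$ and $h_{\mu_i}(f)\leq \sup\{h_\nu(f):\rv(\nu)=\rv(\mu_i), \mathrm{supp}\,\nu\subset X_i\}=\cH_i(\rv(\mu_i))$. Because $w=\sum_i\lambda_i \rv(\mu_i)$ is a convex combination of points in the respective $F_i$'s, and $\Ec(\cH_1,\dots,\cH_t)$ is by definition a concave function dominating each $\cH_i$ on $F_i$, the right-hand side is $\leq \Ec(\cH_1,\dots,\cH_t)(w)$. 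Taking the supremum over $\mu\in\cM_\Phi(w)$ gives $\cH(w)\leq \Ec(\cH_1,\dots,\cH_t)(w)$. A small technical point to address is that the concave envelope as defined requires continuous bounded $h_i$ on bounded sets $D_i$; here $\cH_i$ is continuous on $F_i$ by the Gale--Klee--Rockafellar theorem (as noted in the introduction, since $F_i$ is a polyhedron), so the definition applies cleanly.

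For the reverse inequality $\cH|_F \geq \Ec(\cH_1,\dots,\cH_t)$: it suffices to show that $\cH|_F$ is a concave function on $F$ satisfying $\cH|_{F_i}\geq \cH_i$ for each $i$, because $\Ec(\cH_1,\dots,\cH_t)$ is the \emph{smallest} such function. Concavity of $\cH$ on all of $\R(\Phi)$ (hence on $F$) was already observed in the introduction, following from affinity of $h_\bullet(f)$ and convexity of $\cM_\Phi(w)$. For the domination: if $w\in F_i$ and $\nu$ is an invariant measure on $X_i$ with $\rv(\nu)=w$, then $\mathrm{supp}\,\nu\subset X_i\subset X_F$, so by Theorem \ref{thmfaceshift} $\nu\in\cM_\Phi(w)$, giving $h_\nu(f)\leq \cH(w)$; taking the supremum over such $\nu$ yields $\cH_i(w)\leq \cH(w)=\cH|_F(w)$. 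Combining both inequalities gives $\cH|_F=\Ec(\cH_1,\dots,\cH_t)$. I expect the main obstacle to be nothing deep but rather the careful bookkeeping around the ergodic decomposition and the ``no transitions between pieces'' structure of $X_F$: one must make sure that an invariant measure supported on $X_F$ genuinely splits as a convex combination of measures on the individual $X_i$ (rather than having ergodic components straddling several pieces), which is exactly what the transitivity-component decomposition in \eqref{sumtrans} and the cited Corollary 5.1.3 in \cite{Kit} guarantee.
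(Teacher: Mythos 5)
Your proposal is correct and follows essentially the same route as the paper's proof: both directions are obtained exactly as in the paper, namely $\Ec(\cH_1,\dots,\cH_t)\le\cH|_F$ from the minimality of the concave envelope together with the concavity of $\cH$ and the trivial domination $\cH_i\le\cH|_{F_i}$, and the reverse inequality by decomposing a measure supported on $X_F$ as a convex combination $\sum_i\lambda_i\mu_i$ with $\mu_i\in\cM_{f|_{X_i}}$ and using affinity of the entropy. The only cosmetic difference is that you take a supremum over $\cM_\Phi(w)$ where the paper invokes upper semi-continuity to pick an entropy-maximizing measure; both are fine.
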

\begin{proof}
Let $h=\Ec(\cH_1,\dots,\cH_t)$.
For $i=1,\dots,t$ let $F_i\eqdef\R(\Phi|_{X_i})$. Further, let $\EPer(F)=\{x\in \EPer(f): \rv(\mu_x)\in F\}$. It is a consequence of Theorem \ref{thmelemtentary} that $F=\conv(\rv(\EPer(F)))$. Since $X_F=X_1\cup \dots\cup X_t$
we have $\EPer(F)\subset X_1\cup \dots\cup X_t$. We conclude that $F\subset \conv(F_1\cup\dots\cup F_t)$. On the other hand, we
clearly have $F_i\subset F$ for all $i=1,\dots,t$ and since $F$ is convex we obtain that  $\conv(F_1\cup\dots\cup F_t)\subset F$. Hence, $h$ and $\cH|_F$ are both functions defined on $F$.

We need to show that $h=\cH|_F$. Since $\cH$ and therefore  in particular  $\cH|_F$ is a concave function, the statement $h\leq \cH|_F$ follows directly from the definition of the concave envelope. Let $w\in F$. We claim that $\cH(w)\leq h(w)$. Since the entropy map $\nu\mapsto h_{\nu}(f)$ is upper semi-continuous there exists $\mu\in 
\cM_{f|_{X_F}}$ with $h_\mu(f)=\cH(w)$ and $\rv(\mu)=w$. Using that  $X_F=X_1\cup \dots\cup X_t$ there exist $\mu_1,\dots,\mu_t$ with $\mu_i\in \cM_{f\vert_{X_i}}$,
and $\lambda_1,\dots,\lambda_t\geq 0$ with $\sum_{i=1}^t \lambda_i =1$
such that $\mu= \sum_{i=1}^t \lambda_i \mu_i$.  By definition, $\rv(\mu_i)\in F_i$ and $h_{\mu_i}(f)\leq\cH_i(\rv(\mu_i))$. Note that $w=\sum_{i=1}^t \lambda_i \rv(\mu_i)$. Using that $h$ is a concave function on $F$ we may conclude that $h(w)\geq \sum_{i=1}^t \lambda_i \cH_i(\rv(\mu_i))$. On the other hand, the fact that the entropy is affine shows that $\cH(w)=h_{\mu}(f)=\sum_{i=1}^t \lambda_i h_{\mu_i}(f)$.
This proves the claim and  the proof of the proposition is complete.
\end{proof}
We recall that for $k\in \bN_0\cup\{\infty,\omega\}$ a function $h:[a,b]\to\bR$ is piecewise $C^k$ if  there exists $a=c_0<c_1<\dots <c_\ell=b$ such that $h|_{(c_i,c_{i+1})}$ is $C^k$ for all $i=0,\dots, \ell-1$. Let $\Phi\in LC_k(X,\mathbb{R}^2)$. We say $\cH$ is piecewise $C^k$ on the boundary of $\R(\Phi)$ if there exist finitely many points $a_0,\dots,a_{\ell}=a_0 \in\partial \R(\Phi)$  such that $[a_i,a_{i+1}]$ is a line segment contained in a face $F_i$ of $\R(\Phi)$ and
$\cH\vert_{(a_i,a_{i+1})}$ is $C^k$.

We will use the following elementary fact.
\begin{lemma}\label{lemelef}
For $i=1,\dots,t+1$ let $h_i:D_i\to \bR$  be bounded functions with bounded domains. Then $\Ec(h_1,\dots, h_{t+1})=\Ec(\Ec(h_1,\dots, h_t),h_{t+1})$.
\end{lemma}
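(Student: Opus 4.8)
The plan is to verify both inequalities between $\Ec(h_1,\dots,h_{t+1})$ and $\Ec(\Ec(h_1,\dots,h_t),h_{t+1})$ directly from the defining property ``smallest concave majorant.'' First I would fix notation: write $g = \Ec(h_1,\dots,h_t)$ defined on $D' = \conv(D_1\cup\dots\cup D_t)$, write $H = \Ec(h_1,\dots,h_{t+1})$ defined on $D = \conv(D_1\cup\dots\cup D_{t+1})$, and write $G = \Ec(g,h_{t+1})$, which is also defined on $\conv(D'\cup D_{t+1}) = D$. So both candidate functions live on the same domain $D$, and it remains to show $H = G$ pointwise.

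For $G \le H$: I would check that $H$ is a concave function on $D$ with $H|_{D_i} \ge h_i$ for $i=1,\dots,t$ and $H|_{D_{t+1}} \ge h_{t+1}$ — the latter is immediate, and for $i \le t$ it is immediate as well. Since $g$ is by definition the \emph{smallest} concave function on $D'$ dominating $h_1,\dots,h_t$, and $H|_{D'}$ is a concave function on $D'$ dominating all of them, we get $H|_{D'} \ge g$. Thus $H$ is a concave function on $D$ dominating both $g$ and $h_{t+1}$, so by minimality of $G$ we conclude $G \le H$.

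For $H \le G$: here I would use that $G$ is concave on $D$, that $G|_{D_{t+1}} \ge h_{t+1}$, and that $G|_{D'} \ge g \ge h_i$ on $D_i$ for each $i=1,\dots,t$ (the last inequality being the defining property of $g$). Hence $G$ is a concave function on $D$ dominating $h_1,\dots,h_{t+1}$, so by minimality of $H$ we get $H \le G$. Combining the two inequalities yields $H = G$, which is the claim.

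The only genuine point requiring a little care — the ``main obstacle,'' such as it is — is the domain bookkeeping: one must confirm that $\conv(D'\cup D_{t+1})$ really equals $\conv(D_1\cup\dots\cup D_{t+1}) = D$, so that $G$ and $H$ are comparable as functions on the same set. This follows since $D_i \subset D' \subset \conv(D'\cup D_{t+1})$ for $i\le t$ and $D_{t+1}\subset\conv(D'\cup D_{t+1})$, giving $D\subset\conv(D'\cup D_{t+1})$, while the reverse inclusion holds because $D' \subset D$ and $D_{t+1}\subset D$ with $D$ convex. One should also note in passing that $g$ is bounded with bounded domain (so that $\Ec(g,h_{t+1})$ is defined in the sense of the stated definition), which is clear since the $h_i$ are bounded and $D'$ is bounded. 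Everything else is a two-line application of the universal ``smallest concave majorant'' property in each direction.
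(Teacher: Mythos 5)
Your argument is correct: the two-inequality verification via the ``smallest concave majorant'' property, together with the check that $\conv(D'\cup D_{t+1})=\conv(D_1\cup\dots\cup D_{t+1})$ so that both envelopes live on the same domain, is exactly the elementary argument the paper leaves to the reader. Nothing is missing.
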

The statement follows from an elementary induction argument and is left to the reader. Finally, we prove the following.

\begin{theorem}\label{thmpwanalytic}Let $f:X\to X$ be a transitive subshift of finite type and let
 $\Phi\in  LC_k(X,\mathbb{R}^2)$. Then the localized entropy function $\mathcal{H}$ is piecewise $C^1$ on  $\partial \R(\Phi)$.
Moreover, the non-differentiability points of $\cH|_{\partial \R(\Phi)}$  are rotation vectors of $k$-elementary periodic point measures.
\end{theorem}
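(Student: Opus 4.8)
The plan is to combine Proposition~\ref{concenv} with the one-dimensional structure of $\partial\R(\Phi)$ for $m=2$. By Theorem~\ref{thmelemtentary}, $\R(\Phi)\subset\bR^2$ is a polyhedron, so its boundary is a finite union of line segments (the edges of the polygon), and the vertices are rotation vectors of $k$-elementary periodic point measures. It therefore suffices to fix a single one-dimensional non-singleton face $F$ (an edge, or $\R(\Phi)$ itself when $\inn\R(\Phi)=\emptyset$) and show that $\cH|_F$ is piecewise $C^1$ with non-differentiability points among the rotation vectors of $k$-elementary periodic point measures; the finitely many vertices of $\R(\Phi)$ then account for the ``junctions'' between adjacent faces and, being $k$-elementary periodic point rotation vectors themselves, are also admissible non-differentiability points.

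So fix such a face $F$ and parametrize it by an interval $[a,b]\subset\bR$ via an affine isomorphism. By Proposition~\ref{concenv}, $\cH|_F=\Ec(\cH_1,\dots,\cH_t)$ where $\cH_i=\cH_{\Phi,f|_{X_i}}$ and $X_F=X_1\cup\dots\cup X_t$ is the decomposition into transitive subshifts of finite type. Each $F_i\eqdef\R(\Phi|_{X_i})$ is a polyhedron of dimension $n_i\le 1$, hence either a point or a subinterval of $[a,b]$; by Theorem~\ref{thmKW1}(iii), $\cH_i$ is real-analytic on $\ri F_i$. So each $\cH_i$ is a concave function that is real-analytic on the (relative) interior of the interval $F_i$. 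The one-variable fact I will use is: the concave envelope of finitely many concave functions $h_i$, each analytic on the interior of its defining interval $F_i$, is piecewise $C^1$ on $\conv(\bigcup F_i)$, and every non-differentiability point is an endpoint of some $F_i$. This is because the graph of the concave envelope is, over each maximal subinterval, either a straight segment (a supporting line touching two of the graphs, or one graph and a boundary point) or a piece of one of the analytic arcs $\cH_i$; a corner can only occur where we switch from one regime to another, i.e.\ at an endpoint of some $F_i$ or where an active analytic arc meets a chord — but at an interior point of an analytic concave arc the left and right derivatives agree, so a corner forces an endpoint of some $F_i$. I would prove this carefully by analyzing, for each point $w\in\ri F$, whether $w$ lies in the relative interior of a segment on which $\cH|_F$ is affine (where it is trivially $C^1$) or is exposed in the graph; in the latter case $\cH(w)=\cH_i(w)$ for the unique $i$ with $w\in\ri F_i$ and $\cH$ is real-analytic there.

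The key remaining point is to identify the endpoints of the $F_i$ as rotation vectors of $k$-elementary periodic point measures. Each $X_i$ is a transitive subshift of finite type, so by Theorem~\ref{thmelemtentary} applied to $f|_{X_i}$, the polyhedron $F_i=\R(\Phi|_{X_i})$ is the convex hull of $\rv(\{\mu_x:x\in\EPer_k(f|_{X_i})\})$; in the one-dimensional case its two endpoints are attained by such $k$-elementary periodic point measures of $f|_{X_i}$, which are in particular $k$-elementary periodic points of $f$ lying in $X_i\subset X_F$. Hence every candidate non-differentiability point — an endpoint of some $F_i$, or a vertex of $\R(\Phi)$ — is a rotation vector of a $k$-elementary periodic point measure, as claimed. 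To complete the argument I then invoke Lemma~\ref{lemelef} only if I prefer to build the envelope inductively two functions at a time; this is optional and the direct finite-$t$ analysis suffices.

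The main obstacle I expect is the one-variable envelope lemma: showing that $\Ec(\cH_1,\dots,\cH_t)$ has \emph{no} corners at interior points of the analytic arcs, and that there are only finitely many breakpoints. Concavity of each $\cH_i$ is essential here — it guarantees one-sided derivatives exist and are monotone, rules out oscillation, and makes the ``active arc'' structure well-defined — and so is analyticity, which guarantees that each arc $\cH_i$ can be tangent to a given chord only finitely often, bounding the number of switch points. I would isolate this as a clean self-contained lemma about concave functions on the line (stated for functions analytic on the interior of their domain intervals) and prove it via the supporting-line characterization of the concave envelope, then feed it $\cH_1,\dots,\cH_t$. Everything else — the reduction from $\partial\R(\Phi)$ to a single edge, and the identification of breakpoints with $k$-elementary periodic rotation vectors — is bookkeeping built on Theorems~\ref{thmelemtentary}, \ref{thmfaceshift}, \ref{thmKW1} and Proposition~\ref{concenv}.
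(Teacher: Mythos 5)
Your proposal follows essentially the same route as the paper: reduce to a single edge $F$ of the polygon, invoke Proposition~\ref{concenv} to write $\cH|_F=\Ec(\cH_1,\dots,\cH_t)$ with each $\cH_i$ concave and real-analytic on $\ri F_i$ by Theorem~\ref{thmKW1}, and then prove a one-variable envelope lemma whose corners can only sit at endpoints of the $F_i$, which by Theorem~\ref{thmelemtentary} applied to $f|_{X_i}$ are rotation vectors of $k$-elementary periodic point measures. The paper organizes the envelope lemma as an induction on $t$ via Lemma~\ref{lemelef} and a five-case analysis, but the substance (a touching analytic arc pinches the one-sided derivatives of the envelope, so a corner forces an endpoint of some $F_i$; extreme points of the graph must lie on some arc) is the same as yours. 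One caveat: your claim that analyticity bounds the number of switch points is neither needed nor clearly true, since two arcs can be tangent at points accumulating at an endpoint of some $F_i$; the paper instead explicitly admits infinitely many alternations between regimes (its Case 5) and proves differentiability at the accumulation point by a difference-quotient squeeze, after which concavity upgrades differentiability on an open interval to $C^1$ there. Your outline survives this correction, since your pinching argument applies verbatim at such an accumulation point.
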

\begin{proof}
Since $\partial \R(\Phi)$ is a polygon whose vertices are rotation vectors of elementary periodic point measures, it is sufficient to proof the statement for every face of $\R(\Phi)$. Let $F$ be such a face. If $F$ is a singleton then there is nothing to prove. Thus, we can assume
that $F$ is a non-trivial line segment.
Let $X_F=X_1\cup \dots \cup X_t$ be the decomposition of $X_F$ into transitive subshifts $X_i$ (see Equations \eqref{perface} and \eqref{sumtrans}). 
Further, let $F_i=\R(\Phi|_{X_i})\subset F$ and $\mathcal{H}_i= \mathcal{H}_{\Phi,f|_{X_i}}$.
We will prove the statement by induction for $t$.\\
If $t=1$ then $X_F$ is itself a transitive subshift of finite type. As a consequence of Theorem \ref{thmKW1} (see the discussion after the statement of Theorem \ref{thmKW1}) we obtain that $\cH|_F=\cH_1$ is real-analytic on the relative interior of $F$ and therefore in particular of class $C^1$.\\
$t\mapsto t+1:$  Applying Proposition \ref{concenv} and Lemma \ref{lemelef} we conclude that it is sufficient to consider the case $t=1$.   Assume that the assertions of the theorem hold for $\cH_1$. As before Theorem  \ref{thmKW1} implies that $\cH_2$ is real-analytic on  the relative interior of $F_2$. Moreover, the end points of the intervals $F_i$ are rotation vectors of $k$-elementary periodic point measures. Let $w_0\in \ri F$ such that $w_0$ does not coincide with a rotation vector of a $k$-elementary periodic point measure. In particular, $w_0$ is not an end point of the intervals $F_1$ and $F_2$. Since there are only finitely many $k$-elementary orbits, there exists a neighborhood $U\subset F$ of $w_0$ which does not contain a  rotation vector of a $k$-elementary periodic point measure. Hence, $\cH_i|_U\in C^1(U,\bR)$ for $i=1,2$. In what follows we only consider points in $U$. 
Further, whenever we refer to a line segment of the graph of $\cH$ we mean a non-trivial line segment of maximal length.
Our 
proof is based on the following two elementary observations.

Observation 1: If $(w_0,\cH(w_0))$ lies on a line segment of the graph of $\cH$ and $\cH_i(w_0)=\cH(w_0)$ then $D\cH_i(w_0)$ coincides with the slope of the line segment.

Observation 2: If $\cH(w_0)=\cH_1(w_0)=\cH_2(w_0)$ then $D\cH_1(w_0)=D\cH_2(w_0)$.\\
Observations 1 and 2 follow from the facts that $\cH_i|_U\in C^1(U,\bR)$ and that $\cH_i$ and $\cH$ are concave functions.   We leave the elementary proofs to the reader. To prove the theorem we need to show that $\cH|_F$ is $C^1$ at $w_0$. 
We shall consider several cases. \\
Case 1: The point $(w_0,\cH(w_0))$ belongs to the interior of a line segment of the graph of $\cH$. In this case $\cH$ is affine in a neighborhood of $w_0$ and is therefore  $C^1$.\\
Case 2: There exists a neighborhood $V\subset F$ of $w_0$ such that $\cH|_V=\cH_i|_V$ for some $i\in\{1,2\}$. This case is trivial since $\cH_i$ is $C^1$ on $U\cap V$.\\
 Case 3: The point $(w_0,\cH(w_0))$ is an  end point of two distinct line segments of the graph of $\cH$. Let $L_l$ denote the line segment on the left-hand side of $w_0$ and $L_r$ the line segment on the right-hand side of $w_0$.   We claim that this case can not occur. Otherwise, by maximality of the line segments (also using that $\cH$ is concave), the slope of $L_l$ must be strictly larger than the slope of $L_r$. We conclude that $(w_0,\cH(w_0))$ is an extreme point of the region bounded above by the graph of $\cH$.  Let $\mu\in \cM_{f|_{X_F}}$ with $\rv(\mu)=w_0$ and $h_\mu(f)=\cH(w_0)$, i.e., $\mu$ is a localized measure of maximal entropy at $w_0$. 
 It follows from an ergodic decomposition argument and the extreme point property of $(w_0,\cH(w_0))$ that we can assume that $\mu$ is ergodic. Hence, $\mu$ must be supported on $X_i$ for some $i=1,2$. We conclude that $\cH_i(w_0)=\cH(w_0)$. It now follows from Observation 1 that $D\cH_i(w)$ coincides with both, the slope of $L_l$ and
 the slope of $L_r$. But this is a contradiction and the claim is proven. \\
 Case 4: There is a neighborhood $V\subset F$ of $w$ on which the graph of $\cH$ coincides on one side of $w$ with a line segment and on the other side with the graph of $\cH_i$ for some $i=1,2$.  In this case $\cH$ restricted to $V\setminus\{w\}$  is clearly a $C^1$-function. Moreover, Observation 1 shows that $\cH$ is  $C^1$ on $V$.\\
 Case 5: $\cH_1(w_0)=\cH_2(w_0)=\cH(w_0)$ and neither of the Cases 1-4 applies. First note that Cases 1-5 actually cover all possibilities. Indeed, if $\cH(w)>\max\{\cH_1(w_0),\cH_2(w_0)\}$  then   $(w,\cH(w))$ must lie in the interior of a line segment of the graph of $\cH$ which is Case 1. Moreover, if $\cH_1(w_0)<\cH_2(w_0)=\cH(w_0)$ or $\cH_2(w_0)<\cH_1(w_0)=\cH(w_0)$ then one of the Cases 1,2 or 4 applies. Next we show that $\cH$ is 
 differentiable at $w_0$. By Observation 2,  $D\cH_1(w_0)=D\cH_2(w_0)$. Clearly, the previous cases fail either on the left-hand side of $w_0$ or on the right-hand of $w_0$ or on both sides. Without loss of generality we only consider the left-hand side case. The right-hand side case is analogous. Since Cases 1-4 do not apply in any neighborhood $V\subset F$ of $w_0$ there exist $\epsilon>0$ and a strictly increasing sequence $(w_n)_n\subset (w_0-\epsilon,w_0)$ with $\lim w_n =w_0$ such that $\cH(w_n)=\cH_1(w_n)$ for $n$ even and $\cH(w_n)=\cH_2(w_n)$ for $n$ odd. We note that this includes the possibility of $\cH_1(w_i)\not=\cH_2(w_i)$ for $i=n,n+1$ in which case
 the graph of  $\cH$ must contain a line segment between the points $(w_n,\cH(w_n))$ and $(w_{n+1},\cH(w_{n+1}))$.
 Let $w\in [w_1,w_0)$ and let $N\in \bN$ such that $w\in [w_N,w_{N+1})$. Using that $\cH$ is concave, we conclude 
 \begin{equation}\label{eqdiffquo}
 \frac{\cH(w)-\cH(w_0)}{w-w_0}\in \left[\frac{\cH(w_{N+1})-\cH(w_0)}{w_{N+1}-w_0}, \frac{\cH(w_{N})-\cH(w_0)}{w_{N}-w_0}\right].
 \end{equation}
 Clearly both difference quotients on the right-hand side of Equation \eqref{eqdiffquo} converge to $D\cH_1(w_0)=D\cH_2(w_0)$ as $N\to \infty$. This implies that 
 \[
 \lim_{w\to w_0-}  \frac{\cH(w)-\cH(w_0)}{w-w_0}=D\cH_i(w_0).\]
 It now follows from either applying the same argument to the right-hand side of $w_0$ or from Cases 1-4 that $\cH$ is differentiable at $w_0$ with
 $D\cH(w_0)=D\cH_1(w_0)=D\cH_2(w_0)$. Let $w\in V$. From previous arguments (including Cases 1-4) we know that $\cH$ is differentiable at $w$. Moreover, we either have $D\cH(w)=D\cH_i(w)$ for some $i=1,2$, or $(w,\cH(w))$ lies on a line segment of the graph of $\cH$ with one endpoint of this line segment being closer to $w_0$ as $w$. Finally, since $D\cH(w_0)=D\cH_1(w_0)=D\cH_2(w_0)$ we obtain $\lim_{w\to w_0}D\cH(w)=D\cH(w_0)$. Thus $\cH$ is $C^1$ near $w_0$.
 \end{proof}
 \begin{remark}
 One might ask if the proof of Theorem \ref{thmpwanalytic} can be extended to show that $\cH$ is piecewise analytic on $\partial \R(\Phi)$. In fact, piecewise analyticity holds provided the graph of $\cH$ contains only finitely many line segments. We are not aware of an obstruction to infinitely many lines segments. While we do not have an actual example, we believe that there do exist locally constant $2$-dimensional functions such that $\cH$ is not piecewise
 analytic on  $\partial \R(\Phi)$.   
\end{remark}
\appendix
\section{Applications of Theorem A}
In this appendix we apply Theorem A to various examples. In addition, we  use the results in Section 2.4  and the formula for the Parry measure, see e.g. \cite{PP}.
In order to keep the examples of reasonable size we consider potentials that are constant on cylinders of length $2$.  We use the notation $\phi\vert_{\cC_2(x)}=\phi_{x_1,x_2}$ and identify $\phi$ with a $d\times d$-matrix $M_\phi=(\phi_{i,j})_{0\leq i,j\leq d-1}$.
\subsection{The 2-symbol full-shift}
\begin{example}\label{example1} 
Let $X=\{0,1\}^\bN$ and let $f:X\to X$ be the shift map.  One can readily check that $\EPer^2(f)$ coincides with the orbits of $\Or(0), \Or(1), \Or(01), \Or(110),$ $\Or(001)$ and $\Or(0011)$. This leads to the following cases:
\begin{enumerate}
\item[\rm{(1)}] If $\phi_{0,0}>\max\{\phi_{1,1},\frac{1}{2}\left(\phi_{0,1}+\phi_{1,0}\right)\}$, then $\mu_{\infty,\phi}=\mu_{\Or(0)}$ and $h_{\mu_{\infty,\phi}}(f)=0$;
\item[\rm{(2)}] If $\phi_{1,1}>\max\{\phi_{0,0},\frac{1}{2}\left(\phi_{0,1}+\phi_{1,0}\right)\}$, then $\mu_{\infty,\phi}=\mu_{\Or(1)}$ and $h_{\mu_{\infty,\phi}}(f)=0$;
\item[\rm{(3)}] If $\frac{1}{2}\left(\phi_{0,1}+\phi_{1,0}\right)>\max\{\phi_{0,0},\phi_{1,1}\}$, then $\mu_{\infty,\phi}=\mu_{\Or(01)}$ and $h_{\mu_{\infty,\phi}}(f)=0$;

\item[\rm{(4)}] If $\phi_{0,0}=\phi_{1,1}=\frac{1}{2}\left(\phi_{0,1}+\phi_{1,0}\right)$, then $\phi$ is cohomologous to zero and $\mu_{\infty,\phi}=\mu_{\rm mme}$ is the unique measure of maximal entropy of $f$, i.e. the Bernoulli measure with weight $1/2$. In particular, $h_{\mu_{\infty,\phi}}(f)=\log 2$;
\item[\rm{(5)}] If $\phi_{0,0}=\frac{1}{2}\left(\phi_{0,1}+\phi_{1,0}\right)>\phi_{1,1}$, then $\mu_{\infty,\phi}$ is the unique measure of maximal entropy of the subshift of finite type with transition matrix $A=
\begin{pmatrix}
1&1\\
1&0
\end{pmatrix}$ and $h_{\mu_{\infty,\phi}}(f)=\log \tau$, where $\tau=\frac{1+\sqrt{5}}{2}$ is the Golden Mean;
\item[\rm{(6)}] If $\phi_{1,1}=\frac{1}{2}\left(\phi_{0,1}+\phi_{1,0}\right)>\phi_{0,0}$, then $\mu_{\infty,\phi}$ is the unique measure of maximal entropy on the subshift of finite type with transition matrix $A=
\begin{pmatrix}
0&1\\
1&1
\end{pmatrix}$ and $h_{\mu_{\infty,\phi}}(f)=\log \tau$;
\item[\rm{(7)}] If $\phi_{0,0}=\phi_{1,1}>\frac{1}{2}\left(\phi_{0,1}+\phi_{1,0}\right)$, then $\mu_{\infty,\phi}=\frac{1}{2}\left(\mu_{\Or(0)}+\mu_{\Or(1)}\right)$ and $h_{\mu_{\infty,\phi}}(f)=0$;

\end{enumerate}
\end{example}
We note that Example 1 was previously treated with different methods in \cite{Br}.
The cases {\rm (5)} and {\rm(6)} correspond to the so-called Golden Mean shift. By \cite{PP}, the measure with the maximal entropy is the Markov measure with probability vector $\begin{pmatrix}
\tau^2/(1+\tau^2)\\
1/(1+\tau^2)
\end{pmatrix}$ and transition matrix $\begin{pmatrix}
1/\tau&1/\tau^2\\
1&0
\end{pmatrix}$  and the analogous measure in (6). We refer to \cite{Br} and \cite[Proposition 17.14]{DGS}  for further details.

\subsection{The 3-symbol full-shift}

Let $X=\{0,1,2\}^{\mathbb{N}}$ and let $f:X\to X$ be the shift map. Let $\phi\in LC_2(X,\bR)$. In Table 1 below we list the $2$-elementary periodic orbits of $f$ grouped by periods. Together $f$ has $148$ periodic orbits that are $2$-elementary. 
Based on the data in Table 1 we  are able to characterize several of the zero-temperature measures as well as the associated components (i.e. the convex cones) of potentials (see Theorem A).

\begin{example}{\rm (Unique maximizing elementary periodic points as zero-temp-erature measures.)}
\label{example2a}Let $X=\{0,1,2\}^{\mathbb{N}}$ and let $f:X\to X$ be the shift map. Let $\phi\in LC_2(X,\bR)$. Then there are  only $8$ elementary periodic point measures 
 that  occur as zero-temperature measures in the classification in Theorem A (a), i.e.  $\ell_1=8$. Namely, we  have the following possibilities:
\begin{enumerate}
\item[\rm{(1)}] If $\phi_{0,0}>\mu_x(\phi)$ for all $x\in \EPer^2(f)\setminus\{\Or(0)\}$, then $\mu_{\infty,\phi}=\mu_{\Or(0)};$
\item[\rm{(2)}] If $\phi_{1,1}>\mu_x(\phi)$ for all $x\in \EPer^2(f)\setminus\{\Or(1)\}$, then $\mu_{\infty,\phi}=\mu_{\Or(1)};$
\item[\rm{(3)}] If $\phi_{2,2}>\mu_x(\phi)$ for all $x\in \EPer^2(f)\setminus\{\Or(2)\}$, then $\mu_{\infty,\phi}=\mu_{\Or(2)};$
\item[\rm{(4)}] $\frac{1}{2}\left(\phi_{0,1}+\phi_{1,0}\right)>\mu_x(\phi)$ for all $x\in \EPer^2(f)\setminus\{\Or(01),\Or(10)\}$, then $\mu_{\infty,\phi}=\mu_{\Or(01)};$
\item[\rm{(5)}] If $\frac{1}{2}\left(\phi_{1,2}+\phi_{2,1}\right)>\mu_x(\phi)$ for all $x\in \EPer^2(f)\setminus\{\Or(12),\Or(21)\}$, then $\mu_{\infty,\phi}=\mu_{\Or(12)};$
\item[\rm{(6)}] If $\frac{1}{2}\left(\phi_{0,2}+\phi_{2,0}\right)>\mu_x(\phi)$ for all $x\in \EPer^2(f)\setminus\{\Or(02),\Or(20)\}$, then $\mu_{\infty,\phi}=\mu_{\Or(02)};$
\item[\rm{(7)}] If $\frac{1}{3}(\phi_{1,0}+\phi_{02}+\phi_{2,1})>\mu_x(\phi)$ for all $x\in \EPer^2(f)\setminus\{\Or(102),\Or(021),\Or(210)\}$, then $\mu_{\infty,\phi}=\mu_{\Or(102)};$
\item[\rm{(8)}] If $\frac{1}{3}(\phi_{1,2}+\phi_{2,0}+\phi_{0,1})>\mu_x(\phi)$ for all $x\in \EPer^2(f)\setminus\{\Or(120),\Or(201),\Or(012)\}$, then $\mu_{\infty,\phi}=\mu_{\Or(120)};$
\end{enumerate}
\end{example}

{\rm
\begin{center}
\begin{tabular}{ |c|c| } 
 \hline
 \text{Period} &  \text{Generating segments of 2-elementary periodic orbits } \\ 
 \hline
\multirow{1}{1em}{1}
 & 0;1;2 \\ 
 \hline
\multirow{1}{1em}{2}
 & 10;12; 20  \\ 
 \hline
\multirow{1}{1em}{3}
& 100;101; 102; 112; 120; 122; 200; 202 \\
 \hline
 \multirow{2}{1em}{4} &1001;
  1002;
  1012;
  1020;
  1021;
  1022;
  1120;
  1122;\\
 & 1200;
  1202;
  1220;
  2002 \\
  \hline
  \multirow{3}{1em}{5}&
   10012;10020;10021;10022;10112;10121;10122;10200;\\
   &10201;10220;10221;11200;11202;11220;12002;12022;\\
   &12200;12202\\
   \hline
   \multirow{4}{1em}{6}&
100112;
  100121;
100201;
102001;
100220;
  102200;\\
  &
 101122;
    101221;
  101202;102012;
   120022;
  122002;\\
  &
  112022;
  112202;
  102201;
    100122;
   100221;
    102120;\\
    &
  112002;
    112200\\
     \hline
   \multirow{4}{1em}{7}&
 1001122;1001221;
  1002201;
  1022001;
   1120022;
  1122002;\\
 & 1001202;
  1002012;
  1002120;
  1012002;
  1020012;
  1021200;\\
  &
  1011202;
    1020121;
  1021120;
  1021201;
    1020112;
    1012021;\\
    &
 1012202;
  1021220;
  1020122;
  1012022;
  1022012;
  1022120\\
   \hline
   \multirow{6}{1em}{8}&
   10011202;
  10012021;
   10020112;
   10020121;
    10021120;
   10021201;\\
   &
  10112002;
    10120021;
    10200112;
      10200121;
        10211200;
          10212001;\\&
  10012022;
  10012202;
  10020122;
  10021220;
  10022012;
  10022120;\\
  &
  10120022;
  10122002;
  10200122;
  10212200;
  10220012;
 10221200;\\&
  10201122;
   10211220;
     10112202;
     10120221;
    10122021;
  10201221;\\
  &
   10220112;
   10220121;
  10221120;
  10221201;
  10112022; 
    10212201
  \\
 \hline
 \multirow{5}{1em}{9}&
100112022;
  100112202;
  100120221;
  100122021;
  100201122;\\
  &
  100201221;
  100211220;
  100212201;
  100220112;
  100220121;\\
  &
  100221120;
  100221201;
  101120022;
  101122002;
  101200221;\\
  &
  101220021;
  102001122;
  102001221;
  102112200;
  102122001;\\
  &
  102200112;
  102200121;
  102211200;
  102212001\\
  \hline
\end{tabular}
\\[0.5cm]
 {\bf Table 1.} The 2-elementary periodic orbits of $f$ (one generating segment per orbit).
\end{center}
}
\noindent

To see that the cases (1)-(8) in Example \ref{example2a} cover all possibilities in Theorem A (a) we notice that for any other $2$-elementary periodic point $x$ and any $\phi\in LC_2(X,\bR)$ one can 
represent $\mu_x(\phi)$ as a weighted convex combination of  the $\mu_{x_1}(\phi),\dots,\mu_{x_8}(\phi)$. For example, if $x=\Or(12202)$, then 
\begin{eqnarray*}
\mu_x(\phi)&=&\frac{1}{5}\left(\phi_{1,2}+\phi_{2,2}+\phi_{2,0}+\phi_{0,2}+\phi_{2,1}\right)\\
&=&\frac{1}{5}\left(\mu_{\Or(2)}(\phi)+2\mu_{\Or(12)}(\phi)+2\mu_{\Or(02)}(\phi)\right).
\end{eqnarray*}
This shows that if $\mu_x$  is a $\phi$-maximizing measure then so are $\mu_{\Or(2)},\mu_{\Or(12)}$ and $ \mu_{\Or(02)}$. By going through the list of points  in Table 1 one  can exclude the remaining $2$-elementary 
periodic point measures. It follows from Theorem A that the sets of functions defined in each of the cases (1)-(8) are open convex cones  whose union is  dense in $LC_2(X,\bR)$.

\begin{example}{\rm (Maximal entropy measures of non-discrete subshifts of finite type as zero-temperature measures.)}\label{example2b}
Consider the  subshift  $\sigma:X_A\rightarrow X_A$ with transition matrix 
\[A=\begin{pmatrix}
1&0&1    \\
0&1&1\\
1&1&1
\end{pmatrix}\]
and a function $\phi$ which is constant on $2$-cylinders. Since $\sigma\vert_{X_A}$ is transitive, it has a unique measure  $\mu=\mu_A$ of maximal entropy. Moreover, $h_\mu(f)=\log\left(1+\sqrt{2}\right)$. By \cite{PP}, $\mu$ is a Markov measure with probability vector $p_A$ and transition matrix $P_A$ given by  $$p_A=\begin{pmatrix}
\frac{1}{4}\\
\frac{1}{4}\\
\frac{1}{2}
\end{pmatrix}\, \text{ and }\, \, P_A=
 \begin{pmatrix}
\sqrt{2}-1&0&1-\frac{\sqrt{2}}{2}\\
0&\sqrt{2}-1&1-\frac{\sqrt{2}}{2}\\
2-\sqrt{2}&2-\sqrt{2}&\sqrt{2}-1
\end{pmatrix}.$$  
 If
\begin{eqnarray*}
&\phi_{0,0}=\phi_{1,1}=\phi_{2,2}=\frac{1}{2}\left(\phi_{0,2}+\phi_{2,0}\right)=\frac{1}{2}\left(\phi_{1,2}+\phi_{2,1}\right)\\
>&\max\left\{ \frac{1}{2}\left(\phi_{0,1}+\phi_{1,0}\right),\frac{1}{3}(\phi_{1,0}+\phi_{02}+\phi_{2,1}), \frac{1}{3}(\phi_{1,2}+\phi_{2,0}+\phi_{0,1})   \right\},
\end{eqnarray*}
then the zero-temperature measure $\mu_{\infty,\phi}=\mu$  . Similar results can be stated for the other $2$-cylinder transitive subshifts of finite type. 

\end{example}
 
Example \ref{example2b} illustrates an application of  Theorem A (b) to derive zero-temperature measures that maximize entropy on certain non-discrete transitive subshifts of finite type. 


\begin{example}{\rm (Multiple ergodic component zero-temperature measures.)}\label{example2c}\\
Let $X=\{0,1,2\}^{\mathbb{N}}$ and let $f:X\to X$ be the shift map. Let $\phi\in LC_2(X,\bR)$. If $\phi$ satisfies
\begin{equation}\label{eqex2}
\phi_{0,0}=\phi_{1,1}=\phi_{2,2}>\mu_x(\phi)
\end{equation}
for all $x\in \EPer^2(f)\setminus\{\Or(0),\Or(1),\Or(2)\} $, then by Theorem A (c),
\begin{equation}\label{eqex3}
\mu_{\infty,\phi}=a_{1,\phi}\mu_{\Or(0)}+a_{2,\phi}\mu_{\Or(1)}+a_{3,\phi}\mu_{\Or(2)},\end{equation}
where $0\leq a_{1,\phi},a_{2,\phi},a_{3,\phi}\leq 1$ and $a_{1,\phi}+a_{2,\phi}+a_{3,\phi}=1$. Moreover, there are only finitely many choices for the coefficients $a_{j,\phi}$. In the following we consider 3 particular cases and apply a direct computation method, see \cite{Je3}:



Case 1: We consider the potential $\phi_1$  given by
\[M_{\phi_1}=\begin{pmatrix}
4&0&0    \\
1&4&0\\
1&0&4
\end{pmatrix}.\]
By \cite{PP}, $\mu_{t\phi_1}$ is a Markov measure defined by the  probability vector $$p_t=\left(\frac{-1+\sqrt{1+8e^t}}{2\sqrt{1+8e^t}},\frac{1+\sqrt{1+8e^t}}{4\sqrt{1+8e^t}}, \frac{1+\sqrt{1+8e^t}}{4\sqrt{1+8e^t}}\right)^{\top}$$
 and stochastic matrix  
\[P_{t}=\frac{1}{\beta(e^t)}\begin{pmatrix}
2e^{4t}& -1+\sqrt{1+8e^t}&-1+\sqrt{1+8e^t}\\
\frac{4e^t}{-1+\sqrt{1+8e^t}}&2e^{4t}&2\\
\frac{4e^t}{-1+\sqrt{1+8e^t}}&2&2e^{4t}
\end{pmatrix},
\]
where $\beta(e^t)=1+2e^{4t}+\sqrt{1+8e^t}$. Taking the limit $t\rightarrow\infty$, we obtain
\[P_t\rightarrow \begin{pmatrix}1&0&0\\
0&1&0\\
0&0&1
\end{pmatrix}, \text{ and } p_t\rightarrow \begin{pmatrix}\frac{1}{2}\\\frac{1}{4}\\\frac{1}{4}
\end{pmatrix}.\]
It follows that $\mu_{\infty\cdot \phi_1}=\frac{1}{2}\mu_{\Or(0)}+\frac{1}{4}\mu_{\Or(1)}+\frac{1}{4}\mu_{\Or(2)}$.

Case 2: We consider the potential $\phi_2$  given by
\[M_{\phi_2}=\begin{pmatrix}
4&0&0    \\
0&4&0\\
0&0&4
\end{pmatrix}.\]
By \cite{PP} and a similar argument as in Case 1, we obtain  $$\mu_{\infty,\phi_2}=\frac{1}{3}\left(\mu_{\Or(0)}+\mu_{\Or(1)}+\mu_{\Or(2)}\right).$$

Case 3: Finally, we consider the potential $\phi_3$  given by
\[M_{\phi_3}=\begin{pmatrix}
4&0&0    \\
1&4&0\\
0&0&4
\end{pmatrix}.\]
By \cite{PP}, (see also \cite{Je3} for a similar example), one  can show that  $\mu_{\infty,\phi_3}=\frac{1}{2}\left(\mu_{\Or(0)}+\mu_{\Or(1)}\right)$.
In particular, precisely two of the coefficients $a_{j,\phi_3
}$ in \eqref{eqex3} are non-zero.

\end{example}

Cases (1)-(3) in Example \ref{example2c} illustrate certain possibilities for zero-temperature measures that are convex combinations of periodic  point measures. In particular, we show the possibility of non-symmetric coefficients as well as of one coefficient being zero.  We refer to \cite{Je} for further examples and to \cite{CGU} for the case of irrational coefficients for the $5$-symbol full shift. Besides these discrete applications of Theorem A (c), there are non-discrete applications in the literature with one of the coefficients $a_{j,\phi}$ being zero   \cite{Je3}.


\section{Examples related to Theorem B}
In this appendix we construct examples that illustrate Theorem B. We first establish in  Example \ref{example3}   the possibility
of a line segment face $F$ with a non-analytic entropy function $\cH\vert_{{\rm ri} F}$. Finally, we describe how to refine  Example \ref{example3} to establish that  $\cH\vert_{{\rm ri} F}$ is in general not even a $C^1$ function, see Example \ref{example44}.

\begin{example}\label{example3}
Let $\cA=\{0,\dots,5\}$ and $f=f_A:X_A\to X_A$ be the one-sided subshift of finite type  with alphabet $\cA$ and transition matrix $A$, where $A$ is defined by
\[
A=
\begin{pmatrix}
1&1&1&0&0&0\\
1&1&1&0&0&0\\
1&1&1&1&1&1\\
0&0&0&1&1&1\\
0&0&0&1&1&1\\
1&1&1&1&1&1
\end{pmatrix}.
\]
It follows from the definition of $A$ that $f$ is transitive.  Let $w_1,w_2,w_3\subset \bR^2$ defined by $w_1=(0,0), w_2=(1,0)$ and $w_3=(1/2,1)$ and let $\Delta$ denote the polyhedron (i.e. triangle) with vertices $w_1,w_2,w_3$. We define the function $\Phi:X_A\rightarrow\mathbb{R}^2$ by
\begin{equation}\label{defpotphi}
\Phi(\xi)=\begin{cases}
w_1& {\rm if}\,\,   \xi\in \cC_2(00)\cup \cC_2(01)\cup C_2(10)\cup C_2(44)\\
 w_2& {\rm if}\,\,   \xi\in \cC_2(33)\cup \cC_2(34)\cup C_2(43)\cup C_2(11)\\                      
                       w_3 & \,\,\,\,\,  \text{otherwise}.
            \end{cases}
\end{equation}
\end{example}
It follows from the construction that $\Phi$ is constant on cylinders of length $2$. Let $F$ be the line segment with endpoints $w_1$ and $w_2$.
\begin{theorem}
Let $f$ and $\Phi$ be as in Example \ref{example3}. Then $\R(\Phi)=\Delta$ and $\cH$ is not analytic on the interior of the face $F$.
\end{theorem}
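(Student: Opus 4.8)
The plan is to verify the two claims separately: first that $\R(\Phi)=\Delta$, then that $\cH|_{\ri F}$ fails to be analytic. For the rotation set, by Ziemian's Theorem (Theorem \ref{thmelemtentary}) it suffices to compute the rotation vectors of the $2$-elementary periodic points of $f$. The fixed points $\Or(0)$ and $\Or(4)$ have rotation vector $w_1$, and $\Or(3)$ and $\Or(1)$ have rotation vector $w_2$, while any periodic orbit that ever uses symbol $2$ or $5$ (or any transition not listed among the ``special'' $2$-cylinders in \eqref{defpotphi}) picks up a positive amount of $w_3$. Since the convex hull of $\{w_1,w_2,w_3\}$ is $\Delta$, we get $\R(\Phi)\subset\Delta$; and since $w_1,w_2,w_3$ are each realized by invariant measures (the periodic point measures just named, together with a measure giving rotation vector $w_3$, e.g.\ $\mu_{\Or(25)}$ or a similar orbit through symbols $2,5$), we conclude $\R(\Phi)=\Delta$. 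This also identifies $F=\conv(w_1,w_2)$ as a genuine edge of the polyhedron.

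Next I would analyze the sub-shift $X_F$. By Theorem \ref{thmfaceshift}, $X_F$ consists exactly of the measures with rotation vector in $F$, i.e.\ those supported on concatenations of the generating segments of the $k$-elementary periodic points whose rotation vectors lie in $F$. The key structural point, read off from the matrix $A$, is that $X_F$ decomposes as $X_1\cup X_2$ where $X_1$ is the sub-shift of finite type on symbols $\{0,1\}$ (block structure of the upper-left $2\times2$ block, contributing $w_1$ from $00,01,10$ and $w_2$ from $11$) and $X_2$ is the sub-shift on symbols $\{3,4\}$ (lower-right block, contributing $w_2$ from $33,34,43$ and $w_1$ from $44$). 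Crucially there is \emph{no} transition between $\{0,1\}$ and $\{3,4\}$ without passing through $2$ or $5$, and any such passage leaves $F$; so $X_F=X_1\sqcup X_2$ is exactly the decomposition \eqref{sumtrans} with $t=2$, and each $f|_{X_i}$ is a transitive SFT of positive entropy ($\log\frac{1+\sqrt5}{2}$, the golden mean, by the design of the $2\times2$ blocks). Identifying the induced one-dimensional potential on each $X_i$ via the affine parametrization of $F$ by $\bR$, one finds that $\cH_1$ and $\cH_2$ are two distinct real-analytic concave functions on $F$ (they are ``mirror images'' of each other under $x\mapsto 1-x$, because $X_2$ is $X_1$ with the roles of $w_1,w_2$ swapped), each with a strict interior maximum, and the two graphs cross transversally at the midpoint $w=(1/2,0)$.

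By Proposition \ref{concenv}, $\cH|_F=\Ec(\cH_1,\cH_2)$. Since $\cH_1$ and $\cH_2$ are distinct analytic functions that cross transversally and each is maximized at an interior point, their concave envelope agrees with $\max(\cH_1,\cH_2)$ near the crossing point $w=(1/2,0)$ (the envelope has no need to insert a bridging line segment there because $\max(\cH_1,\cH_2)$ is already concave in a neighborhood of the crossing, both functions being concave with opposite-signed derivatives there), and $\max$ of two analytic functions crossing with distinct slopes has a corner: $\cH|_F$ has a jump in its derivative at $w=(1/2,0)$. Hence $\cH|_F$, while piecewise analytic and continuous, is not analytic (indeed not $C^1$) on $\ri F$. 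The main obstacle I anticipate is the careful bookkeeping in the previous paragraph: pinning down the exact induced one-dimensional potentials on $X_1$ and $X_2$ and verifying rigorously that the two analytic branches cross with unequal derivatives (equivalently, that neither branch dominates the other throughout $\ri F$), since this transversal crossing is precisely what produces the non-analyticity; one must also check that the envelope does not smooth out the corner by inserting a tangent line, which here is ruled out by concavity of $\max(\cH_1,\cH_2)$ near the crossing.
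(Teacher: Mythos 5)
Your computation of $\R(\Phi)=\Delta$ and your identification of $X_F$ as the disjoint union of the two SFTs on $\{0,1\}$ and on $\{3,4\}$ are correct (modulo a slip: the relevant $2\times 2$ blocks of $A$ are full, so each $f|_{X_i}$ is conjugate to the full $2$-shift and has entropy $\log 2$, not $\log\frac{1+\sqrt5}{2}$). The fatal problem is in the last step. You claim that near the symmetric crossing of $\cH_1$ and $\cH_2$ at the midpoint of $F$ the concave envelope coincides with $\max(\cH_1,\cH_2)$ and therefore has a corner. The geometry is exactly backwards: $\cH_1$ attains its maximum $\log 2$ at $v_1=(1/4,0)$ (the Bernoulli measure gives the cylinder $11$ mass $1/4$) and $\cH_2$ at $v_2=(3/4,0)$, so at the crossing the dominant branch on the left ($\cH_1$) is \emph{decreasing} and the dominant branch on the right ($\cH_2$) is \emph{increasing}. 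Hence $\max(\cH_1,\cH_2)$ has a local minimum, i.e.\ a \emph{convex} kink, at the midpoint; it is not concave there, and the concave envelope is forced to bridge over it. Indeed, since $\cH|_F\le h_{\rm top}(f|_{X_F})=\log 2$ and $\cH$ is concave with $\cH(v_1)=\cH(v_2)=\log 2$, one gets $\cH\equiv\log 2$ on the whole segment $[v_1,v_2]$. The resulting function is $C^1$ on $\ri F$ --- the paper records this explicitly in Remark \ref{remarkwer}(a) --- so the derivative jump you rely on does not exist, and your mechanism for non-analyticity collapses.

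The conclusion is still true, but for a different reason, which is the paper's actual argument: $\cH|_F$ is constant equal to $\log 2$ on the proper subinterval $[v_1,v_2]$, yet $\cH(w)<\log 2$ for $w\in F\setminus[v_1,v_2]$ (any measure with such a rotation vector is a convex combination $\lambda_1\nu_1+\lambda_2\nu_2$ with $\nu_i\in\cM_{f|_{X_{A_i}}}$ in which some $\nu_i\neq\mu_i$ carries positive weight, so affinity of entropy gives $h_\nu(f)<\log 2$). A function that is constant on a nontrivial subinterval but not globally constant cannot be real-analytic on $\ri F$, by the identity theorem. If you want to keep your envelope-based framework via Proposition \ref{concenv}, the correct statement is that $\Ec(\cH_1,\cH_2)$ \emph{does} insert a bridging (here horizontal) segment over $[v_1,v_2]$, and the non-analyticity occurs at $v_1$ and $v_2$ where the analytic branches $\cH_i$ are glued $C^1$-smoothly (their derivatives vanish at their maxima) to that flat segment --- not at the midpoint.
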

\begin{proof}
Since $\conv(\Phi(X_A)) =\Delta$ and since for each $i=1,2,3$ the set $\Phi^{-1}(\{w_i\})$ contains a fixed point, the statement $\R(\Phi)=\Delta$ follows from the convexity of $\Delta$. Moreover, $F$ is a face of $\R(\Phi)$. Suppose $x=(x_n)$ is a periodic point with $\mu_x\in F$. It follows from the definition of $\Phi$ that $x_n\in \{0,1,3,4\}$
for all $n\geq 1$. 
Define
\[
A_1=
\begin{pmatrix}
1&1&0&0&0&0\\
1&1&0&0&0&0\\
0&0&0&0&0&0\\
0&0&0&0&0&0\\
0&0&0&0&0&0\\
0&0&0&0&0&0
\end{pmatrix}
\, \, \, {\rm and}\,\,\,
A_2=
\begin{pmatrix}
0&0&0&0&0&0\\
0&0&0&0&0&0\\
0&0&0&0&0&0\\
0&0&0&1&1&0&\\
0&0&0&1&1&0&\\
0&0&0&0&0&0
\end{pmatrix}
\]
It follows that $X_F=X_{A_1}\cup X_{A_2}$. Moreover, each $f|_{X_{A_i}}$ is a transitive subshift that is conjugate to the full-shift in 2 symbols. This shows that $\htop(f|_{X_{A_i}})=\log 2.$ Hence, $\htop(f|_{X_F})=\log 2$.
Let $\mu_i$ denote the unique measure of maximal entropy for $f|_{X_{A_i}}$, that is, the unique invariant measure $\mu_i$ on $X_{A_i}$ satisfying $h_{\mu_i}(f)=\log 2$. It follows that $\mu_i$ is the Bernoulli measure that assigns each cylinder of length 2 the measure $1/4$. We obtain by computation  that $v_1\eqdef\rv(\mu_1)=(1/4,0)$ and $v_2\eqdef\rv(\mu_2)=(3/4,0)$. Since $\cH$ is concave and $\cH|_F\leq \htop(f|_{X_F})=\log 2$ we may conclude that 
$\cH|_{[v_1,v_2]}\equiv \log 2$. Here $[v_1,v_2]$ denotes the closed line segment with endpoints $v_1$ and $v_2$. It is a consequence of the identity theorem for analytic functions that if $\cH$ were analytic on $\inn F$ then $\cH$ must be constant $\log 2$ on $F$.
We claim that $\cH(w)<\log 2$ for all $w\in F\setminus [v_1,v_2]$. Let $w\in F\setminus [v_1,v_2]$.
By symmetry it is sufficient to consider the case $w\in [0,1/4)$. Since the entropy map $\nu\mapsto h_{\nu}(f)$ is upper semi-continuous there exists $\nu\in \cM_{f|_{X_F}}$ with $\rv(\nu)=w$ and $h_\nu(f)=\cH(w)$.
Using that $X_F=X_{A_1}\cup X_{A_2}$ we can write $\nu=\lambda_1 \nu_1+\lambda_2\nu_2$ for some  $\nu_i\in \cM_{f|_{X_{A_i}}}$ and $\lambda_i\in [0,1]$ with $\lambda_1+\lambda_2=1$. If both, $\nu_1=\mu_1$ and $\nu_2=\mu_2$ then $\rv(\nu)\in [v_1,v_2]$ which is a contradiction. Thus, there must exist $i\in\{1,2\}$ with $\lambda_i>0$ and $\nu_i\not=\mu_i$; in particular $h_{\nu_i}(f)<\log 2$. Using that the entropy is affine, we conclude $\cH(w)=h_{\nu}(f)<\log 2$ which proves the claim. We obtain that $\cH$ is not analytic on $\ri F$. 
\end{proof}

\begin{remark}\label{remarkwer}
{\rm  (a)} In Example \ref{example3} the localized entropy function $\cH|_F$ is not analytic on the relative interior of the face $F$.  However, it is fairly straight-forward to show that  $\cH|_F$ is  a $C^1$-map. In particular, this statement follows from Theorem \ref{thmpwanalytic}.\\
{\rm (b)}  Let $\Phi$  be as in Example  \ref{example3}  and let  $\alpha=(0,-1)$. It is a consequence of Theorem \ref{KW} that  the zero-temperature measure $\mu_{\infty,\alpha\cdot\Phi}$ of the function $\alpha\cdot \Phi$ is a convex combination of the the measures $\mu_1$ and $\mu_2$. Using a symmetry argument similar to that  in 
\cite{KW4}, one can show that $\mu_{\infty,\alpha\cdot \Phi}=\frac{1}{2}(\mu_1+\mu_2)$. In particular, $\mu_{\infty,\alpha\cdot \Phi}$ is a non-ergodic zero-temperature measure
with positive entropy.
\end{remark}


The next example establishes the possibility of a line segment face $F$ with a relative interior point at which $\cH$ is not differentiable. 
\begin{example}\label{example44}Let $f:X\to X$ be a transitive subshift of finite type. Moreover, let $\Phi\in LC_k(X,\mathbb{R}^2)$ such that $\R(\Phi)$ has a
line segment face $F=[w_1,w_3]$ such that $X_F$ decomposes into $X_F=X_1\cup X_2 \cup X_3$ (see Equations \eqref{perface} and \eqref{sumtrans}) with the following properties: {\rm (a)} $\R(\Phi|_{X_i})=\{w_i\}$ for $i=1,3$ and $\R(\Phi|_{X_2})=\{w_2\}$ where $w_2\in(w_1,w_3)$; {\rm (b)} $\htop(f|_{X_2})>\max \{\htop(f|_{X_1}),\htop(f|_{X_3})\}$.\\
It now follows from Proposition \ref{concenv} that the graph of $\cH|_F$ is given by the line segments joining $(w_1, \htop(f|_{X_1}))$ with 
$(w_2, \htop(f|_{X_2}))$, and $(w_2, \htop(f|_{X_2}))$ with
$(w_3, \htop(f|_{X_3}))$ respectively. In particular, $\cH|_F$ is not differentiable at $w_2$.
\end{example}
We note that a function $\Phi$  satisfying the conditions in Example \ref{example44} can be obtained by slightly modifying the construction in
Example \ref{example3}. We leave the details to the reader.

\end{document}